\DeclareMathAlphabet{\mathpzc}{OT1}{pzc}{m}{it}
\newtheorem{theorem}{Theorem}
\newtheorem{lemma}[theorem]{Lemma}
\newtheorem{definition}[theorem]{Definition}
\newtheorem{proposition}[theorem]{Proposition}
\theoremstyle{remark}
\theoremstyle{definition}
\numberwithin{theorem}{section}
\numberwithin{equation}{section}
\newcommand{\C}{\mathbb{C}}
\newcommand{\R}{\mathbb{R}}
\newcommand{\Q}{\mathbb{Q}}
\newcommand{\Z}{\mathbb{Z}}
\newcommand{\N}{\mathbb{N}}
\newcommand{\Mat}{\mathrm{Mat}}
\newcommand{\Lie}{\mathrm{Lie}}
\newcommand{\G}{\mathbf{G}} % alg. group G
\newcommand{\GL}{\mathrm{GL}}
\newcommand{\SL}{\mathrm{SL}}
\newcommand{\wholenorm}[1]{\mathbf{N}_{#1}}
\newcommand{\id}{\mathrm{id}}
\newcommand{\height}{\mathrm{ht}} % height
\newcommand{\Ad}{\mathrm{Ad}}
\newcommand{\ad}{\mathrm{ad}}
\newcommand{\metric}{\mathrm{d}}
\newcommand{\norm}[1]{\|#1\|}
\newcommand{\abs}[1]{|{#1}|}
\newcommand{\Hcal}{\mathcal{H}}
\newcommand{\gfrak}{\mathfrak{g}}
\newcommand{\hfrak}{\mathfrak{h}}
\newcommand{\mfrak}{\mathfrak{m}}
\newcommand{\sfrak}{\mathfrak{s}}
\newcommand{\ufrak}{\mathfrak{u}}
\newcommand{\Abf}{\mathbf{A}}
\newcommand{\Cbf}{\mathbf{C}}
\newcommand{\Gbf}{\mathbf{G}}
\newcommand{\Lbf}{\mathbf{L}}
\newcommand{\Mbf}{\mathbf{M}}
\newcommand{\Nbf}{\mathbf{N}}
\newcommand{\Rbf}{\mathbf{R}}
\newcommand{\Tbf}{\mathbf{T}}
\newcommand{\Ubf}{\mathbf{U}}
\def\vpz{\mathpzc{v}}
\def\upz{\mathpzc{u}}
\def\wpz{\mathpzc{w}}
\def\zpz{\mathpzc{z}}
\newcommand\rquot[2]{
  \mathchoice
  {% \displaystyle
    \text{\raise0.5ex\hbox{$#1$}\big/\lower0.5ex\hbox{$#2$}}%
  }
  {% \textstyle
    #1\,/\,#2
  }
  {% \scriptstyle
    #1\,/\,#2
  }
  {% \scriptscriptstyle
    #1\,/\,#2
  }
}
\newcommand\lquot[2]{
  \mathchoice
  {% \displaystyle
    \text{\lower0.5ex\hbox{$#1$}\big\backslash\raise0.5ex\hbox{$#2$}}%
  }
  {% \textstyle
    #1\,\backslash\,#2
  }
  {% \scriptstyle
    #1\,\backslash\,#2
  }
  {% \scriptscriptstyle
    #1\,\backslash\,#2
  }
}
\newcommand\lrquot[3]{
  \mathchoice
  {% \displaystyle
    \text{\lower0.5ex\hbox{$#1$}\big\backslash\raise0.5ex\hbox{$#2$}\big/
      \lower0.5ex\hbox{$#3$}}%
  }
  {% \textstyle
    #1\,\backslash\,#2\,/\,#3
  }
  {% \scriptstyle
    #1\,\backslash\,#2\,/\,#3
  }
  {% \scriptscriptstyle
    #1\,\backslash\,#2\,/\,#3
  }
}
\newcounter{consta}
\renewcommand{\theconsta}{{A_{\arabic{consta}}}}
\newcommand{\consta}{\refstepcounter{consta}{\color{red}\theconsta}}
\newcounter{constk}
\newcounter{constc}
\renewcommand{\theconstc}{{c_{\arabic{constc}}}}
\newcommand{\constc}{\refstepcounter{constc}{\color{red}\theconstc}}
\newcounter{constE}
\renewcommand{\theconstE}{{{C}_{\arabic{constE}}}}
\newcommand{\constE}{\refstepcounter{constE}{\color{red}\theconstE}}
\newcounter{constd}
\newcommand{\be}{\begin{equation}}
\newcommand{\ee}{\end{equation}}
\newcommand{\euler}{\mathrm{e}}
\title{An effective closing lemma for unipotent flows}
\author[E.~Lindenstrauss]{E.~Lindenstrauss}
\address{(E.L.) The Einstein Institute of Mathematics, Edmund J.~Safra Campus, Givat Ram, The Hebrew University of Jerusalem, Jerusalem 91904, Israel}
\email{elon.bl@mail.huji.ac.il}
\author[G.~Margulis]{G.~Margulis}
\address{(G.M.) Mathematics Department, Yale University, New Haven, CT 06511}
\email{gregorii.margulis@yale.edu}
\author[A.~Mohammadi]{A.~Mohammadi}
\address{(A.M.) Department of Mathematics, University of California, San Diego, CA 92093}
\email{ammohammadi@ucsd.edu}
\thanks{A.M.\ acknowledges support by the NSF grants DMS-2055122 and 2350028.}
\author[N.~Shah]{N.A.~Shah}
\address{(N.S.) Department of Mathematics, The Ohio State University, Columbus, OH 43210}
\email{shah@math.osu.edu}
\author[A.~Wieser]{A.~Wieser}
\address{(A.W.) The Einstein Institute of Mathematics, Edmund J.~Safra Campus, Givat Ram, The Hebrew University of Jerusalem, Jerusalem 91904, Israel}
\email{andreas.wieser@mail.huji.ac.il}
\thanks{
E.~L.~and A.~W.~acknowledge the support of ERC 2020 grant HomDyn (grant no.~833423).
}
\renewcommand{\today}{\ifcase \month \or January\or February\or March\or %
April\or May \or June\or July\or August\or September\or October\or November\or %
December\fi, \number \year} 
\date{\today}
\begin{document}

\begin{abstract}
We prove an effective closing lemma for unipotent flows on quotients of perfect real groups.
This is largely motivated by recent developments in effective unipotent dynamics.
\end{abstract}

\maketitle

\setcounter{tocdepth}{1}
\tableofcontents

\section{Introduction}

An important and active research direction in homogeneous dynamics is the quantitative theory of unipotent flows, in particular their equidistribution properties.
The starting point of almost any approach to quantitative equidistribution or density statements is a closing lemma: consider the trajectory of a one-parameter unipotent subgroup $u_t.x$ in a quotient space $G/\Gamma$. Suppose for some (or many) $t_i$ which are well separated we have that the $u_{t_i}.x$ are close to each other. In what sense and under what conditions can we say that $u_{t_i}.x$ are in general position with respect to one another? And if not, what does this tell us about the original point~$x$?

Such issues played a central role already in the pioneering works of G.A. Margulis (G.M.) and of Dani and G.M.\ on the Oppenheim Conjecture (see e.g.~\cite{DM-Oppenheim}) and in more quantitative forms e.g.\ in the works by Einsiedler, Venkatesh, and G.M. in \cite{EMV} and E.L. and G.M. in \cite{LindenstraussMargulis}. More recently, such a closing lemma was used by two of us (E.L.\ and A.M.) in \cite{LM-PolyDensity} and with Zhiren Wang in \cite{LMW22}. Similar statements were also used by Lei Yang in \cite{Lei-SL3}.

The present article seems to us to be of independent interest, but the main impetus to its writing is that it plays an important role in a forthcoming work of A.W.~\cite{AW-realsemisimple}. Loosely, the main result of \cite{AW-realsemisimple} says that a sequence of periodic orbits of a semisimple group (say $H$), inside a quotient $\lquot{\Gamma}{G}$, becomes effectively equidistributed as the volume of the periodic orbit tends to infinity unless there is an obvious obstruction to this. 
The result of \cite{AW-realsemisimple} gives an effective version of (a special case of) a qualitative result by Mozes and N.S.~\cite{Mozes-Shah}. It extends the work of Einsiedler, G.M., and Venkatesh~\cite{EMV}, where a similar result was shown but under the additional requirement that the centralizer of $H$ in $G$ is finite.  
As in~\cite{EMV}, the equidistribution result in \cite{AW-realsemisimple} requires that $G$ be a semisimple algebraic group defined over $\Q$, and $\Gamma$ a \emph{congruence} lattice as it relies on the deep spectral theory available for such quotient spaces $\lquot{\Gamma}{G}$, and more importantly their homogeneous subspaces.

Originally, a closing lemma similar to the one we present here was an ingredient in a yet unpublished work of four of us (the four first-named authors), giving an effective density result for one-parameter unipotent flows (albeit with very poor rates) on arithmetic quotients; another ingredient of that project was an effective avoidance principle that appeared separately (and also seems to us to be of independent interest) by the first four named authors in \cite{LMMS}. We also use the results of \cite{LMMS} in this paper.
For simplicity of the exposition, we limit ourselves to quotients of real groups.

\medskip

Let $\G<\SL_N$ be a perfect $\Q$-group, let $G = \G(\R)$ be its group of real points, and let $\gfrak=\Lie(G)$ be its Lie algebra. Let $\Gamma< G \cap \SL_N(\Z)$ be a lattice in $G$, and let $X = \lquot{\Gamma}{G}$.
Finally, we fix a one-parameter unipotent subgroup $U = \{u_t: t \in \R\}$ of $G$ and consider the action of this group on $X$.

\medskip

At a crucial point in the argument in \cite{AW-realsemisimple} (as well as in the unpublished work of the first four authors mentioned above), one aims to understand whether or not a unipotent orbit can spend a lot of time close to a local orbit of some Lie subgroup of $G$, say with corresponding Lie algebra $\hfrak < \gfrak$.
Note that no further information on the $\hfrak$ is provided; in particular, as opposed to the setting in   \cite{EMV}, the Lie algebra $\hfrak$ might not be semisimple.

The following is a slightly more precise formulation of the question we consider here.
Suppose that $x \in X$ is a point and $T>0$ is a time parameter such that there is a subset $\mathcal{E} \subset [0,T]$ of sufficiently large measure so that for all $s,t \in \mathcal{E}$
\begin{align}\label{eq:question}
xu_s = xu_t g_{st}
\end{align}
where $g_{st} \in G$ is a displacement of bounded (or at least controlled) size which normalizes the given Lie subalgebra $\hfrak<\gfrak$ up to an error of, say, $T^{-1}$.
Of course, if $\hfrak$ is a Lie ideal (or very close to a Lie ideal), this scenario yields no information on the initial point $x$.
Otherwise, we wish to say that $x$ (and in fact the whole unipotent orbit for time $T$) remains close to a periodic orbit of a proper subgroup of $G$ of `low complexity'.

\medskip

We turn to an exact formulation of our main theorems.
As in \cite{LMMS}, we define $\mathcal{H}$ to be the (countable) family of connected $\Q$-subgroups of $\SL_N$ whose radical is unipotent.
We say that $\Mbf < \SL_N$ is of class $\mathcal{H}$ if $\Mbf \in \mathcal{H}$.
For any $\Mbf \in \mathcal{H}$ we put $M = \Mbf(\R)$ and also write $M \in \mathcal{H}$.
We assume throughout the article that $\G \in \mathcal{H}$;
in fact, in the statement of the main theorem, we assume $\G$ is perfect (i.e.~$\G = [\G,\G]$), though this is not used in much of the argument.

We fix once and for all a Euclidean norm $\norm{\cdot}$ on $\Mat_N(\R)$, which induces a norm on $\mathfrak{sl}_N(\R)$ and on $\SL_N(\R)$.
We write  $\norm{\cdot}$ also for the induced norms on exterior products of $\mathfrak{sl}_N(\R)$. For $g \in \SL_N(\R)$ we let
\begin{align*}
\abs{g} = \max\{\norm{g},\norm{g^{-1}}\}.
\end{align*}
For $\tau >0$ let
\begin{align*}
X_{\tau} = \{\Gamma g \in X: \min_{0 \neq \vpz \in \gfrak(\Z)} \norm{\Ad(g)^{-1}\vpz} \geq \tau\}.
\end{align*}
These are compact subsets of $X$, and any compact subset of $X$ is contained in $X_\tau$ for some $\tau>0$.

Let $\gfrak(\Z) = \gfrak \cap \mathfrak{sl}_N(\Z)$, and let $\mathrm{rad}(\gfrak)$ denote the radical of $\gfrak$.
Recall that $U$ is a one-parameter unipotent subgroup of $G$; we write $U = \{u_t = \exp(t\zpz):t \in \R\}$ for a nilpotent unit vector $\zpz \in \gfrak$.

Let $\Mbf \in \mathcal{H}$ be a non-trivial proper subgroup of $\G$ and define 
\begin{align*}
V_M = \wedge^{\dim(M)} \gfrak,\quad \rho_M = \wedge^{\dim(M)} \Ad.
\end{align*}
The representation $\rho_M$ is defined over $\Q$ and the lattice $\bigwedge^{\dim(M)} \gfrak(\Z)$ is $\Gamma$-invariant.
For simplicity, we often write $g.\vpz = \rho_M(g)\vpz$ for the action of $g \in G$ on $\vpz \in V_M$.
Let $\vpz_M \in V_M$ be a primitive integral vector corresponding to the Lie algebra of $M$; that is, $\vpz_M$ is one the two shortest non-zero integral vectors in the line $\wedge^{\dim(M)} \Lie(M)$. So, for any $\gamma\in\Gamma$, $\gamma.\vpz_M=\pm \vpz_{\gamma M \gamma^{-1}}$. 
We write $g\in G \mapsto \eta_M(g) = \rho_M(g^{-1})\vpz_M$ for the (right-)orbit map at $\vpz_M$.
The height of $\Mbf$ is defined to be $\height(\Mbf) = \norm{\vpz_M}$.

For any Lie subalgebra $\hfrak < \gfrak$ (not necessarily defined over $\Q$) we write $\hat{\vpz}_{\hfrak}$ for the point it defines in the projective space $\mathbb{P}(\wedge^{\dim(\hfrak)}\gfrak)$.
For any $0 < r \leq \dim(\gfrak)$ we equip $\mathbb{P}(\wedge^r \gfrak)$ with the Fubini-Study metric $\metric(\cdot,\cdot)$ where the distance
$\metric(\hat{\vpz},\hat{\wpz})$ of two points $\hat{\vpz},\hat{\wpz}\in \mathbb{P}(\wedge^r \gfrak)$ is the angle between the corresponding lines in $\wedge^r \gfrak$. 
If the lines $\hat{\vpz}$ and $\hat{\wpz}$ are represented by pure wedges $v_1\wedge \ldots \wedge v_r$ and $w_1\wedge \ldots \wedge w_r$, respectively, for orthonormal collections of vectors $v_1,\ldots, v_r$ and $w_1,\ldots, w_r$, then 
\begin{align} \label{eq:Fubini-Study}
\begin{split}
\sup_{1\leq i\leq r} \norm{v_i\wedge w_1\wedge\cdots \wedge w_r} & \ll d(\vpz,\wpz)
\ll \sup_{1\leq i\leq r} \norm{v_i\wedge w_1\wedge\cdots \wedge w_r}.
\end{split}
\end{align}

\begin{theorem}\label{thm:main-oneparameter}
Suppose that $\G$ is perfect.
There exist constants $\consta\label{a:mainonep},\consta\label{a:mainonep2}>1$ depending only on $N$, and $E>0$ depending on $N,G,\Gamma$ with the following property.
Let $\hfrak < \gfrak$ be a (proper) subalgebra so that $\hfrak +\operatorname{rad}(\mathfrak g)$ is not a proper Lie ideal of~$\gfrak$. 
Let $\tau\in (0,1)$, $T>0$, and $R>0$ with $T>R>E\tau^{-\ref{a:mainonep}}$.
Let $x = \Gamma g \in X_{\tau}$ be a point.

Suppose that there exists a measurable subset $\mathcal{E} \subset [-T,T]$ with the following properties:
\begin{enumerate}[(a)]
\item $|\mathcal{E}|>TR^{-1/\ref{a:mainonep}}$.
\item For any $s,t\in \mathcal{E}$ there exists $\gamma_{st} \in \Gamma$ with
\begin{gather*}
\norm{u_{-s}g^{-1}\gamma_{st}gu_t}\leq R^{1/\ref{a:mainonep}},\\
\metric\big(u_{-s}g^{-1}\gamma_{st}gu_t. \hat{\vpz}_\hfrak, \hat{\vpz}_\hfrak\big)\leq R^{-1}.
\end{gather*}
\end{enumerate}
Then one of the following is true:
\begin{enumerate}
\item There exists a nontrivial proper subgroup $\Mbf\in\Hcal$ 
so that the following hold for all $t \in [-T,T]$:
\begin{align*}
\norm{\eta_{M}(gu_t)}&\leq R^{\ref{a:mainonep2}},\\
\norm{\zpz\wedge{\eta_{M}(gu_t)}}&\leq T^{-1/\ref{a:mainonep2}}R^{\ref{a:mainonep2}}.
\end{align*}
\item There exist a nontrivial proper normal subgroup $\Mbf\lhd \Gbf$ of class $\Hcal$ containing the radical of $\G$ with
\begin{align*}
\norm{\zpz\wedge \vpz_M}&\leq R^{-1/\ref{a:mainonep2}}.
\end{align*}
\end{enumerate}
\end{theorem}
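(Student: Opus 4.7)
The plan is to feed the hypothesis into the effective avoidance principle of \cite{LMMS}. Let $\rpz=\dim\hfrak$ and let $\rho=\wedge^{\rpz}\Ad$ act on $W=\wedge^{\rpz}\gfrak$, so that $\hat{\vpz}_\hfrak\in\mathbb{P}(W)$ is the line determined by $\hfrak$. Setting $v_{st}=u_{-s}g^{-1}\gamma_{st}gu_t$, the hypothesis reads: for every pair $(s,t)\in\mathcal{E}^2$ one has $\|v_{st}\|\leq R^{1/\ref{a:mainonep}}$ and $\rho(v_{st})$ moves $\hat{\vpz}_\hfrak$ by angle at most $R^{-1}$. Equivalently, $g^{-1}\gamma_{st}g$ maps the point $u_t.\hat{\vpz}_\hfrak$ to within angle $R^{-1}$ of $u_s.\hat{\vpz}_\hfrak$, so the curve $t\mapsto u_t.\hat{\vpz}_\hfrak$ in $\mathbb{P}(W)$ carries many approximate self-symmetries by $\Gamma$-conjugation.

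The first preparatory step is to gain quantitative control on the $\gamma_{st}$. Since $\|v_{st}\|$ is polynomially bounded in $R$ and $\|u_t\|$ grows polynomially in $|t|\leq T$, one obtains $\|\gamma_{st}\|\leq(TR)^{O(1)}$ with implied constants depending on $N$ and on the basepoint $g$. The $\gamma_{st}$ therefore lie in $\Gamma$ intersected with a ball of controlled radius, a set of polynomially bounded cardinality. The role of the assumption that $\hfrak+\operatorname{rad}(\gfrak)$ is not a proper Lie ideal of $\gfrak$ is to rule out the trivial situation where $\hat{\vpz}_\hfrak$ is essentially $G$-invariant and the hypothesis carries no information; the perfectness of $\G$ will be used at the very end to propagate information about the semisimple quotient back to $\G$.

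The main step is to combine the large measure of $\mathcal{E}$ with the polynomial structure of $U$-orbits to extract a structured subfamily of returns. Concretely, I would fix a base time $s_0\in\mathcal{E}$, study the map $t\mapsto\gamma_{s_0t}\in\Gamma$, and pigeonhole on a ball of polynomial radius in $\Gamma$ to obtain a large subset of $\mathcal{E}$ whose returns cluster along a single algebraic family in $G$ defined by the approximate stabilization of $\hat{\vpz}_\hfrak$. The polynomial nature of $t\mapsto u_t.\hat{\vpz}_\hfrak$ then allows one to promote finitely many coincidence relations to a statement along the whole interval $[-T,T]$. These returns place the orbit $\{\Gamma gu_t:t\in[-T,T]\}$ in effective proximity to a $\Gamma$-invariant subvariety in $X$ of the type handled by the avoidance principle of \cite{LMMS}; applying it as a black box yields the dichotomy between Cases (1) and (2): either the orbit is shadowed by the periodic orbit of some $\Mbf\in\Hcal$, or the configuration degenerates along a normal direction.

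Unpacking the two outcomes gives the stated conclusions. In the first case, the avoidance principle supplies the proper $\Mbf\in\Hcal$ and polynomial estimates on $\eta_M$ along the orbit translate into the bounds on $\|\eta_M(gu_t)\|$ and $\|\zpz\wedge\eta_M(gu_t)\|$. In the degenerate case, the structure of $\hfrak$ together with perfectness of $\G$ forces the invariant vector produced by \cite{LMMS} to correspond to a proper normal subgroup of $\Gbf$ of class $\Hcal$ containing $\operatorname{rad}(\G)$, and approximate $u_t$-invariance upgrades to $\|\zpz\wedge\vpz_M\|\leq R^{-1/\ref{a:mainonep2}}$. I expect the main obstacle to lie in the quantitative bookkeeping of the pigeonhole step: converting a polynomial-in-$T$ cardinality bound and the $R^{-1}$ angular accuracy into input parameters of \cite{LMMS} that yield constants $\ref{a:mainonep}$ and $\ref{a:mainonep2}$ depending only on $N$, while also handling the passage from the semisimple quotient back to $\G$ in the degenerate alternative.
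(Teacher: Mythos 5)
Your proposal correctly identifies the ambient machinery (the effective avoidance principle of \cite{LMMS}) and the qualitative roles of the two structural hypotheses, but the step you describe as "pigeonhole on a ball of polynomial radius in $\Gamma$ to obtain a large subset of $\mathcal{E}$ whose returns cluster along a single algebraic family" is precisely where the entire content of the theorem lives, and as stated it does not produce the input that \cite{LMMS} needs. The main theorem of \cite{LMMS} is a trichotomy whose first alternative says that the set of non-Diophantine points is \emph{small}; to force one of the other two alternatives you must exhibit, for a set of times of measure $\gg TR^{-\star}$, an explicit proper subgroup $\Lbf\in\Hcal$ of \emph{controlled height} whose orbit vector $\eta_L(gu_t)$ is small and nearly annihilated by $\zpz$. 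Your sketch never constructs such a subgroup. The paper does this by (i) first running an effective non-divergence argument to recenter at a density point $w$ of $\mathcal{E}$ lying deep in a fixed compact part, so that the representative $g_0$ there has norm $\ll R^{\star\kappa}$; (ii) taking the Zariski closure $\Mbf_i$ of the group generated by the return elements $\gamma_u$ for $u$ within distance $\euler^{\ell_i}\approx R^{\theta^{\star}}$ of $w$ -- crucially only \emph{nearby} returns, so that $\norm{\gamma_u}\ll R^{\star\theta}$ rather than your bound $(TR)^{O(1)}$, which would destroy the height control needed for the conclusion to be polynomial in $R$ rather than $T$; (iii) proving $\Mbf_i$ is \emph{proper} via an effective {\L}ojasiewicz inequality: if the $\gamma_u$ were Zariski-dense in $\G$, then $\hfrak$ would be almost invariant under small generators, hence close to a Lie ideal, hence (by separation of ideals in the semisimple quotient and perfectness) equal to $\gfrak$ -- this is the only place the hypothesis on $\hfrak+\operatorname{rad}(\gfrak)$ is used, and it requires the quantitative Chevalley/height results of the paper's Section 2 and the Solern\'o inequality of Section 3, neither of which appears in your outline; (iv) showing $\Mbf_i^{\Hcal}$ is nontrivial by a lattice-point count that rules out tori; and (v) a multiscale pigeonhole over $\dim(\G)+1$ scales to stabilize $\Mbf_i^{\circ}$, followed by a case split according to whether the resulting group is normal in $\G$ (passing to the normalizer in the non-normal case, and to a Chevalley representation plus a torus argument in the normal case) to manufacture the non-Diophantine certificate.

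A secondary but real issue is your norm bound $\norm{\gamma_{st}}\leq (TR)^{O(1)}$ with constants depending on the basepoint $g$: since the conclusion's exponents must depend only on $N$ and the bounds in Option~(1) are powers of $R$ (with $T$ appearing only favorably), any group built from elements of norm polynomial in $T$ or with uncontrolled dependence on $g$ cannot yield the stated estimates. The recentering via non-divergence and the restriction to short-range returns are not bookkeeping conveniences but are what make the height of the constructed subgroup, and hence the final exponents, depend only on $R$ and $N$.
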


We also obtain an analogous version for multidimensional unipotent groups.
We follow the setup of \cite[\S2.9]{LMMS}.
Let $U < G$ be a unipotent subgroup and let $\ufrak$ be its Lie algebra.
We fix a basis $\mathcal{B}_U$ of $\ufrak$ consisting of unit vectors 
and set $B_\ufrak(0,\delta) = \{\sum_{\zpz \in \mathcal{B}_U} a_\zpz \zpz: |a_{\zpz}| \leq \delta\}$ for $\delta>0$ as well as $B_U(e) = \exp(B_\ufrak(0,1))$.

Let $\lambda\colon \ufrak \to \ufrak$ be an $\R$-diagonalizable expanding linear map (all eigenvalues have absolute value $>1$).
For any $k\in\Z$ and any $u = \exp(\zpz)\in U$, we set $\lambda_k(u) = \exp(\lambda^k(\zpz))$. We note that $\lambda_k\circ \lambda_\ell=\lambda_{k+\ell}$. 
We shall assume that there exists $k_0 \in \N$ such that for every integer $k>k_0$,
\begin{align}\label{eq:contpropexpmap}
\exp\big(\lambda_{k-k_0}(B_\ufrak(0,1))\big)\exp\big(\lambda_{k-1}(B_\ufrak(0,1))\big)
\subset \exp\big(\lambda_{k}(B_\ufrak(0,1))\big).
\end{align}
Since the exponential map $\exp:\ufrak\to U$ pushes the Lebesgue measure on $\ufrak$ to a Haar measure, denoted by $|\cdot|$, on $U$, for any measurable $B\subset U$ and $k\in\Z$, 
\begin{align} \label{eq:lambda-scale}
    |\lambda_k(B)|=|\det(\lambda)|^k|B|.
\end{align}

The expanding map $\lambda$ could, for instance, be given by an expanding automorphism of $\ufrak$.
Another example is given by expanding the different partial quotients in the lower central series of $\ufrak$ with suitable rates; see \cite[\S2.9]{LMMS}.

To avoid cumbersome statements, we suppose throughout that any constant that is allowed to depend on $N$ and $\height(\G)$ is also (implicitly) allowed to depend on
$\norm{\lambda}$, $\norm{\lambda^{-1}}$, $\frac{|\lambda_1(B_U(e))|}{|B_U(e)|}=|\det(\lambda)|$, and $k_0$.

\medskip

The following is our main theorem in this more general setting.

\begin{theorem}\label{thm:main}
Suppose that $\G$ is perfect.
There exist constants $\consta\label{a:main},\consta\label{a:main2}>1$ depending only on $N$, and $E>0$ depending on $N,G,\Gamma$ with the following property.
Let $\hfrak < \gfrak$ be a (proper) subalgebra so that $\hfrak +\operatorname{rad}(\mathfrak g)$ is not a proper Lie ideal of~$\gfrak$. 
Let $\tau\in (0,1)$, $k \in \N$, and $R>0$ with $\euler^k>R>E\tau^{-\ref{a:main}}$.
Let $x = \Gamma g \in X_{\tau}$ be a point.

Suppose that there exists a measurable subset $\mathcal{E} \subset B_U(e)$ with the following properties:
\begin{enumerate}[(a)]
\item $|\mathcal{E}|>R^{-1/\ref{a:main}}$.
\item For any $u,u'\in \mathcal{E}$ there exists $\gamma \in \Gamma$ with
\begin{gather*}
\norm{\lambda_k(u)^{-1}g^{-1}\gamma g\lambda_k(u')}\leq R^{1/\ref{a:main}},\\
\metric\big(\lambda_k(u)^{-1}g^{-1}\gamma g\lambda_k(u'). \hat{\vpz}_\hfrak, \hat{\vpz}_\hfrak\big)\leq R^{-1}.
\end{gather*}
\end{enumerate}
Then one of the following is true:
\begin{enumerate}[(1)]
\item\label{item:main-option1} There exist a nontrivial proper subgroup $\Mbf\in\Hcal$ 
so that the following hold for all $u \in B_U(e)$:
\begin{equation}\label{eq:main-option1}
\begin{split}
\norm{\eta_{M}(g\lambda_k(u))}&\leq R^{\ref{a:main2}},\\
\max_{\zpz \in \mathcal{B}_U}\norm{\zpz\wedge{\eta_{M}(g\lambda_k(u))}}&\leq \euler^{-k/\ref{a:main2}}R^{\ref{a:main2}}.
\end{split}
\end{equation}
\item\label{item:main-option2} There exist a nontrivial proper normal subgroup $\Mbf\in\Hcal$ containing the radical of $\G$ with
\begin{align*}
\max_{\zpz \in \mathcal{B}_U}\norm{\zpz\wedge \vpz_M}&\leq R^{-1/\ref{a:main2}}.
\end{align*}
\end{enumerate}
\end{theorem}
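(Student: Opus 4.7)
The plan is to combine a fiber-wise analysis of the approximate returns with a polynomial extrapolation and the effective avoidance principle of \cite{LMMS}. Roughly, the many near-returns given by hypothesis~(b) force the unipotent orbit to concentrate near a proper subvariety of controlled height; the avoidance principle then extracts a class-$\Hcal$ subgroup $\Mbf$. Option~\ref{item:main-option1} corresponds to $\Mbf$ being a proper subgroup of $\Gbf$; Option~\ref{item:main-option2} is forced when no proper $\Mbf$ arises, using the assumption that $\hfrak + \mathrm{rad}(\gfrak)$ is not a proper ideal.

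Write $y_u := g\lambda_k(u)$ and $F(u) := y_u.\hat{\vpz}_\hfrak \in \mathbb{P}(\wedge^{\dim \hfrak}\gfrak)$. Hypothesis~(b) translates to: for $u, u' \in \Ecal$ there is $\gamma_{uu'} \in \Gamma$ with $\norm{y_u^{-1}\gamma_{uu'}y_{u'}} \leq R^{1/\ref{a:main}}$, and $F(u)$ within $R^{-1}$ of $\gamma_{uu'}.F(u')$. In particular, for a fixed base-point $u_0 \in \Ecal$ the images $F(u)$, $u \in \Ecal$, all lie within $R^{-1}$ of the $\Gamma$-orbit of $F(u_0)$. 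A pigeonhole applied to the \emph{bounded displacement} $y_u^{-1}\gamma_{uu_0}y_{u_0}$ (which lies in a ball of radius $R^{O(1)}$ in $G$ and hence in a set coverable by $R^{O(1)}$ balls of unit radius) produces a subset $\Ecal' \subset \Ecal$ of measure $|\Ecal'|\geq R^{-O(1)}$ on which a single return element $\gamma \in \Gamma$ works; thus $F(u)$ is within $R^{-1}$ of $w := \gamma.F(u_0)$ for every $u \in \Ecal'$.

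The lifted orbit map $\tilde F : u \mapsto y_u.\vpz_\hfrak$ is polynomial in $u$ of bounded degree. A polynomial extrapolation (Remez-type) argument, analogous to ones developed in \cite{LindenstraussMargulis, LMMS}, promotes the $R^{-1}$-closeness of $F$ to $w$ on $\Ecal'$ (of measure $\geq R^{-O(1)}$) into closeness of order $\euler^{-k/O(1)}R^{O(1)}$ holding uniformly for all $u \in B_U(e)$ --- this is where the assumption $\euler^k > R$ is used, ensuring the polynomial has enough scale-separation to spread the information across the whole ball. Consequently the full orbit segment $\{x\lambda_k(u) : u \in B_U(e)\}$ concentrates near a $\Gamma$-translate of the fiber of the $G$-orbit map at $\hat{\vpz}_\hfrak$ over $w$. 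Although $w$ is a priori irrational (it depends on the irrational $\hfrak$), shifting back by the integral $\gamma \in \Gamma$ converts this into a constraint of bounded height, which is the precise input of the effective avoidance principle of \cite{LMMS} in its multidimensional form (Section~2.9 there).

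The avoidance principle then outputs a subgroup $\Mbf \in \Hcal$ of $\Gbf$ together with the bounds in \eqref{eq:main-option1}. If $\Mbf$ is proper, we are in Option~\ref{item:main-option1}. Otherwise $\Mbf = \Gbf$, the avoidance output is vacuous, and one argues instead that the approximate stabilization structure is consistent only if the basis vectors of $\ufrak$ lie near a proper normal $\Q$-subalgebra of class $\Hcal$ containing the radical of $\G$; this yields Option~\ref{item:main-option2}. The latter step uses that $\G$ is perfect (ruling out abelian quotients and hence constraining the possible normal $\Hcal$-subgroups) together with the hypothesis that $\hfrak + \mathrm{rad}(\gfrak)$ is not a proper ideal. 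The principal obstacle I anticipate is the calibration of the avoidance principle's input: since $\hfrak$ itself is a priori irrational, the irrationality must be carefully passed through $\gamma \in \Gamma$ to produce a rational subvariety of bounded height, all while tracking the dependence of constants on $N$, $\tau$, and the data of the expanding map $\lambda$.
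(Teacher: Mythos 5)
There is a genuine gap at the center of your argument, namely in the passage from the near-returns to ``a constraint of bounded height'' that you feed into the avoidance principle. After your pigeonhole you have a single $\gamma\in\Gamma$ and the statement that $F(u)$ lies within $R^{-1}$ of $w=\gamma.F(u_0)=\gamma y_{u_0}.\hat{\vpz}_\hfrak$ for all $u$ in a large set. The fiber of the orbit map over $w$ is a coset of the normalizer of $\hfrak$ in $G$, and both $\hfrak$ and $y_{u_0}$ are irrational; conjugating by the single integral element $\gamma$ does not rationalize anything, and there is no variety of bounded height here to hand to \cite{LMMS}. Retaining only one return element discards exactly the information that makes the theorem work. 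The paper instead keeps \emph{all} the lattice elements $\gamma_u$ arising from hypothesis~(b) at a hierarchy of scales $\ell_0<\ell_1<\dots<\ell_{\dim\G+1}$, forms the Zariski closures $\Mbf_i$ of the groups they generate, and controls $\height(\Mbf_i^{\Hcal})$ by Proposition~\ref{prop:heightifgenbylattice} (heights of $\Q$-groups generated by lattice elements of bounded size). That these groups are \emph{proper} in $\G$ is precisely where the hypothesis that $\hfrak+\mathrm{rad}(\gfrak)$ is not a proper ideal enters, via the effective {\L}ojasiewicz-type rigidity of Propositions~\ref{prop:nearbyideal} and~\ref{prop:nearbyideal2}: a Zariski-dense set of small lattice elements cannot almost-preserve such an $\hfrak$. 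Your proposal never uses this hypothesis in a checkable way, which is a strong sign the mechanism is missing. A pigeonhole on the increasing chain $\Mbf_0^\circ\subset\dots\subset\Mbf_{\dim\G}^\circ$ then produces a scale $i_0$ at which the group stabilizes, and it is this scale separation (not merely $\euler^k>R$) that powers the Remez extrapolation gaining $\euler^{-\star\ell_{i_0+1}}$ against $\euler^{\star\ell_{i_0}}$.

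Your treatment of the dichotomy is also too coarse. In the paper the case split is not ``$\Mbf$ proper versus $\Mbf=\G$'' after applying the avoidance principle, but ``$\Mbf$ normal in $\G$ or not'' for the group constructed from the $\gamma_u$, \emph{before} invoking \cite{LMMS}: if $\Mbf$ is not normal one passes to $\Nbf_\Mbf^{\Hcal}$ and lands in the Diophantine framework; if $\Mbf$ is normal one needs the Chevalley pair of Proposition~\ref{prop:chevalley}, lattice point counting in tori, and a Lagrange interpolation argument to show each $\zpz\in\Bcal_U$ is $\euler^{-\star\ell_{i_0+1}}$-close to $\Lie(\Mbf)$, which is what produces Option~(2). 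The sentence ``one argues instead that the approximate stabilization structure is consistent only if\dots'' is not a proof of this step. Finally, note that a non-divergence input (Theorem~6.3 of \cite{LMMS}) is needed at the outset to choose representatives $g_0$ of bounded size along the orbit; without it the implied constants in your extrapolation are not controlled.
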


It is worthwhile noting that the assumptions of the theorem and the conclusion in \eqref{eq:main-option1} only depend on the point $x$ and not the choice of its representative~$g$: The element $h_{u,u'}:=\lambda_k(u)^{-1}g^{-1}\gamma g\lambda_k(u')$ appearing in the assumption (b) is such that $x\lambda_k(u')=x\lambda_k(u)h_{u,u'}$; and the choice of the subgroup $\Mbf$ in Option~(1) of the conclusion depends on $g$, and replacing $g$ by $\gamma_1 g$ for some $\gamma_1\in\Gamma$ exchanges $\Mbf$ for $\gamma_1^{-1}\Mbf \gamma_1$.

This paper is structured as follows.
In \S\ref{sec:Chevalleyandmore}, we establish various facts pertaining to $\Q$-groups generated by lattice elements.
In \S\ref{sec:alm inv Liealgebra}, we show that a Lie algebra $\hfrak$ as in the main theorems cannot be `almost' invariant under a generating set of $G$ consisting of `small' lattice elements.
In \S\ref{sec:proofmain}, we finally prove Theorem~\ref{thm:main} crucially using the results from \cite{LMMS}.

\subsection*{Conventions}
Given $S,T>0$ we write $S \ll T$ when $S \leq c T$ for a constant $c>0$ depending only on $N$ and $S \ll_a T$ when $c>0$ is allowed to depend additionally on an object $a$.
It will also be useful to denote by $\star$ a constant placeholder allowed to depend only on $N$ so that $S \leq T^\star$ if there is $A=A(N)>0$ with $S \leq T^A$.
We use throughout the article constants {\color{red}$A_{\bullet}$} that are only allowed to depend on $N$ and are typically larger than $1$.

\section{Heights of groups generated by lattice elements}\label{sec:Chevalleyandmore}

\subsection{Chevalley representations for groups of class  $\mathcal{H}$}\label{sec:chevalleyclassH}
The following is a classical theorem of Chevalley (see for instance \cite[Ch.~II, Thm.~5.1]{Borelbook} or \cite[Prop.~2.4]{BorelHarishChandra}).

\begin{theorem}
Suppose that $\Mbf < \SL_N$ is a subgroup. Then there exists a rational representation $\rho: \SL_N \to \SL(W)$ and a non-zero vector $\wpz \in W$ such that
\begin{align*}
\Mbf = \{g \in \SL_N: \rho(g)\wpz \wedge \wpz = 0\}.
\end{align*}
If $\Mbf$ is defined over $\Q$, one can find $\rho$ and $\wpz$ also defined over $\Q$.
\end{theorem}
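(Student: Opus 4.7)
The plan is to realize $\Mbf$ as the line stabilizer of a vector built from polynomial functions vanishing on $\Mbf$. The key idea is the standard Chevalley construction: convert the defining ideal of $\Mbf$ into a finite-dimensional piece of an algebraic representation, then take a top exterior power to turn a subspace stabilizer into a line stabilizer. The equation $\rho(g)\wpz \wedge \wpz = 0$ is exactly the condition that $g$ preserves the line through $\wpz$.

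First I would set up the ambient representation. Let $I(\Mbf) \subset \C[\SL_N]$ be the vanishing ideal of $\Mbf$, which by Hilbert's basis theorem is finitely generated, say by $f_1,\ldots,f_m$. Consider the left regular representation $(L_g f)(x) = f(g^{-1}x)$ of $\SL_N$ on $\C[\SL_N]$. A standard fact for affine algebraic groups is that this representation is locally finite, so the $\SL_N$-span of $\{f_1,\ldots,f_m\}$ is contained in a finite-dimensional $\SL_N$-stable subspace $V \subset \C[\SL_N]$, and $L$ restricts to a rational representation on $V$.

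Next I would set $W_0 = V \cap I(\Mbf)$. For $m \in \Mbf$ one has $L_m\, I(\Mbf) = I(\Mbf)$, so $W_0$ is $\Mbf$-stable. Conversely, if $g \in \SL_N$ satisfies $L_g W_0 \subset W_0$, then every $L_g f_i \in W_0 \subset I(\Mbf)$, so evaluating at the identity gives $f_i(g^{-1}) = 0$ for each $i$; since the $f_i$ cut out $\Mbf$ as a variety and $\Mbf$ is a subgroup, this forces $g \in \Mbf$. Hence $\Mbf = \{g \in \SL_N : L_g W_0 = W_0\}$. Now pass to top exterior powers: let $r = \dim W_0$, pick a basis $v_1,\ldots,v_r$ of $W_0$, and set $W = \wedge^r V$, $\rho = \wedge^r L$, and $\wpz = v_1 \wedge \cdots \wedge v_r \in W$. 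Then $\rho(g)\wpz$ is a nonzero scalar multiple of $\wpz$ iff $L_g W_0 = W_0$ iff $g \in \Mbf$, and proportionality to $\wpz$ is captured precisely by $\rho(g)\wpz \wedge \wpz = 0$.

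For the $\Q$-rationality statement, I would carry out the same construction inside $\Q[\SL_N]$: use the $\Q$-ideal $I_\Q(\Mbf)$, which is finitely generated over $\Q$ by Hilbert's basis theorem; the analogue $V_\Q \subset \Q[\SL_N]$ exists because the left regular representation is defined and locally finite over $\Q$; and $W_{0,\Q} = V_\Q \cap I_\Q(\Mbf)$ yields a wedge vector $\wpz \in \wedge^r V_\Q$. Compatibility with base change, needed to identify $V_\Q \otimes \C$ with $V$ and $W_{0,\Q}\otimes \C$ with $W_0$, is automatic from $I(\Mbf) = I_\Q(\Mbf) \otimes_\Q \C$. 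There is no real obstacle here: the only nontrivial inputs are Hilbert's basis theorem and local finiteness of the regular representation of an affine algebraic group, both standard; the remainder is linear algebra.
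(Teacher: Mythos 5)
Your proof is correct and is precisely the classical Chevalley argument that the paper itself does not reprove but cites to \cite[Ch.~II, Thm.~5.1]{Borelbook} and \cite[Prop.~2.4]{BorelHarishChandra}: local finiteness of the regular representation, intersecting a finite-dimensional stable subspace with the vanishing ideal, and passing to the top exterior power to convert a subspace stabilizer into a line stabilizer. The only detail left implicit is that the resulting representation lands in $\SL(W)$ rather than just $\GL(W)$, which is automatic because $\SL_N$ admits no nontrivial characters.
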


In the following we shall call such a pair $(\rho,\wpz)$ a Chevalley pair for $\Mbf$; for such a pair we will call $\rho$ the Chevalley representation and $\wpz$ the Chevalley vector.
he following proposition allows us to control the height of the Chevalley vector $\wpz$ as well as the occurring representations $\rho$ when $\Mbf \in \mathcal{H}$.

\begin{proposition}[Chevalley representations and heights]\label{prop:chevalley}
There exists a finite collection $\mathcal{P}=\mathcal{P}(N) = \{\rho: \SL_N \to \SL_{\dim(\rho)}\}$ of integral\footnote{Here, we call a rational representation $\rho:\SL_{N}\to \SL_m$ integral if the coefficients of the polynomials defining $\rho$ are integers, and in particular $\rho(\SL_{N}(\Z)) \subset \SL_m(\Z)$.} representations of $\SL_{N}$
and constants $\consta\label{a:chevalley}>0$ and $\constc\label{c:chevalley}>1$ depending only on $N$ with the following property.

Let $\Mbf < \SL_{N}$ be a $\Q$-subgroup of class $\mathcal{H}$.
Then there exists $\rho\in \mathcal{P}$ and a vector $\wpz\in \Z^{\dim(\rho)}$  satisfying
\begin{align*}
\ref{c:chevalley}^{-1}\height(\Mbf)^{1/\ref{a:chevalley}} \leq \norm{\wpz} 
\leq \ref{c:chevalley}\height(\Mbf)^{\ref{a:chevalley}}
\end{align*}
so that $(\rho,\wpz)$ is Chevalley pair for $\Mbf$.
\end{proposition}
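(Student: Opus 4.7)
The plan is to follow the classical Chevalley embedding construction (cf.~\cite[Ch.~II, Thm.~5.1]{Borelbook}), adding quantitative control at each stage to obtain a finite list of target representations together with polynomial height bounds on the defining vector.

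The key step is to show that every $\Mbf \in \mathcal{H}$, regarded as a subvariety of $\SL_N$, is cut out by polynomial equations of degree bounded by some $d = d(N)$. This is where the class-$\mathcal{H}$ hypothesis is essential: the unipotent radical $\Rbf_u$ of $\Mbf$ lies in the unipotent variety of $\SL_N$ and is cut out in degree $\ll 1$; a Levi subgroup $\Lbf < \Mbf$ is semisimple of dimension at most $N^2$, and such semisimple subgroups of $\SL_N$ fall into finitely many conjugacy classes over $\R$ (by the classification of faithful representations of semisimple Lie algebras in dimension $\leq N$), so each such $\Lbf$ is also cut out in degree $\ll 1$. Combining these via the Levi decomposition $\Mbf = \Lbf \ltimes \Rbf_u$ yields the uniform bound.

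Granted this, set $W = \Q[\SL_N]_{\leq d}$, the polynomials of degree at most $d$ on $\SL_N$; this is a finite-dimensional integral representation of $\SL_N$ by right translation, of dimension $D \ll 1$. For each $\Mbf$ let $W_\Mbf = W \cap I(\Mbf)$ be the subspace of functions vanishing on $\Mbf$. Since $I(\Mbf)$ is generated by polynomials of degree $\leq d$, the standard argument gives $\Stab_{\SL_N}(W_\Mbf) = \Mbf$; equivalently, $\Mbf$ is the stabilizer of the line spanned by $\wpz_\Mbf := \bigwedge W_\Mbf \in \wedge^{\dim W_\Mbf} W$. Since $\dim W_\Mbf$ takes only finitely many values in $\{1,\dots,D-1\}$, the representations $\rho = \wedge^{\dim W_\Mbf} W$ form the desired finite integral list $\mathcal{P}(N)$.

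Finally, the height bounds reduce to polynomial-degree comparisons. A $\Q$-basis of $W_\Mbf$ may be chosen whose coefficients are polynomial of bounded degree in the Pl\"ucker coordinates $\vpz_M$ of $\mfrak$; clearing denominators gives $\wpz \in \Z^{\dim \rho}$ with $\norm{\wpz} \ll \height(\Mbf)^{\star}$. For the lower bound, $\mfrak$ is recovered from $\wpz$ as $\{X \in \mathfrak{sl}_N : X.\wpz \in \R\wpz\}$, a linear condition in $X$ whose coefficients are polynomial in $\wpz$; applying Cramer's rule and renormalizing to the primitive integral Pl\"ucker coordinate yields $\height(\Mbf) \ll \norm{\wpz}^{\star}$. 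The main obstacle is the first step: the uniform degree bound fails for general $\Q$-subgroups of $\SL_N$ (e.g., arithmetically complex tori), and preserving integrality of $\rho$ together with the $\Q$-structure on the Levi part requires careful handling.
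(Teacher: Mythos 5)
Your overall strategy is the same as the paper's: a uniform degree bound for class-$\mathcal{H}$ subgroups obtained from the unipotent/semisimple dichotomy (Lemma~\ref{lem:bounddegchevalleyH} in the paper), a finite list of integral representations built from the bounded-degree polynomials on $\SL_N$ and their exterior powers, and height control by linear algebra governed by an integral basis of $\mfrak$ of norm $\ll \height(\Mbf)^\star$ via Minkowski; the lower bound $\height(\Mbf)\ll\norm{\wpz}^\star$ is likewise argued by recovering $\mfrak$ from $\wpz$ through a linear system whose coefficients are polynomial in $\wpz$.

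Two steps are materially under-justified. First, in the degree bound you dispose of the mixed case with ``combining these via the Levi decomposition.'' Knowing that a Levi subgroup and the unipotent radical are each cut out in bounded degree does not by itself bound the degree of equations for the product variety $\Lbf\Rbf_u$: that is the image of a morphism and needs an effective elimination argument, or the paper's detour (take a Chevalley pair $(\pi,\wpz)$ for the unipotent radical $\Ubf_{\Mbf}$, restrict $\pi(\Mbf)$ to the subspace of $\Ubf_{\Mbf}$-fixed vectors, where it becomes a semisimple subgroup to which the already-proven case applies, and pull back). As written this is a missing argument, though a fixable one. Second, your height upper bound rests on the assertion that a basis of $W_{\Mbf}=I_d$ can be chosen with entries polynomial of bounded degree in the data of $\mfrak$. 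This is true --- for connected $\Mbf$ a polynomial of degree at most $d$ vanishes on $\Mbf$ if and only if its iterated left-invariant derivatives along $\mfrak$ up to an order bounded by $\dim V_d$ vanish at the identity, which is a linear system with entries polynomial in an integral basis of $\mfrak$, after which Siegel's lemma gives a controlled integral basis --- but this is precisely the content that needs proving and you only assert it. The paper sidesteps it by changing representation at the last step: instead of wedging $I_d$, it passes to the subspace $W$ of $\Mbf$-fixed vectors of the Lemma~\ref{lem:bounddegchevalleyH} representation, cut out by the visibly linear conditions $\mathrm{D}\rho(\vpz_i)\wpz'=0$, and takes a direct sum of a controlled integral basis of $W$ as the Chevalley vector. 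Your wedge-of-the-ideal variant can be made to work, but the height of that basis is the crux and should not be waved through.
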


It is easy to see that Proposition~\ref{prop:chevalley} cannot be true for all connected $\Q$-groups. Indeed, the proposition implies a uniform bound on the minimal degree of polynomials defining subgroups of class $\mathcal{H}$. 
Such a bound cannot exist for arbitrary $\Q$-subgroups. 
For example, one can take the subtori $\Tbf_d$ of the diagonal torus $\Abf < \SL_3$ given by the additional equation $x_{22}=x_{11}^d$.

To prove Proposition~\ref{prop:chevalley}, we start with the aforementioned uniform degree bound.

\begin{lemma}[Degree bound]\label{lem:bounddegchevalleyH}
There exist an integer $d\geq 1$ depending only on $N$ so that any connected $\Q$-subgroup $\Mbf < \SL_{N}$ of class $\mathcal{H}$  is defined (as a subvariety of $\Mat_{N}$) by rational polynomials of degree at most $d$.

In particular, there exists a finite collection $\mathcal{P}' = \mathcal{P}'(N)$ of integral representations of $\SL_{N}$ such that for any connected subgroup $\Mbf< \SL_{N}$ of class $\mathcal{H}$ there exists $\rho \in \mathcal{P}'$ and $\wpz \in \Z^{\dim(\rho)}$ so that $(\rho,\wpz)$ is a Chevalley pair for $\Mbf$.
\end{lemma}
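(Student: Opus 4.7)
The plan is to establish the uniform degree bound $d=d(N)$ by parametrizing groups of class $\mathcal{H}$ through their Lie algebras in the Grassmannian of $\mathfrak{sl}_N$ and showing that this parameter space is a constructible subset of bounded complexity; the ``In particular'' statement then follows from a standard Chevalley construction within a bounded-dimensional ambient representation.

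More concretely, any connected algebraic subgroup $\Mbf < \SL_N$ is determined by its Lie algebra $\mathfrak{m} \subset \mathfrak{sl}_N$, and the subvariety of Lie subalgebras inside the Grassmannian $\mathrm{Gr}(\dim \Mbf, \mathfrak{sl}_N)$ is cut out by equations of bounded degree depending only on $N$. I would then argue that the Lie subalgebras corresponding to groups of class $\mathcal{H}$ form a constructible subset of bounded complexity. Using the Levi decomposition $\mathfrak{m} = \mathfrak{s} \ltimes \mathfrak{u}$ with $\mathfrak{s}$ semisimple and $\mathfrak{u}$ the nilradical, I would first observe that $\mathfrak{s}$ belongs to one of finitely many $\SL_N(\bar{\Q})$-conjugacy classes of semisimple subalgebras of $\mathfrak{sl}_N$ --- these are classified discretely by the decomposition of $\Q^N$ into irreducible $\mathfrak{s}$-modules --- and then show that for each such $\mathfrak{s}$ the $\mathfrak{s}$-stable nilpotent Lie subalgebras $\mathfrak{u}$ arising as nilradicals of class-$\mathcal{H}$ subalgebras form a bounded family. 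From $\mathfrak{m}$ one recovers the defining ideal of $\Mbf \subset \Mat_N$ by operations of bounded degree, giving the uniform bound $d$.

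For the \emph{In particular} statement: once $d$ is fixed, the defining ideal of any $\Mbf \in \mathcal{H}$ is generated by elements of the finite-dimensional $\Q$-space $V_d$ of polynomials of degree $\leq d$ on $\Mat_N$. A Chevalley pair $(\rho, \wpz)$ can then be constructed uniformly by taking $\wpz = \wedge^{\dim W} W$ for the $\Q$-subspace $W \subset V_d$ of polynomials vanishing on $\Mbf$ and $\rho$ the induced $\SL_N$-representation on $\wedge^{\dim W} V_d$; since there are only finitely many possibilities for $\dim W$, this produces the finite collection $\mathcal{P}'(N)$, and integrality of $\wpz$ follows from the natural $\Z$-structure on $V_d$ together with $\Mbf$ being defined over $\Q$.

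The main obstacle is verifying that the locus of class-$\mathcal{H}$ Lie subalgebras has bounded complexity. Finiteness of semisimple conjugacy classes in $\SL_N$ is classical, but one must rule out continuous families of $\mathfrak{s}$-stable nilpotent subalgebras of $\mathfrak{sl}_N$ that can naively arise when $\mathfrak{s}$-isotypic components have multiplicity exceeding $1$; here the requirement that $\mathfrak{u}$ be the full nilradical of $\mathfrak{m}$, together with closure under the Lie bracket and compatibility with the semisimple Levi structure, should force such families to be discrete. This is precisely where the class-$\mathcal{H}$ hypothesis is crucial --- it fails, for instance, for the tori $\Tbf_d \subset \SL_3$ discussed after Proposition~\ref{prop:chevalley}, whose radicals are themselves (nonunipotent) tori.
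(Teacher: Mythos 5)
Your overall strategy has a genuine gap at its crux. You propose to parametrize class-$\mathcal{H}$ groups by their Lie algebras and to argue that, for each of the finitely many Levi types $\mathfrak{s}$, the nilradicals $\mathfrak{u}$ form a \emph{discrete} family. This is false: already for purely unipotent $\Mbf$ (where $\mathfrak{s}=0$ and $\mathfrak{u}=\mathfrak{m}$ is trivially the full nilradical), there are continuous families of pairwise non-conjugate unipotent subalgebras of $\mathfrak{sl}_N$ once $N$ is moderately large (e.g.\ positive-dimensional moduli of abelian subalgebras of nilradicals of parabolics), and every unipotent subgroup is of class $\mathcal{H}$. So the class-$\mathcal{H}$ hypothesis does not buy discreteness, and your diagnosis of ``where class $\mathcal{H}$ is crucial'' is off: the hypothesis is needed to exclude tori, not to rigidify unipotent parts.

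Relatedly, the step ``from $\mathfrak{m}$ one recovers the defining ideal of $\Mbf$ by operations of bounded degree'' is exactly the content of the lemma and cannot be taken as a formal black box: if passing from a subalgebra to defining equations of the corresponding connected group were a bounded-degree operation in general, the lemma would hold for all connected $\Q$-groups, contradicting the tori $\Tbf_d\subset\SL_3$ (whose Lie algebras are cut out by \emph{linear} equations while the groups require degree $d$). The paper resolves this by splitting into cases: for unipotent groups, subgroups of the upper-triangular unipotent group correspond to \emph{linear} subspaces of its Lie algebra via the logarithm, which is polynomial of degree $\le N-1$, so one gets a degree bound \emph{without} any finiteness of the family; for semisimple groups one uses finiteness of conjugacy classes (as you do); and the general case is assembled by taking a Chevalley pair for the unipotent radical and viewing the quotient as a semisimple subgroup of the fixed-vector space. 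If you replace your discreteness claim for the unipotent part by the logarithm argument and then glue the two cases along the unipotent radical, your proof goes through. Your treatment of the ``in particular'' statement (exterior power of the degree-$\le d$ polynomials) matches the paper and is fine.
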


\begin{proof}
Assuming the degree bound has been established, the second part of the lemma can be deduced as follows. Let $I \subset \Q[\Mat_{N}]$ be the ideal of polynomials vanishing on $\Mbf$ and let $I_d \subset I$ be the subset of polynomials of degree at most $d$.
The regular representation of $\SL_{N}$ on the space of all polynomials $V_d$ on $\SL_{N}$ with degree at most $d$ satisfies that the stabilizer of $I_d$ is $\Mbf$.
The statement of the lemma follows by taking the representation of $\SL_{N}$ on the $\dim(I_d)$-th exterior product of $V_d$. 
We also remark that the representation $\rho$ obtained in this manner satisfies that the entries of $\rho(g)$ are polynomials of degree $\ll_{N} 1$ in the entries of $g$.

It remains to prove the first part of the lemma and, in fact, it suffices to verify that claim over the algebraic closure.
Indeed, let $f$ be a polynomial vanishing on $\Mbf$ and let $\mathcal{A}$ be the finite-dimensional vector space over $\Q$ generated by the coefficients of $f$. Then $f$ can be written as $f = \sum_{i}\alpha_i f_i$ where $\{\alpha_i\}$ is a basis of $\mathcal{A}$ and the polynomials $f_i$ are rational and of degree at most $\deg(f)$. 
By linear independence and Zariski density of $\Mbf(\Q)\subset \Mbf$ (cf.~\cite[Ch.~5, Cor.~18.3]{Borelbook}), the polynomials $f_i$ also vanish on $\Mbf$. 

We have reduced the lemma to the claim that any $\overline{\Q}$-subgroup $\Mbf$ with radical equal to its unipotent radical may be defined by polynomials of degree $\ll_N 1$.
Suppose first that $\Mbf$ is semisimple. 
The representation theory of semisimple Lie algebras (see e.g.~\cite{Jacobson-Liealgebras}) shows that for any semisimple Lie algebra $\hfrak$ the number of $\SL_{N}(\overline{\Q})$-orbits of homomorphisms $\hfrak \to \mathfrak{sl}_{N}$ defined over $\overline{\Q}$ is finite.
In particular, there are finitely many $\SL_{N}(\bar{\Q})$-conjugacy classes of subgroups with Lie algebra isomorphic to $\Lie(\Mbf)$.
Moreover, there are finitely many isomorphism classes of semisimple Lie algebras of dimension at most $\dim(\SL_N)$.
This shows the claim for semisimple subgroups.

If the subgroup $\Mbf$ is unipotent, it is conjugate to a subgroup of the group of upper triangular unipotent matrices $\Ubf<\SL_N$.  
Subgroups of $\Ubf$ correspond to subalgebras of $\Lie(\Ubf)$ via the logarithm map. 
As subalgebras can be defined by linear equations, subgroups of $\Ubf$ can be defined by equations with degree bounded by the degree of the logarithm map (which is $N-1$). 

Suppose now that $\Mbf$ is general. 
As the unipotent case has already been established, let $(\pi,\wpz)$ be a Chevalley pair for the unipotent radical $\Ubf_{\Mbf}$ of $\Mbf$.
In particular, $k = \dim(\pi)\ll_{N}1$.
Let $W \subset \overline{\Q}^k$ be the subspace of vectors fixed by $\Ubf_{\Mbf}$ under $\pi$; by changing basis (over $\overline{\Q}$) we may suppose that $W = \langle e_1,\ldots,e_r\rangle$. 
Thus, $\pi(\Mbf)$ can be viewed as a semisimple subgroup of $\GL_{r}$ after restricting to $W$. By the already proven statement for semisimple subgroups, $\pi(\Mbf)$ can be defined by polynomials equations of degree $\ll_k 1$ and by pullback the same holds for $\Mbf$.
\end{proof}

\begin{proof}[Proof of Proposition~\ref{prop:chevalley}]
Let $\Mbf < \SL_{N}$ be a $\Q$-subgroup of class $\mathcal{H}$ and let $(\rho,\wpz)$ be a Chevalley pair for $\Mbf$ as constructed in Lemma~\ref{lem:bounddegchevalleyH}.
Note that we have no control on the size of $\wpz$ at this point.
Let $W \subset \Q^{\dim(\rho)}$ be the subspace of vectors fixed under $\Mbf(\Q)$.

For a more explicit description, let $\vpz_1,\ldots,\vpz_m$ be a $\Q$-basis of $\mfrak<\mathfrak{sl}_N$ consisting of integral vectors with $\|\vpz_i\| \ll \height(\Mbf)^\star$ (using Minkowski's second theorem). 
Then
\begin{align*}
W = \{\wpz' \in \Q^{k}: \mathrm{D}\rho(\vpz_i)\wpz' = 0 \text{ for all }i \}.
\end{align*}
Then $W$ has height $\height(W)\ll \height(\Mbf)^\star$.
Thus, there is a basis $\wpz_1,\ldots,\wpz_{\dim(W)}$ of $W$ consisting of integral vectors with $\|\wpz_i\|\ll \height(\Mbf)^\star$.
If $g \in \SL_N$ satisfies $\rho(g)\wpz_i=\wpz_i$ for every $i$ then $\rho(g)\wpz = \wpz$ (since $\wpz \in W$) and hence $g \in \Mbf$.

Let $\hat{W}=\bigoplus^{\dim(W)} \Q^{\dim(\rho)}$ where $\bigoplus^{\dim(W)} \Z^{\dim(\rho)}$ is the set of integer points in $\hat{W}$ and the Euclidean norm on $\hat{W}$ is the direct sum of the Euclidean norms in the factors.
Let $\hat{\rho}$ be the representation of $\SL_{N}$ on $\hat{W}$ obtained from $\rho$.
The integral vector $\hat{\wpz} = \wpz_1\oplus \ldots \oplus \wpz_{\dim(W)}$ satisfies $\|\hat{\wpz}\| \ll \height(\Mbf)^\star$ and $\Mbf = \{g \in \SL_N: \hat{\rho}(g)\hat{\wpz} = \hat{\wpz}\}$ by the observations above. This proves the upper bound in the proposition.
For the lower bound, notice that $\mfrak = \{\vpz \in \mathfrak{sl}_N: \mathrm{D}\hat{\rho}(\vpz)\hat{\wpz} = 0\}$ implies $\height(\mfrak) \ll \norm{\hat{\wpz}}^\star$. 
This concludes the proof.
\end{proof}

\subsection{A bound on the number of the connected components}

There is no general bound on the number of connected components of $\Q$-subgroup $\Mbf < \SL_N$.
For an example, one may view for every $k\in \N$ the group of $k$-th roots of unity as $\Q$-subgroups of the multiplicative group $\G_m$.
These finite groups contain very few rational points and, in fact, the following holds.

\begin{lemma}\label{lem:bound on index}
Let $\Mbf \subset \SL_{N}$ be a $\Q$-subgroup with $\Mbf(\Q)\subset \Mbf$ Zariski-dense.
Then 
\begin{align*}
[\Mbf:\Mbf^\circ] \ll_{N} 1
\end{align*}
where $\Mbf^\circ$ is the identity component of $\Mbf$.
\end{lemma}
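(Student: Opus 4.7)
The plan is to combine a B\'ezout-type bound on the component group of the normalizer of $\Mbf^\circ$ in $\SL_N$ with Minkowski's classical bound on finite subgroups of $\GL_m(\Q)$.

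First I would use the Zariski-density hypothesis to show that the finite quotient $F:=\Mbf/\Mbf^\circ$ is a constant group scheme over $\Q$: the image of $\Mbf(\Q)\to F(\overline{\Q})$ is Zariski-dense in the finite set $F(\overline{\Q})$ and hence equal to it, so every $\overline{\Q}$-point of $F$ is already $\Q$-rational and each component of $\Mbf$ contains a $\Q$-rational point.

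Next I would consider $N:=N_{\SL_N}(\Mbf^\circ)$.  In characteristic zero this normalizer coincides with the stabilizer in $\SL_N$ of the Lie subalgebra $\mfrak:=\Lie(\Mbf^\circ)\subset \mathfrak{sl}_N$ under the adjoint action, so it is cut out in $\SL_N$ by polynomial equations of degree bounded in $N$ alone (the conditions $\Ad(g)v\in\mfrak$ as $v$ ranges over a basis of $\mfrak$).  A standard B\'ezout-type bound on the number of irreducible components of a variety defined by polynomials of bounded degree in a bounded-dimensional affine space then yields $[N:N^\circ]\le B(N)$ for some $B(N)$.  Since $\Mbf\subset N$ and $\Mbf^\circ\subset N^\circ$, the map $F\to N/N^\circ$ is well defined with kernel $(\Mbf\cap N^\circ)/\Mbf^\circ$, so
\[
|F|\le B(N)\cdot|(\Mbf\cap N^\circ)/\Mbf^\circ|,
\]
and the problem reduces to bounding the kernel.

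Finally, $\Mbf\cap N^\circ$ is a closed subgroup of $\Mbf$ containing $\Mbf^\circ$, hence a union of connected components of $\Mbf$, and therefore inherits Zariski-density of its rational points from $\Mbf$.  After replacing $\Mbf$ by $\Mbf\cap N^\circ$ I may thus assume $\Mbf\subset N^\circ$.  In this reduced situation $\Mbf^\circ$ is normal in the connected group $N^\circ$, the quotient $Q:=N^\circ/\Mbf^\circ$ is a connected linear algebraic $\Q$-group of dimension at most $N^2-1$, and $F$ embeds as a finite subgroup of $Q(\Q)$.  Choosing a faithful $\Q$-rational embedding $Q\hookrightarrow \GL_m$ with $m$ bounded in $\dim Q$ exhibits $F$ as a finite subgroup of $\GL_m(\Q)$, and Minkowski's theorem gives $|F|\ll_N 1$.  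Combining with the bound of the previous paragraph yields $[\Mbf:\Mbf^\circ]\ll_N 1$.

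The hardest step is the last one: producing a faithful $\Q$-rational representation of the auxiliary connected linear algebraic $\Q$-group $Q$ whose dimension is uniformly bounded by a function of $N$.  This follows from standard structure theory---combining a Levi decomposition $Q=L\ltimes U$ with the standard faithful-representation bounds for reductive groups (in terms of rank and dimension) and for unipotent groups (via the upper-triangular embedding $U\hookrightarrow \GL_{\dim U+1}$)---but one must check that all constants depend only on $N$.
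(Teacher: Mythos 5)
Your route is genuinely different from the paper's. The paper never forms the quotient $N^\circ/\Mbf^\circ$: it first disposes of the case $\Mbf^\circ=\{e\}$ directly by Minkowski, then handles the case where $\Mbf^\circ$ is a nontrivial $\Q$-torus by hand (bounding the automorphisms of the torus induced by conjugation, passing to the centralizer and its center, and using that finite subgroups of the multiplicative group of a bounded-degree number field consist of roots of unity of bounded order), and finally reduces the general case to the torus case via a Chevalley pair for $\Mbf^{\mathcal H}$ from Lemma~\ref{lem:bounddegchevalleyH} --- crucially, the target of that representation has dimension $\ll_N 1$ because groups of class $\mathcal H$ are cut out by polynomials of uniformly bounded degree. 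Your normalizer/B\'ezout step is sound (in characteristic zero $N_{\SL_N}(\Mbf^\circ)$ is indeed the stabilizer of $\Lie(\Mbf^\circ)$ under $\Ad$, hence defined by equations of degree $O_N(1)$, and a refined B\'ezout bound controls its number of irreducible components), and the reduction to a finite subgroup of $Q(\Q)$ with $Q=N^\circ/\Mbf^\circ$ connected is correct.

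The gap is exactly where you locate it, and it is larger than ``one must check the constants.'' The paper's remark after Proposition~\ref{prop:chevalley} (the tori $x_{22}=x_{11}^d$ in $\SL_3$) shows that $\Mbf^\circ$ need not be definable by bounded-degree equations, so you cannot realize $Q$ inside a bounded $\GL_m$ by any Chevalley-type construction applied to $\Mbf^\circ$; you genuinely need the abstract statement that every connected linear algebraic $\Q$-group of dimension at most $N^2$ admits a faithful $\Q$-rational representation of dimension bounded in terms of $N$ alone. That statement is true but not off-the-shelf: for the central torus of the Levi one must bound the degree of the splitting field, which itself comes from Minkowski's theorem applied to $\Gal(\overline{\Q}/\Q)$ acting through a finite subgroup of $\GL_r(\Z)$ on the character lattice; for the semisimple part one must control arbitrary $\Q$-forms (say via fundamental representations together with a bound on Schur indices, or by treating the adjoint quotient and the finite center separately); and for $L\ltimes U$ the direct sum of faithful representations of the two factors is not faithful, so one needs, e.g., the $L$-equivariant Birkhoff embedding of $U$. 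None of these steps fails, but as written the proof is incomplete at its hardest point --- which is precisely the point the paper's more hands-on reduction to the torus case is engineered to avoid.
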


The proof utilizes the subgroup $\Mbf^{\mathcal{H}}$ defined in \cite{LMMS}: the largest subgroup of $\Mbf$ of class $\mathcal{H}$.
Note that $\Mbf^{\mathcal{H}}$ is a normal subgroup of $\Mbf$.
We also use the following simple lemma.

\begin{lemma}\label{lem:centralizeroftorus}
Let $\Tbf < \SL_{N}$ be a non-trivial torus. 
The centralizer of $\Tbf$ is connected and reductive.
Moreover, the center of the centralizer has at most $O_N(1)$ connected components and the identity component is a torus.
\end{lemma}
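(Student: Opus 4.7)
The plan is to use the weight-space decomposition of $\Tbf$ acting on $\C^N$ to identify the centralizer as a standard Levi subgroup. First I would pass to the algebraic closure, since connectedness, dimension, reductivity, and the property of being a torus are all geometric invariants that descend to the field of definition of $\Tbf$. Over $\C$, one has a decomposition $\C^N = \bigoplus_{\chi} V_\chi$ into weight spaces for the characters of $\Tbf$, and since distinct characters take distinct values on suitable regular elements of $\Tbf$, any element commuting with $\Tbf$ must preserve each $V_\chi$. This gives
\[
Z_{\SL_N}(\Tbf) = \SL_N \cap \prod_\chi \GL(V_\chi) = S\Bigl(\prod_\chi \GL(V_\chi)\Bigr),
\]
the standard Levi subgroup associated with the partition of $\{1,\ldots,N\}$ by the $V_\chi$. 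Being the kernel of the product-of-determinants character on a connected reductive group, this is itself connected and reductive.

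Write $C = Z_{\SL_N}(\Tbf)$. The next step is to compute $Z(C)$. The derived group is $[C,C] = \prod_\chi \SL(V_\chi)$, whose center is $\prod_\chi \mu_{\dim V_\chi}$, a finite group of order at most $\prod_\chi \dim(V_\chi) \leq N^N$, certainly $\ll_N 1$. For any connected reductive group one has $Z(C) = Z([C,C]) \cdot Z(C)^\circ$, where $Z(C)^\circ$ is a torus (the central torus of $C$, which here is the subtorus of the diagonal torus of $\prod_\chi \GL(V_\chi)$ acting by a single scalar on each $V_\chi$, intersected with $\SL_N$). Consequently the quotient $Z(C)/Z(C)^\circ$ is a quotient of the finite group $Z([C,C])$, and hence has cardinality $\ll_N 1$, which is exactly the bound on connected components claimed.

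The two remaining bookkeeping items are the following. One needs to check that $Z(C)^\circ$, which a priori is a torus only over $\overline{\Q}$, is a torus over the field of definition of $\Tbf$; this follows because the formation of the centralizer, the derived subgroup, the center, and the identity component all commute with extension of scalars among characteristic-zero fields, and an algebraic group whose base change to $\overline{\Q}$ is a torus is itself a torus. Similarly, the bound on the number of components of $Z(C)$ is insensitive to the base field. I do not anticipate a substantive obstacle here — the statement is essentially a combination of the classical structure of centralizers of tori in $\SL_N$ (namely, that they are Levi subgroups) with the standard fact that the center of a connected reductive group is the product of a central torus and a finite group whose cardinality is controlled by the rank.
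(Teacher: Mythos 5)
Your proof is correct and follows essentially the same route as the paper's: both identify $Z_{\SL_N}(\Tbf)$ with the block-diagonal subgroup $S\bigl(\prod_\chi \GL(V_\chi)\bigr)$ determined by the weight decomposition of $\Tbf$ on the standard representation, observe that this Levi subgroup is connected and reductive, and then bound the component group of its center. The only difference is cosmetic: the paper writes the center explicitly as $\{(x_1,\ldots,x_k):\prod_j x_j^{n_j}=1\}$ and reads off the bound directly, whereas you deduce it from the decomposition $Z(C)=Z([C,C])\cdot Z(C)^{\circ}$.
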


\begin{proof}
Without loss of generality, we assume that $\Tbf$ is contained in the diagonal subgroup $\mathbf{A}$.
Let $\chi_{ij}$ for $1 \leq i \neq j \leq N$ be the roots of $\mathbf{A}$ given by $\chi_{ij}(a) = \frac{a_{ii}}{a_{jj}}$. 
Let $\Omega$ be the (possibly empty) set of roots which are trivial on $\Tbf$ and let $\Cbf = \bigcap_{\chi \in \Omega} \ker(\chi) \supset \Tbf$.
After permutation, we may assume that there are $1 = i_0 <i_1< \ldots<i_k=N$ such that 
\begin{align*}
\Cbf = \{a \in \mathbf{A}: a_{ii} = a_{jj} \text{ for all } i_{k'} \leq i,j \leq i_{k'+1}\text{ where } k' \leq k\}.
\end{align*}
A simple calculation shows that the centralizer $\mathbf{L}$ of $\Cbf$ is the subgroup of block matrices consisting of blocks of size $i_1 \times i_1$, $(i_2-i_1) \times (i_2-i_1)$ and so on. In particular, $\mathbf{L}$ is reductive and connected.
The roots outside the diagonal blocks that define $\Lbf$ are non-trivial on $\Tbf$ by construction; hence, the centralizer of $\Tbf$ is $\Lbf$.
Lastly, note that the center $\Cbf$ of $\Lbf$ is isomorphic to 
\begin{align*}
\{(x_1,\ldots,x_k): x_1^{i_1}\cdots x_k^{i_k-i_{k-1}}=1\}.
\end{align*}
Clearly, this variety has at most $O_N(1)$ connected components, and the identity component is a torus.
\end{proof}

\begin{proof}[Proof of Lemma~\ref{lem:bound on index}]
Assume first that $\Mbf^{\circ}$ is trivial.
By classical work of Minkowski \cite{Minkowski} (see also e.g.~\cite{GuralnickLorenz,Serre-orders}), the cardinality of finite subgroups of $\GL_m(\Q)$ is uniformly bounded in terms of $m$.
(The simple bound of $3^{m^2}$ follows from the fact that the kernel of $\GL_m(\Z) \to \GL_m(\Z/3\Z)$ is torsion-free.)
In particular, $|\Mbf(\Q)|\ll_N 1$ and so the claim in this case follows from Zariski-density of $\Mbf(\Q)$ in $\Mbf$.

Suppose now that $\Tbf = \Mbf^\circ$ is a non-trivial $\Q$-torus.
There are finitely many automorphisms $\Tbf \to \Tbf$ obtained through conjugation by elements in $\SL_{N}$, and the number of such automorphisms is bounded in terms of $\dim(\Tbf)$.
Indeed, the normalizer of $\Tbf$ contains the centralizer of $\Tbf$ with finite index \cite[p.~117]{Borelbook} and acts on the group of $\overline{\Q}$-characters of $\Tbf$ with kernel the centralizer of $\Tbf$. 
The group of characters is isomorphic to $\Z^{\dim(\Tbf)}$ and so by the bound on the order of finite subgroups of $\GL_{\dim(\Tbf)}(\Q)$ (discussed at the beginning of the proof) the above claim holds.
Any element of $\Mbf$ defines an automorphism of $\Tbf$ and, after switching to a subgroup of index $\ll_{N} 1$, we may suppose that $\Mbf$ acts trivially on $\Tbf$.
Equivalently, $\Mbf$ is contained in the centralizer $\Lbf$ of $\Tbf$ which is reductive by Lemma~\ref{lem:centralizeroftorus}.

Let $\rho$ be the adjoint representation of $\Lbf$. The image of $\Mbf$ under $\rho$ is a finite subgroup equal to $\rho(\Mbf(\Q))$ by Zariski density.
As in the first case where $\Mbf^\circ$ was trivial, this implies that $|\rho(\Mbf(\Q))| \ll_N 1$.
We may thus replace $\Mbf$ by the kernel of $\rho|_{\Mbf}$ so that $\Mbf$ is contained in the center $\Cbf$ of $\Lbf$.
We may further assume that $\Mbf \subset \Cbf^\circ$ by Lemma~\ref{lem:centralizeroftorus}.

Let $F/\Q$ be a Galois extension splitting the torus $\Cbf^\circ$.
Note that $[F:\Q] \ll_N 1$ since $\Cbf^\circ <\SL_N$.
If $\chi$ is an $F$-character on $\Cbf^\circ$ with $\chi|_{\Tbf} = \mathrm{id}$, then $\chi(\Mbf(\Q)) \subset F^\times$ is a finite subgroup. Necessarily, $\chi(\Mbf(\Q))$ consists of roots of unity in $F$. 
The degree of a primitive root of unity in $F$ is controlled by $[F:\Q]$ and hence $|\chi(\Mbf(\Q))|\ll_{N}1$. 
We apply this discussion to a minimal generating set of $F$-characters for $\Cbf^\circ/\Tbf$ to obtain a morphism $\phi:\Cbf^\circ \to \G_m^r$ defined over $F$ with kernel $\Tbf$ and $|\phi(\Mbf)|\ll_{N}1$ where $r= \dim(\Cbf^\circ)-\dim(\Tbf)$. 
This proves the lemma when $\Mbf^\circ$ is a $\Q$-torus.

Turning to the general case, let $(\rho,\wpz)$ be a Chevalley pair for the group $\Mbf^{\mathcal{H}}$ as found in Lemma~\ref{lem:bounddegchevalleyH}.
Considering the $\Mbf^\mathcal{H}$-fixed vectors under $\rho$, we obtain (by restriction) a representation $\rho': \wholenorm{\Mbf^\mathcal{H}} \to \SL_m$ defined over $\Q$ where $m \leq \dim(\rho)$ and $\ker(\rho') = \Mbf^\mathcal{H}$. 
To conclude the proof, it suffices to show that $[\rho'(\Mbf):\rho'(\Mbf^\circ)]\ll_{m} 1$. But $\rho'(\Mbf^\circ)$ is a (possibly trivial) $\Q$-torus and $\rho'(\Mbf)(\Q) \supset \rho'(\Mbf(\Q))$ is dense in $\rho'(\Mbf)$ so the already proven special cases imply the lemma.
\end{proof}

\subsection{$\Q$-groups generated by lattice elements}\label{sec:heightwhengen}

If  $\gamma \in \SL_N(K)$ for $K$ a number field we define the height of $\gamma$ to be $\height(\gamma)=\prod_v \max\{1,|\gamma_{ij}|_v\}$ where $v$ runs over all places of $K$. For $\gamma \in \SL_N(\Q)$ thisdefinition reduces to $\height(\gamma)=m\max(|\gamma_{ij}|)$, where $m \in \Q_{>0}$ is minimal such that $m \gamma \in \Mat_N(\Z)$.

\begin{proposition}\label{prop:heightifgenbylattice}
There exists $\consta\label{a:heightclassHpart}>0$ depending on $N$ with the following property. 
Let $\gamma_1,\ldots,\gamma_k \in \SL_{N}(\Q)$ with $\height(\gamma_i) \leq T$ for all $i$ and consider the $\Q$-subgroup
\begin{align*}
\Mbf = \overline{\langle \gamma_1,\ldots,\gamma_k\rangle}^z < \SL_N.
\end{align*}
Then $\height(\Mbf^\mathcal{H}) \ll T^{\ref{a:heightclassHpart}}$.
\end{proposition}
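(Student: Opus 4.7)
The plan is to bound $\height(\mathfrak{m}^{\mathcal{H}})$, where $\mathfrak{m}^{\mathcal{H}} := \Lie(\Mbf^{\mathcal{H}})$, since $\height(\Mbf^{\mathcal{H}})$ is the norm of a primitive integral representative of the line $\bigwedge^{\dim \Mbf^{\mathcal{H}}}\mathfrak{m}^{\mathcal{H}}$. Since $\Mbf^{\mathcal{H}}$ is connected, it lies in $\Mbf^\circ$, and by Lemma~\ref{lem:bound on index} we have $[\Mbf:\Mbf^\circ] \ll_N 1$. Because the Zariski-dense subgroup $\langle \gamma_i\rangle$ surjects onto the finite quotient $\Mbf/\Mbf^\circ$ via words of bounded length, we obtain a generating set of $\Lambda := \langle \gamma_i\rangle \cap \Mbf^\circ$ (Zariski-dense in $\Mbf^\circ$) of height $\ll T^\star$. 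We thus replace $\Mbf$ by $\Mbf^\circ$.

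For each $\gamma \in \Lambda$, the Jordan--Chevalley decomposition $\gamma = \gamma_s\gamma_u$ is defined over $\Q$ by Galois uniqueness and yields factors of height $\ll T^\star$ (being polynomials in $\gamma$ with coefficients determined by its minimal polynomial). The unipotent factor contributes $\log \gamma_u = \sum_{j=1}^{N-1}\tfrac{(-1)^{j-1}}{j}(\gamma_u-I)^j \in \mathfrak{m}$ of norm $\ll T^\star$, where $\mathfrak{m} := \Lie(\Mbf)$. The Zariski closure $\Tbf_\gamma := \overline{\langle \gamma_s\rangle}^z$ is a $\Q$-torus whose Lie algebra $\mathfrak{t}_\gamma \subset \mathfrak{m}$ is dual to the lattice of multiplicative relations among the eigenvalues of $\gamma_s$; since these eigenvalues are algebraic of degree $\leq N$ and Weil height $O(\log T)$, classical Diophantine estimates on short multiplicative relations produce a $\Q$-basis of $\mathfrak{t}_\gamma$ of norm $\ll T^\star$. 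Let $\mathfrak{m}''$ denote the smallest $\Ad(\Lambda)$-invariant Lie subalgebra of $\mathfrak{m}$ containing $\bigcup_{\gamma}(\mathfrak{t}_\gamma + \R\log\gamma_u)$. Iterating Lie brackets and adjoint actions by bounded-length words grows norms only polynomially, and the process stabilizes within $\dim\mathfrak{m}\leq N^2$ steps, so $\height(\mathfrak{m}'') \ll T^\star$.

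We verify $\mathfrak{m}''=\mathfrak{m}$. By Zariski density, $\mathfrak{m}''$ is $\Ad(\Mbf)$-invariant, corresponding to a connected normal subgroup $\Mbf'' \subseteq \Mbf$. Since $\gamma_u \in \Mbf''$ and $\gamma_s^{|\Tbf_\gamma/\Tbf_\gamma^\circ|} \in \Tbf_\gamma^\circ \subseteq \Mbf''$ (with $|\Tbf_\gamma/\Tbf_\gamma^\circ|\ll_N 1$ by Lemma~\ref{lem:bound on index} applied to $\Tbf_\gamma$), the image of each $\gamma \in \Lambda$ in $\Mbf/\Mbf''$ is torsion of uniformly bounded order. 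A Zariski-dense set of uniformly bounded-order torsion elements cannot occur in a positive-dimensional connected algebraic group, since it would be contained in the proper closed subvariety $\bigcup_{n \ll_N 1}\{g : g^n = e\}$; hence $\Mbf''=\Mbf$ and $\mathfrak{m}''=\mathfrak{m}$, giving $\height(\mathfrak{m}) \ll T^\star$.

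Finally, using the Levi decomposition $\mathfrak{m} = \mathfrak{s}\oplus\mathfrak{t}\oplus\mathfrak{u}$ (with an $\Sbf$-invariant lift of $\mathfrak{t}$ furnished by Malcev--Harish-Chandra, yielding $[\mathfrak{s},\mathfrak{t}]=0$ since $\Aut(\Tbf)$ is discrete) one has
\[
\mathfrak{m}^{\mathcal{H}} = \mathfrak{s}+\mathfrak{u} = [\mathfrak{m},\mathfrak{m}]+\mathrm{nilrad}(\mathfrak{m}).
\]
Both the derived subalgebra (spanned by pairwise commutators of a $\Q$-basis) and the nilpotent radical (cut out from the solvable radical by the condition $(\mathrm{ad}\,x)^{\dim\mathfrak{m}}=0$) are computed from $\mathfrak{m}$ by operations of bounded complexity, so $\height(\mathfrak{m}^{\mathcal{H}}) \ll \height(\mathfrak{m})^\star \ll T^\star$, concluding the proof. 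The main obstacle is bounding $\height(\mathfrak{t}_\gamma)$ polynomially in $T$: this relies on Diophantine input on multiplicative relations among algebraic numbers of bounded height and degree (e.g.\ Minkowski-type lattice arguments or Masser-type bounds).
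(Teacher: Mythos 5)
Your reduction to the connected case matches the paper's (Lemma~\ref{lem:generatingsetconncomp}), and the Jordan decomposition also appears there, but from that point on you take a genuinely different and ultimately gappy route. The central problem is your treatment of the semisimple parts. You propose to bound $\height(\mathfrak t_\gamma)$ for $\Tbf_\gamma=\overline{\langle\gamma_s\rangle}^z$ by controlling the full lattice of multiplicative relations among the eigenvalues of $\gamma_s$, and you yourself flag this as ``the main obstacle'', deferring it to unnamed ``classical Diophantine estimates''. That is precisely the hard point: producing a polynomially bounded basis of the relation lattice of algebraic numbers of bounded degree and height is a nontrivial theorem (Loxton--van der Poorten, Masser), not a routine Minkowski argument, and without a precise statement your bound on $\height(\mathfrak m)$ --- and hence everything downstream --- is unsupported. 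The paper is engineered to avoid exactly this: it never bounds $\height(\Mbf)$ or the height of any torus. Instead it shows, via boundedly many iterated commutators (using Lemma~\ref{lem:bound on index} to force stabilization), that $\Mbf^{\mathcal H}$ is the Zariski closure of the group generated by the commutators $[\gamma_i,\gamma_j]$ and the unipotent Jordan parts $\gamma_i^u$ --- explicit elements of height $\ll T^\star$ --- and then bounds the height of the defining ideal of this class-$\mathcal H$ group directly, by evaluating the space $V_d$ of polynomials of degree at most $d$ (with $d$ bounded by Lemma~\ref{lem:bounddegchevalleyH}, a bound available for class-$\mathcal H$ groups but, as the paper's torus example shows, not for general subgroups) on words of bounded length in the generators and noting that the kernels stabilize after at most $\dim V_d$ steps.

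A second, independent error occurs in your last step: the identity $\mathfrak m^{\mathcal H}=[\mathfrak m,\mathfrak m]+\operatorname{nilrad}(\mathfrak m)$ is false --- for $\Mbf$ a torus the right-hand side is all of $\mathfrak m$ (an abelian Lie algebra equals its own nilradical) while the left-hand side is $0$. What is needed is $[\mathfrak m,\mathfrak m]+\Lie(\Ubf_{\Mbf})$ with $\Ubf_{\Mbf}$ the unipotent radical, and $\Lie(\Ubf_{\Mbf})$ is not cut out of the solvable radical by $(\operatorname{ad}x)^{\dim\mathfrak m}=0$, since central toral elements satisfy that condition; one needs nilpotency as matrices in $\mathfrak{sl}_N$. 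There is also a smaller gap in the claim $\mathfrak m''=\mathfrak m$: you define $\mathfrak m''$ using $\mathfrak t_\gamma$ and $\log\gamma_u$ for \emph{all} $\gamma\in\Lambda$ (which is what makes every element of $\Lambda$ have bounded-torsion image in $\Mbf/\Mbf''$), but the height bound is only argued for the subalgebra built from finitely many bounded-height generators; you would have to show these two subalgebras coincide. None of these issues arise in the paper's argument.
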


We will reduce Proposition ~\ref{prop:heightifgenbylattice} to the case where is $\Mbf$ is connected. 
To that end, we will apply Lemma~\ref{lem:bound on index} which implies the following by a standard argument.

\begin{lemma}\label{lem:generatingsetconncomp}
There exists $\consta\label{a:generatingsetconncomp}>0$ and $k_0\in \N$ depending only on $N$ with the following property.
Let $T>2$ and $\gamma_1,\ldots,\gamma_k \in \SL_{N}(\Q)$ with $\height(\gamma_i)\leq T$.
Set 
\begin{align*}
\Mbf = \overline{\langle \gamma_1,\ldots,\gamma_k\rangle}^z.
\end{align*}
Then there exist $k' \leq k_0k$ and $\eta_1,\ldots,\eta_{k'}\in \SL_{N}(\Q)$ with $\height(\eta_i)\leq T^{\ref{a:generatingsetconncomp}}$ and
\begin{align*}
\Mbf^\circ = \overline{\langle \eta_1,\ldots,\eta_{k'}\rangle}^z.
\end{align*}  
\end{lemma}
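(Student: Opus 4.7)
The plan is to combine Lemma~\ref{lem:bound on index} with a Schreier-rewriting argument. Set $\Gamma=\langle\gamma_1,\ldots,\gamma_k\rangle$ and $H=\Gamma\cap\Mbf^{\circ}$. Since $\Gamma$ is Zariski-dense in $\Mbf$ by definition, Lemma~\ref{lem:bound on index} provides $m:=[\Mbf:\Mbf^{\circ}]\leq m_0$ for a constant $m_0=m_0(N)$, and the projection $\pi:\Mbf\to Q:=\Mbf/\Mbf^{\circ}$ sends $\{\gamma_i\}$ to a generating set of $Q$ (otherwise $\Gamma$ would lie in a finite union of proper cosets of $\Mbf^{\circ}$, which is a proper Zariski-closed subset, contradicting density).

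First, I would do a breadth-first search in the Cayley graph of $Q$ with respect to the generating set $\{\pi(\gamma_i^{\pm 1})\}$. Since $|Q|\leq m_0$, this produces coset representatives $\delta_1=e,\delta_2,\ldots,\delta_m\in\Gamma$, each expressible as a word of length at most $m_0-1$ in $\gamma_1^{\pm 1},\ldots,\gamma_k^{\pm 1}$. A direct computation shows that the height function is submultiplicative up to a factor of $N$ (the entries of a product of two matrices of height $H_1,H_2$ are bounded by $N\cdot H_1 H_2/(m_1 m_2)$ where $m_1,m_2$ are the minimal integer-clearing denominators), so
\[
\height(\delta_j)\leq (NT)^{m_0-1}\leq T^{O_N(1)}.
\]

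Next, I would invoke Schreier's lemma: the subgroup $H$ is generated by the at most $km_0$ Schreier generators
\[
\eta_{i,j}:=\delta_j\,\gamma_i\,\delta_{\sigma(i,j)}^{-1},\qquad 1\leq i\leq k,\ 1\leq j\leq m,
\]
where $\sigma(i,j)$ is defined by the membership $\delta_j\gamma_i\in\delta_{\sigma(i,j)}\Mbf^{\circ}$. By the same submultiplicativity, each $\eta_{i,j}$ has height bounded by $T^{O_N(1)}$, giving the required $T^{\ref{a:generatingsetconncomp}}$ bound.

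Finally, I would verify that $H$ is Zariski-dense in $\Mbf^{\circ}$, which will complete the proof with $k_0=m_0$. Writing $\Gamma=\bigsqcup_{j=1}^{m}\delta_j H$ and using that Zariski closure commutes with finite unions, we obtain
\[
\Mbf=\overline{\Gamma}^z=\bigsqcup_{j=1}^{m}\delta_j\,\overline{H}^z.
\]
Since $\overline{H}^z\subseteq\Mbf^{\circ}$ and $[\Mbf:\Mbf^{\circ}]=m$, counting components (or indices) forces $\overline{H}^z=\Mbf^{\circ}$. Thus the $\eta_{i,j}$ are the desired generators. The only point requiring real care is this last Zariski-density step; everything else is mechanical once Lemma~\ref{lem:bound on index} is in hand.
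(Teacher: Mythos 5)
Your proof is correct and follows essentially the same route as the paper: there too one bounds $[\Mbf:\Mbf^\circ]$ via Lemma~\ref{lem:bound on index}, builds coset representatives as words of bounded length in the $\gamma_i$, and forms the Schreier-type generators $m_{ij}=c_i\gamma_j c_{k_{ij}}^{-1}$ (your $\eta_{i,j}$), showing by the same rewriting that they generate a Zariski-dense subgroup of $\Mbf^\circ$ with polynomially bounded heights. Your final irreducibility/coset-counting argument for $\overline{H}^z=\Mbf^\circ$ is a clean justification of the density of $\langle\gamma_1,\ldots,\gamma_k\rangle\cap\Mbf^\circ(\Q)$ in $\Mbf^\circ$, which the paper simply asserts ``by construction.''
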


\begin{proof}
We begin with an elementary observation:
If $\mathcal{F}$ is a finite group and $\mathcal{S}$ is a generating set of $\mathcal{F}$ containing $1$, then there exists $\ell \leq |\mathcal{F}|$ with $\mathcal{S}^\ell = \mathcal{F}$. Indeed, as $1 \in \mathcal{S}$ we have an ascending sequence $\mathcal{S} \subset \mathcal{S}^2 \subset \mathcal{S}^3\subset \ldots$ and there must be some $\ell \leq |\mathcal{F}|$ with $\mathcal{S}^\ell = \mathcal{S}^{\ell+1}$. But in that case, $\mathcal{S}^\ell$ is invariant under multiplication by $\mathcal{S}$ and, as $\mathcal{S}$ is a generating set, $\mathcal{S}^\ell = \mathcal{F}$.
We apply this discussion to $\mathcal{F} = \Mbf/\Mbf^\circ$.
In view of Lemma~\ref{lem:bound on index} we may replace the generating set $\mathcal{S} = \{\gamma_1,\ldots,\gamma_k\}$ and assume that any coset of $\Mbf^0$ in $\Mbf$ is represented by some element of the generating set.

We fix representives $\id=c_1, c_2,\ldots,c_I \in \mathcal{S}$ of each coset where $I = [\Mbf:\Mbf^0]\ll_N 1$.
For any $1\leq i \leq I$ and $1 \leq j \leq k$ write 
\begin{align}\label{eq:f.i.in f.g.}
c_i\gamma_j = m_{ij}c_{k_{ij}}
\end{align}
for some $k_{ij} \leq I$ and $m_{ij} \in \Mbf^0(\Q)$.
In particular, $\gamma_j = m_{1j}c_{k_{1j}}$.
Note that $\norm{m_{ij}} \ll T^\star$.

We claim that the elements $m_{ij}$ generate a dense subgroup of $\Mbf^0$:
By construction the group $\Delta = \langle \gamma_1,\ldots,\gamma_k\rangle \cap \Mbf^0(\Q)$ is dense in $\Mbf^0$. 
For any word $\omega = \gamma_{j_1}\cdots \gamma_{j_r} \in \Delta$ successive applications of \eqref{eq:f.i.in f.g.} show that $\omega$ is a word in the elements $m_{ij}$ which concludes the claim and hence the lemma.
\end{proof}

\begin{proof}[Proof of Proposition~\ref{prop:heightifgenbylattice}]
By Lemma~\ref{lem:generatingsetconncomp}, we may assume without loss of generality that $\Mbf$ is connected.

\textsc{Case 1: $\mathcal{H}$-groups}.
Suppose that $\Mbf$ is a group of class $\mathcal{H}$.
Let $V_d$ be the space of polynomials on $\Mat_{N}$ of degree at most $d$ and let $I_d \subset V_d$ be the subspace of such polynomials vanishing on $\Mbf$. Here, we take $d\in \N$ to be the degree bound obtained in Lemma~\ref{lem:bounddegchevalleyH}.
It suffices now to show that $\height(I_d) \ll T^\star$.
Set $\gamma_0 = \id$ for convenience and assume without loss of generality that the generating set is invariant under inversion.
For any $m\geq 1$, consider the following linear map:
\begin{align*}
\phi_m: V_d \to \Q^{(k+1)^m},\ p \mapsto (p(\gamma_{i_1}\cdots\gamma_{i_m}))_{i_1,\ldots,i_m=0}^k.
\end{align*}
By construction,
\begin{align*}
\ker(\phi_1) \supset \ker(\phi_2) \supset \ker(\phi_3)\supset \ldots
\end{align*}
Let $m\leq \dim(V_d)$ be minimal such that $\ker(\phi_m) = \ker(\phi_{m+1})$. We claim $\ker(\phi_m)$ is invariant under the action of $\gamma_1,\ldots,\gamma_k$. Indeed let $p\in\ker(\phi_m)=\ker(\phi_{m+1})$, then for any word $w$ of length $m$ in $\{\gamma_i\}$, we have  
\[
\gamma_i.p(w)=p(w\gamma_i)=0.
\]
This implies $\ker(\phi_m)$ is invariant under $\langle \gamma_1, \ldots,\gamma_k\rangle$, hence,  
$\ker(\phi_m) = \ker(\phi_{m+\ell})$ for all $\ell \geq 0$. In particular, 
\[
\ker(\phi_m)=\bigcap_{\ell\geq 1}\ker(\phi_{\ell}).
\]
The above and Zariski-density of $\langle \gamma_1, \ldots,\gamma_k\rangle$ implies that $\ker(\phi_m)=I_d$.  

Note however that since $m\leq \dim(V_d)$, the subspace $\ker (\phi_m)$ has height $\ll T^\star$ so the proposition follows in this case.

\textsc{Case 2: General groups}.
To simplify the discussion we apply the (multiplicative) Jordan decomposition -- see \cite[\S4]{Borelbook}.
For each $\gamma_i$ write $\gamma_i = \gamma_i^s\gamma_i^u=\gamma_i^u\gamma_i^s$ where $\gamma_i^u$ is unipotent and $\gamma_i^s$ is semisimple.
Moreover, $\gamma_i^u,\gamma_i^s \in \Mbf(\Q)$ and, using the fact that $\gamma_i^u,\gamma_i^s$ may be expressed as polynomials in $\gamma_i$ depending on the coefficients of the characteristic polynomial, we have $\height(\gamma_i^u) \ll T^\star$ and $\height(\gamma_i^s) \ll T^\star$.

Consider the subgoup of $\Mbf$ defined as the Zariski closure of the group generated by all commutators $[\gamma_i,\gamma_j]$ and all unipotent elements $\gamma_i^u$. We claim it is equal to $\Mbf^{\mathcal H}$.
By what we have already proved this will show $\height(\Mbf^{\mathcal H})\ll T^{\star}$.

To prove this remaining claim, let $\mathcal{S}_1 = \{[\gamma_i,\gamma_j],\gamma_i^u\}$ and define inductively
\begin{align*}
\mathcal{S}_{\ell+1} = \mathcal{S}_\ell \cup \{[\gamma_i,s]: i \leq k,s \in \mathcal{S}_\ell\}.
\end{align*}
Moreover, we let $\Mbf_\ell$ be the Zariski-closure of the group generated by $\mathcal{S}_\ell$. 
By construction, $\Mbf_1 \subset \Mbf_2\subset \ldots$ and by Lemma~\ref{lem:bound on index} there exists $\ell$ bounded by some constant depending only on $N$ with $\Mbf_{\ell+1}= \Mbf_\ell$.
Then $\Mbf_\ell$ is normal in $\Mbf$ as we have 
\begin{align*}
\gamma_j s \gamma_j^{-1} = [\gamma_j,s]s^{-1} \in \Mbf_{\ell+1} = \Mbf_\ell.
\end{align*}
for any $s \in \mathcal{S}_\ell$ and any $j \leq k$.
By construction, $\Mbf_\ell$ is contained in $\Mbf^\mathcal{H}$ since any character on $\Mbf$ is trivial when restricted to $\Mbf_\ell$.
Moreover, note that the connected group $\Mbf/\Mbf_\ell$ is a torus. 
Indeed, it is abelian as $\gamma_i \gamma_j \Mbf_\ell = \gamma_j \gamma_i \Mbf_\ell$ for all $i,j$ and it is the Zariski closure of the group generated by the semisimple elements $\gamma_i\Mbf_\ell = \gamma_i^s\Mbf_\ell$.
This shows that $\Mbf_\ell = \Mbf^\mathcal{H}$. The proposition now follows from Case 1 applied to $\Mbf_\ell$.
\end{proof}

\section{Almost invariant Lie algebras}\label{sec:alm inv Liealgebra}

The following proposition shows that a Lie algebra which is `almost invariant' under the action of $G$ via the adjoint representation is close to an ideal.
Later in the section, the separation of ideals in the semisimple case will be used to further restrict the options for such a Lie algebra.

\begin{proposition}\label{prop:nearbyideal}
There exists $\consta\label{a:nearbyideal}>0$ and $\constc\label{c:nearbyideal}>1$ depending only on $N$ with the following property.
Let $\G < \SL_N$ be a $\Q$-subgroup of class $\mathcal{H}$.
Let $\hfrak < \gfrak$ be a subalgebra, let $R>0$, and $\delta\in (0,1)$. 
Suppose that $\gamma_1,\ldots,\gamma_k \in \Gamma$ satisfy the following:
\begin{enumerate}[(a)]
\item $\norm{\gamma_i} \leq R$ for all $i=1,\ldots,k$.
\item
For all $i=1,\ldots,k$ we have $\metric(\gamma_i.\hat{\vpz}_\hfrak,\hat{\vpz}_\hfrak) \leq \delta$.
\item  The group generated by $\gamma_1,\ldots,\gamma_k$ is Zariski-dense in $\G$.
\end{enumerate}
Then one of the following holds:
\begin{enumerate}
    \item [(1)] $\delta \geq \ref{c:nearbyideal} \height(\G)^{-\ref{a:nearbyideal}} R^{-\ref{a:nearbyideal}}$.
    \item [(2)] There exists a Lie ideal $\hfrak'\lhd \gfrak$ with $\dim(\hfrak)=\dim(\hfrak')$ and
\begin{align*}
\metric(\hat{\vpz}_{\hfrak},\hat{\vpz}_{\hfrak'})
\leq \delta^{1/\ref{a:nearbyideal}}.
\end{align*}
\end{enumerate}
\end{proposition}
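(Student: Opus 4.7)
The plan is to propagate the approximate invariance of the line $\R\vpz_\hfrak$ from the generators $\gamma_i$ to the whole group $\G$ via an effective Zariski-density argument, and then to extract a nearby honest ideal using a \L{}ojasiewicz-type inequality on the variety of Lie ideals in $\gfrak$. Throughout, $\star$ denotes a constant depending only on $N$.

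First I would set up a polynomial obstruction. Let $V = \wedge^{\dim\hfrak}\gfrak$ and consider the polynomial map $F\colon \SL_N \to V \wedge V$ given by $F(g) = \Ad(g)\vpz_\hfrak \wedge \vpz_\hfrak$, which has degree $\ll_N 1$ and vanishes precisely when $\Ad(g)$ preserves the line $\R\vpz_\hfrak$. Using \eqref{eq:Fubini-Study} together with $\norm{\gamma_i}\leq R$, the hypothesis $\metric(\gamma_i.\hat{\vpz}_\hfrak,\hat{\vpz}_\hfrak)\leq \delta$ translates to $\norm{F(\gamma_i)} \ll \delta R^\star$. The elementary bound $\norm{F(gh)}\leq \norm{\Ad(g)}\norm{F(h)} + \norm{\Ad(h)}\norm{F(g)}$, iterated along a word $w = \gamma_{i_1}^{\epsilon_1}\cdots \gamma_{i_\ell}^{\epsilon_\ell}$ of length at most some $\ell_0 = \ell_0(N)$, then yields $\norm{F(w)} \ll R^{\star}\delta$ on all such words.

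Next I would transfer this estimate from short words in $\gamma_i^{\pm 1}$ to all of $\G$. By Proposition~\ref{prop:heightifgenbylattice} (with $T\ll R$) and using $\G=\G^{\mathcal H}$ (since $\G \in \Hcal$), we have $\height(\G) \ll R^\star$. Combined with Lemma~\ref{lem:bounddegchevalleyH}, this realises $\G$ as a $\Q$-subvariety of $\SL_N$ cut out by polynomials of degree $\ll_N 1$ and integer coefficients of size $\ll R^\star$. Now invoke an effective form of Zariski density: one chooses $\ell_0 = \ell_0(N)$ large enough that there exist words $w_1,\dots,w_m$ of length $\leq \ell_0$ in $\gamma_i^{\pm 1}$ which are generic enough in $\G$ so that every polynomial $p$ on $\SL_N$ of degree $\ll_N 1$ satisfies $\sup_{g\in\G,\,\norm{g}\leq 2} \abs{p(g)} \ll R^\star \max_j \abs{p(w_j)}$. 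Applied coordinate-wise to $F$ this yields $\norm{F(g)} \ll R^\star\delta$ on $\{g\in\G:\norm{g}\leq 2\}$. Differentiating at the identity along a unit basis of $\gfrak$ then shows that $\ad(X)\vpz_\hfrak$ lies within $R^\star\delta$ of $\R\vpz_\hfrak$ for every unit $X\in\gfrak$, i.e.\ $\hfrak$ is nearly $\ad(\gfrak)$-invariant.

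Finally, the set $\Jcal$ of $\dim\hfrak$-dimensional $\ad(\gfrak)$-invariant subspaces of $\gfrak$ is an $\R$-algebraic subvariety of the Grassmannian $\mathrm{Gr}(\dim\hfrak,\gfrak)\subset \mathbb{P}(V)$, cut out by polynomial equations of degree $\ll_N 1$ whose coefficients are the Lie bracket structure constants of $\gfrak$; in an integral basis these are bounded by $\height(\G)^\star \ll R^\star$. The previous step says that these defining equations, evaluated at $\hat\vpz_\hfrak$, are all $\ll R^\star\delta$ in size. A \L{}ojasiewicz inequality for this bounded-degree, bounded-height real algebraic variety then produces an ideal $\hfrak'\in\Jcal$ with $\metric(\hat\vpz_\hfrak,\hat\vpz_{\hfrak'}) \ll (R^\star\delta)^{1/\star}$. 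Either $\delta \geq \ref{c:nearbyideal}\,\height(\G)^{-\ref{a:nearbyideal}} R^{-\ref{a:nearbyideal}}$, giving option~(1), or $\delta$ is small enough that the resulting $R^\star$ factors are absorbed into $\delta^{1/\ref{a:nearbyideal}}$ for $\ref{a:nearbyideal}$ taken sufficiently large, giving option~(2). I expect the hardest part of the argument to be the effective Zariski-density step in the previous paragraph: one must carefully bookkeep how the bounded degree of the defining equations of $\G$, the bound $\height(\G)\ll R^\star$, and the word-length propagation of Step~1 combine into a usable quantitative statement.
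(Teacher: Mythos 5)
Your route is genuinely different from the paper's, and the difference is instructive. The paper never upgrades the approximate invariance from the generators to all of $\G$ at the approximate level. Instead it applies the effective {\L}ojasiewicz inequality (Theorem~\ref{thm:effectiveLoj}) to the variety of tuples $(\wpz_1',\ldots,\wpz_h')\in\gfrak^h$ that span a subalgebra invariant under each of the \emph{finitely many} $\Ad(\gamma_i)$; the nearby exact solution is then a subalgebra invariant under every $\gamma_i$, and Zariski density of $\langle\gamma_1,\ldots,\gamma_k\rangle$ is invoked only \emph{qualitatively} at the very end to conclude that this exact subalgebra is $\Ad(\G)$-invariant, hence an ideal. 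You instead first prove an approximate statement for all of $\G$ (and then for $\ad(\gfrak)$ by differentiation) and only afterwards apply {\L}ojasiewicz to the variety of actual ideals in the Grassmannian. Both endgames work (note that in your version $\ad(\gfrak)$-invariance automatically forces the nearby subspace to be an ideal, so you rightly drop the bracket-closure equations the paper includes), but your ordering forces you to carry out an \emph{effective} Zariski-density step, which is exactly what you identify as the hardest part and what the paper's arrangement is designed to avoid.

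That interpolation inequality, $\sup_{g\in\G,\,\norm{g}\leq 2}|p(g)|\ll R^{\star}\max_j|p(w_j)|$ for all $p$ of degree $\ll_N 1$, is asserted rather than proved, and it is the crux of your argument. It is true, but it needs an argument: injectivity of $p\mapsto(p(w_j))_j$ modulo the ideal of $\G$ follows from the kernel-stabilization argument in the proof of Proposition~\ref{prop:heightifgenbylattice} (words of length at most $\dim V_d$ suffice, with $d$ from Lemma~\ref{lem:bounddegchevalleyH}), and the quantitative inverse bound $\ll R^{\star}$ then follows because the $w_j$ are \emph{integer} matrices of norm $\leq R^{\star}$, so every nonzero minor of the evaluation matrix is a nonzero integer combination bounded below by $R^{-\star}$ via Cramer's rule. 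Without integrality of the evaluation points this constant could not be controlled, so this point cannot be waved away. A second, smaller gap: $t\mapsto F(\exp(tX))$ is not a polynomial in $t$, so ``differentiating at the identity'' from a sup bound on $\{\norm{g}\leq 2\}$ requires a short additional argument (e.g.\ a divided-difference estimate using the Taylor expansion of $\euler^{t\,\ad X}$), which costs a power of $\delta$ --- harmless for the stated conclusion, but it should be said. With these two steps filled in, your proof goes through; as written, the pivotal quantitative input is missing.
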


The proof uses a variant of {\L}ojasiewiecz's inequality~\cite{Lojasiewicz}.
{\L}ojasiewiecz's inequality generally asserts that a point with a small value for a real analytic function must be close to its zero locus (see also \cite[Thm.~4.1]{Malgrange}).
Here, we shall use an effective version of this statement for polynomials (in view of the effective dependency on the height of $\G$ in Proposition~\ref{prop:nearbyideal}).
The height of a polynomial $f \in \Z[x_1,\ldots,x_n]$ is the maximum of its coefficients in absolute value.

\begin{theorem}[Solern\'o \cite{Solerno}]\label{thm:effectiveLoj}
For any $d \in \N$ there exists $A(d)>1$ with the following property.

Let $h>1$ and let $f_1,\ldots,f_r \in \Z[x_1,\ldots,x_n]$ have degree at most $d$ and height at most $h$.
Let $V\subset \R^n$ be the zero locus of $f_1,\ldots,f_r$.
Then for $w \in \R^n$
\begin{align*}
\min\{1,\metric(w,V)\} \ll_d (1+\norm{w}_\infty)^{A(d)} h^{A(d)} \max_{1 \leq i\leq r}|f_i(w)|^{1/A(d)},
\end{align*}
where $\metric(w,V)=\inf_{v\in V}\metric(w,v)$, so if $V=\emptyset$, then $\metric(w,V)=\infty$.
\end{theorem}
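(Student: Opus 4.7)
The plan is to establish the effective \L{}ojasiewicz inequality by first consolidating the system into one polynomial and reducing to a bounded box, and then exploiting the quantitative semi-algebraic structure of the resulting minimization problem. First I would set $F(w) = \sum_{i=1}^r f_i(w)^2$; this $F$ lies in $\Z[x_1,\ldots,x_n]$, has degree at most $2d$, height bounded polynomially in $h$ (with a combinatorial factor depending on $d$, $n$, $r$), and zero locus exactly $V$. Since $\max_i |f_i(w)| \geq \sqrt{F(w)/r}$, it suffices to prove the inequality with $\max_i |f_i(w)|$ replaced by $F(w)^{1/2}$. Next I would reduce to the case $\norm{w}_\infty \leq 1$: writing $R = \max\{1,\norm{w}_\infty\}$ and substituting $w = R\tilde w$ produces a polynomial $\tilde F(\tilde w) = F(R\tilde w)$ of the same degree whose coefficients are inflated by at most $R^{2d}$, so the $\norm{w}_\infty$ dependence is absorbed into the height by replacing $h$ with $h R^{2d}$.

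For the bounded-box case, I would study the semi-algebraic function
\[
\phi(\varepsilon) = \inf\bigl\{F(w) : w \in [-1,1]^n,\ \metric(w, V \cap [-1,1]^n) \geq \varepsilon\bigr\},
\]
which is non-decreasing with $\phi(0) = 0$ and $\phi(\varepsilon) > 0$ for $\varepsilon > 0$. The goal becomes $\phi(\varepsilon) \geq c\,\varepsilon^A$ for some exponent $A = A(d,n)$ and constant $c$ inverse-polynomial in the height $H$ of $F$. The key input is an effective form of the Tarski--Seidenberg theorem (as developed by Collins and refined by Grigoriev--Vorobjov and Renegar): the graph $\{(\varepsilon, y) : y \geq \phi(\varepsilon)\}$ is cut out by polynomial inequalities whose degrees are bounded by a function of $d$ and $n$ only, and whose coefficients are polynomial expressions of bounded degree in the coefficients of $F$. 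Applying the effective Puiseux theorem to the algebraic function $\phi$ near zero yields an expansion $\phi(\varepsilon) \sim c \varepsilon^q$ with $q \in \Q_{>0}$ of bounded denominator, and $|c|$ controlled below by an inverse-polynomial expression in $H$ via Cauchy's bound on roots of integer polynomials applied to the minimal polynomial of $c$.

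The main obstacle is the quantitative lower bound on $c$: the existence of the Puiseux expansion and the control on $q$ are standard outputs of effective semi-algebraic geometry, but extracting an explicit inverse-polynomial bound on $c$ in terms of $H$ requires careful tracking of coefficient sizes through quantifier elimination, or equivalently an effective Thom--Milnor analysis of the semi-algebraic set $\{F(w) \leq y,\ w \in [-1,1]^n\}$ and its cylindrical decomposition. Once this bound is secured, we obtain $\metric(w, V) \leq (F(w)/c)^{1/q}$ on the unit box; undoing the rescaling from the first paragraph absorbs $R = \max\{1,\norm{w}_\infty\}$ into a further polynomial factor in the height, and undoing the reduction $F = \sum f_i^2$ reinstates $\max_i |f_i(w)|$ with only a $\sqrt{r}$ loss. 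The final exponent $A(d)$ absorbs $q$, and the implicit constant absorbs $n$, $r$, and the combinatorial factors picked up along the way, yielding exactly the inequality in the statement.
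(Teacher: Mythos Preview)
Your proposal sketches a from-scratch proof of the effective \L{}ojasiewicz inequality. The paper does something far shorter: for $V\neq\emptyset$ it simply invokes Solern\'o's theorem directly (this is the cited result, not something reproved), and the only work done is the degenerate case $V=\emptyset$, handled by the auxiliary-variable trick of replacing $f_i$ by $yf_i$ so that the new zero locus is $\{y=0\}\subset\R^{n+1}$, and then applying Solern\'o at the point $(w,1)$.

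What you outline is in effect a roadmap to Solern\'o's theorem itself. The shape is reasonable --- reduce to one polynomial, rescale into a box, study the semi-algebraic germ of $\phi(\varepsilon)$ --- but the step you label ``the key input'' (effective Tarski--Seidenberg with polynomial height control through the elimination) is exactly where all the content of Solern\'o's paper lies. So you have not avoided citing a deep external result; you have just moved the citation. If the intent was to give a self-contained argument, that step is the gap.

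There is also a small technical slip in the box reduction: after rescaling so that $\tilde w\in[-1,1]^n$, you define $\phi$ using $\metric(w,V\cap[-1,1]^n)$, but the nearest point of (the rescaled) $V$ need not lie in the unit box. This is fixable --- since the target inequality only concerns $\min\{1,\metric(w,V)\}$, one may assume $\metric(w,V)\le 1$ and work in $[-2,2]^n$ --- but as written the definition of $\phi$ does not control the quantity you want. Finally, you do not address the case $V=\emptyset$ separately; your $\phi$ then reduces to a constant and the Puiseux analysis near $0$ is vacuous, so a direct lower bound on $\min_{[-1,1]^n}F$ in terms of the height is still needed (this is precisely what the paper's auxiliary-variable trick supplies).
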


\begin{proof}
When $V$ is non-empty, the statement follows directly from \cite[Thm.~7]{Solerno}.
So suppose that $V$ is empty. Let $y$ be an auxiliary variable and consider the variety $\Tilde{V} \subset \R^{n+1}$ cut out by the equations $yf_i(x_1,\ldots,x_n) = 0$. Since $V$ is empty, $\Tilde{V}$ is equal to the subspace $\{y=0\}$.
Applying \cite[Thm.~7]{Solerno} for the variety $\Tilde{V}$ and the point $(w,1) \in \R^{n+1}$ for $w \in \R^n$ we obtain 
\begin{align*}
1 = d((w,1),\Tilde{V}) \ll (1+\norm{w}_\infty)^{\star} h^{\star} \max_{1 \leq i\leq r}|f_i(w)|^{\star}
\end{align*}
and the theorem follows.
\end{proof}

The above theorem extends the work of Brownawell \cite{Brownawell88}, who proved the analogous result over $\C$.
We remark that it may also be established using Greenberg's approach \cite{Greenberg1,Greenberg2} as communicated privately to authors by W.~Kim and P.~Yang;  see also \cite[Appendix A]{LMMS} for a related discussion.

\begin{proof}[Proof of Proposition~\ref{prop:nearbyideal}]
Fix an orthonormal basis $\wpz_1,\ldots,\wpz_h$ of $\hfrak$ with respect to a Euclidean norm on $\mathfrak{sl}_N(\R)$. 
By the assumption (a) of the proposition, we have 
\begin{align*}
\max(\norm{\Ad(\gamma_i)},\norm{\Ad(\gamma_i)^{-1}})\ll|\gamma_i|^\ast \ll R^\star.
\end{align*}
Using the assumption (b) and the definition of the Fubini-Study metric (see \eqref{eq:Fubini-Study}), we have for every $i \leq k$ and $\ell \leq h$
\begin{equation}\label{eq:f(w)}
\begin{split}
\norm{\Ad(\gamma_i)\wpz_\ell \wedge \wpz_1 \wedge\ldots \wedge \wpz_h}
&\ll R^\star \Big\| \frac{\Ad(\gamma_i)\wpz_\ell}{\norm{\Ad(\gamma_i)\wpz_\ell}}\wedge \wpz_1 \wedge\ldots \wedge \wpz_h \Big\| \\
&\ll R^\star \delta.
\end{split}
\end{equation}
We apply Theorem~\ref{thm:effectiveLoj} to the variety $V$ of tuples $(\wpz_1',\ldots,\wpz_h')\in \gfrak^h$ with $\Ad(\gamma_i)\wpz_\ell' \wedge \wpz_1' \wedge\ldots \wedge \wpz_h' = 0$ for all $i,\ell$ and $[\wpz_{\ell_1}',\wpz_{\ell_2}'] \wedge \wpz_1' \wedge\ldots \wedge \wpz_h' = 0$ for all $\ell_1,\ell_2$.
Therefore, from \eqref{eq:f(w)}, and since $[\wpz_{\ell_1},\wpz_{\ell_2}] \wedge \wpz_1 \wedge\ldots \wedge \wpz_h = 0$ for all $\ell_1,\ell_2$, we get 
\begin{align*}
\min\{1,\metric((\wpz_1,\ldots,\wpz_h),V)\}
\ll \height(\G)^\star R^\star \delta^\star.
\end{align*}

Suppose that Option~(1) of the conclusion of the proposition does not hold.  Then $\delta \leq \height(\G)^{-A} R^{-A}/C$ for some large $A,C>0$ and, in particular,
\[
\metric((\wpz_1,\ldots,\wpz_h),V)\leq \delta^\star.
\]
Thus, there exists $(\wpz_1',\ldots,\wpz_h') \in V$ with $\norm{\wpz_\ell-\wpz_\ell'} \leq \delta^\star$ for every $\ell$.
By construction, the subalgebra $\hfrak'$ spanned by $\wpz_1',\ldots,\wpz_h'$ is $\Ad(\gamma_i)$-invariant for every $i$ and hence, by the Zariski-density assumption~(c) of the proposition, $\hfrak'$ is $\Ad(\G)$-invariant. In other words, $\hfrak'$ is a Lie ideal, and Option~(2) of the conclusion of the proposition follows.  
\end{proof}

The following lemma asserts that in the semisimple case, a Lie algebra cannot be too close to an ideal unless it is an ideal itself. 

\begin{lemma}\label{lem:insemisimpleidealsrepel}
There exist $\consta\label{a:semisimplenearbyideal}>0$ and $\constc\label{c:semisimplenearbyideal}>0$ depending only on $N$ with the following property.
Suppose that $\gfrak$ is semisimple and that $\hfrak$ is a Lie subalgebra of $\gfrak$ for which there exists a Lie ideal $\hfrak'\lhd \gfrak$ with $\dim(\hfrak)=\dim(\hfrak')$ and 
\begin{align*}
    \metric(\hat{\vpz}_{\hfrak}, \hat{\vpz}_{\hfrak'}) \leq  \ref{c:semisimplenearbyideal}\height(\G)^{-\ref{a:semisimplenearbyideal}}.
\end{align*}
Then $\hfrak=\hfrak'$ and, in particular, $\hfrak$ is a Lie ideal.
\end{lemma}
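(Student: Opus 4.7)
The plan is to exploit the rigidity of ideals in semisimple Lie algebras via a bootstrap on perfectness. As $\gfrak$ is semisimple, any ideal $\hfrak'\lhd \gfrak$ is itself semisimple (being a sum of simple factors of $\gfrak$) and admits a Killing-orthogonal complementary ideal $\kfrak$ with $\gfrak=\hfrak'\oplus \kfrak$ and $[\hfrak',\kfrak]=0$.  For a subspace $\hfrak$ of $\gfrak$ of the same dimension as $\hfrak'$ and sufficiently close to $\hfrak'$ in the Grassmannian, I would parametrize it as the graph $\hfrak=\{X+F(X):X\in\hfrak'\}$ of a linear map $F\colon \hfrak'\to\kfrak$. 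Picking an orthonormal basis of $\hfrak'$ of bounded complexity (obtained from an integral basis of $\gfrak(\Z)$ of norm $\ll \height(\G)^\star$ via Minkowski's theorem, followed by projecting with the Killing form), \eqref{eq:Fubini-Study} gives
\[
\|F\|_{\mathrm{op}}\ll \height(\G)^\star\cdot \metric(\hat\vpz_\hfrak,\hat\vpz_{\hfrak'}).
\]

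Next, I would collapse the Lie-subalgebra condition on $\hfrak$ using $[\hfrak',\kfrak]=0$: for $X,Y\in\hfrak'$,
\[
[X+F(X),Y+F(Y)]=[X,Y]+[F(X),F(Y)]\in\hfrak,
\]
with $[X,Y]\in\hfrak'$ and $[F(X),F(Y)]\in\kfrak$; matching with the graph description forces
\[
F([X,Y])=[F(X),F(Y)].
\]
That is, $F$ is a Lie algebra homomorphism $\hfrak'\to\kfrak$.  The perfectness of $\hfrak'$ then yields the key claim: every unit $W\in\hfrak'$ admits a presentation $W=\sum_i c_i[X_i,Y_i]$ with $X_i,Y_i\in\hfrak'$, $|c_i|,\|X_i\|,\|Y_i\|\leq \height(\G)^\star$ for all $i$, and boundedly many terms.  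Applying the homomorphism property gives $\|F(W)\|\leq \height(\G)^\star\|F\|^2$, whence $\|F\|\leq \height(\G)^\star\|F\|^2$, so either $F=0$ or $\|F\|\gg \height(\G)^{-\star}$.  Choosing $\ref{c:semisimplenearbyideal}$ and $\ref{a:semisimplenearbyideal}$ so that the hypothesis $\metric(\hat\vpz_\hfrak,\hat\vpz_{\hfrak'})\leq \ref{c:semisimplenearbyideal}\height(\G)^{-\ref{a:semisimplenearbyideal}}$ rules out the latter alternative forces $F=0$ and hence $\hfrak=\hfrak'$.

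The hard part will be the quantitative bracket presentation above with constants polynomial in $\height(\G)$.  The strategy is to start from an integral basis of $\gfrak(\Z)$ of norm $\ll \height(\G)^\star$, use the Killing form — a $\Q$-rational nondegenerate symmetric bilinear form on $\gfrak$ whose matrix in this basis is integral of bounded height — to extract the Killing projection onto $\hfrak'$ and thereby a basis of $\hfrak'$ with controlled norms, and finally invert the bracket surjection $\hfrak'\otimes\hfrak'\twoheadrightarrow\hfrak'$ (surjective by perfectness) via Cramer's rule on a linear system of bounded height.  Since the ideals of $\gfrak$ are finite in number (subsets of its simple factors), the desired uniformity in $\height(\G)$ is then obtained.
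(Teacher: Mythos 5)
Your argument shares its structural core with the paper's proof: both decompose $\gfrak=\hfrak'\oplus\kfrak$ for a complementary ideal, realize $\hfrak$ as the graph of a linear map $F\colon\hfrak'\to\kfrak$ which (because $[\hfrak',\kfrak]\subset\hfrak'\cap\kfrak=0$) is automatically a Lie algebra homomorphism of operator norm controlled by the Grassmannian distance, and then argue that $F=0$ once that norm is small. The two proofs diverge afterwards. The paper first conjugates $\gfrak$ into one of finitely many standard positions by an element of height $\ll\height(\G)^\star$ (via \cite{SG-superapproxI}), which removes the height dependence entirely; it then kills $F$ by an eigenvalue argument: on a simple factor of $\hfrak'\otimes\C$ where $F$ is injective, any eigenvalue $\lambda$ of $\ad(\wpz)$ satisfies $|\lambda|\ll\norm{F(\wpz)}$, contradicting a uniform lower bound coming from the finiteness of the standard forms. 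You instead keep $\height(\G)$ explicit throughout and exploit perfectness of $\hfrak'$ to obtain $\norm{F}\le\height(\G)^\star\norm{F}^2$, so that either $F=0$ or $\norm{F}\gg\height(\G)^{-\star}$. Your final step is more elementary and self-contained than the eigenvalue argument, and the overall logic is correct.

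The one genuinely incomplete point is the quantitative input you yourself flag: an orthonormal (or well-conditioned) basis of $\hfrak'$ and a bracket presentation $W=\sum_i c_i[X_i,Y_i]$ with all data polynomial in $\height(\G)$. The obstacle is that $\hfrak'$, being an arbitrary sum of $\R$-simple factors of $\gfrak$, need not be defined over $\Q$ (think of $\G$ a restriction of scalars from a real quadratic field, whose two real factors are Galois-conjugate), so ``integral basis plus Cramer's rule with a nonzero integer determinant'' does not apply verbatim; one needs either a Liouville-type lower bound on an algebraic determinant of bounded degree and controlled height, or an effective statement locating the simple ideals of $\gfrak$ at height $\ll\height(\G)^\star$. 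The same issue affects your bound $\norm{F}_{\mathrm{op}}\ll\height(\G)^\star\metric(\hat{\vpz}_{\hfrak},\hat{\vpz}_{\hfrak'})$, which requires controlling the angle between $\hfrak'$ and $\kfrak$ in the ambient Euclidean metric. Your closing appeal to the finiteness of the set of ideals of a fixed $\gfrak$ does not by itself give uniformity over all $\G<\SL_N$. This is precisely the step the paper short-circuits by conjugating to finitely many standard forms first; grafting that reduction onto your argument (or supplying the height bounds on the simple ideals directly) closes the gap.
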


\begin{proof}
We prove the analogous stronger statement for complex subalgebras of $\gfrak \otimes \C$.

We begin by bringing $\gfrak$ into a `simpler' form (similarly to the proof of Lemma~\ref{lem:bounddegchevalleyH}).
By the representation theory of semisimple Lie algebras (see e.g.~\cite{Jacobson-Liealgebras}), there are finitely many $\SL_N(\overline{\Q})$-conjugacy classes of embeddings of any semisimple $\overline{\Q}$-Lie algebra into $\mathfrak{sl}_N$.
Also, there are finitely many $\overline{\Q}$-isomorphism classes of semisimple Lie algebras that embed into $\mathfrak{sl}_N$.
In particular, there exists a finite list
$\gfrak_1,\ldots,\gfrak_k$ of subalgebras of $\mathfrak{sl}_N$ (which can be taken defined over $\Q$) such that any Lie algebra $\gfrak$ as in the lemma is conjugate to some $\gfrak_i$ over $\overline{\Q}$.
The variety
\begin{align*}
\{g \in \SL_N: \Ad(g)\gfrak = \gfrak_i\}
\end{align*}
is hence non-empty. In view of \cite[Prop.~65]{SG-superapproxI} there must exist $g \in \SL_N(\overline{\Q})$ of height $\ll \height(\G)^\star$ such that $\Ad(g)\gfrak = \gfrak_i$.
In particular, $|g|\ll \height(\G)^\star$.
Considering the Lie algebra $\Ad(g)\hfrak$ the above discussion hence reduces the lemma to the case $\gfrak = \gfrak_i$ and we may ignore the dependency on the height of $\G$ from now on.

Now suppose that $\metric(\hat{\vpz}_{\hfrak}, \hat{\vpz}_{\hfrak'})\leq \delta$ for some $\delta>0$.
Write $\gfrak = \hfrak' \oplus \hfrak''$ for another ideal $\hfrak''$ and $\pi'$, $\pi''$ for the respective projections onto $\hfrak',\hfrak''$.
We have for any $\wpz \in \hfrak$
\begin{align*}
\norm{\pi''(\wpz)} = \norm{\pi'(\wpz)-\wpz}\ll \delta^\star \norm{\wpz}
\end{align*}
by assumption.
In particular, $\pi'|_{\hfrak}$ is an isomorphism for $\delta$ sufficiently small.
The Lie algebra homomorphism $\phi = \pi'' \circ (\pi'|_{\hfrak})^{-1}: \hfrak' \to \hfrak''$ then satisfies $\norm{\phi(\wpz)}\ll \delta^\star \norm{\wpz}$.
We wish to show that $\phi = 0$ unless $\delta $ is not small.

So suppose there is a simple factor $\sfrak$ of $\hfrak' \otimes \C$ with $\phi|_{\sfrak} \neq 0$.
In particular, $\phi|_{\sfrak}$ is injective.
Let $\wpz \in \sfrak$ be any unit vector, $\lambda$ an eigenvalue for the $\ad(\wpz)$-action on $\sfrak$, and $\upz \in \sfrak$ non-zero with $[\wpz,\upz] = \lambda \upz$.
Then
\begin{align*}
|\lambda|\norm{\phi(\upz)} = \big\|[\phi(\wpz),\phi(\upz)]\big\| \leq \norm{\phi(\wpz)}\norm{\phi(\upz)}
\end{align*}
and so $|\lambda| \ll \delta^\star$ since $\phi(\upz) \neq 0$.
Since there are only finitely many ideals $\sfrak$, this is a contradiction if $\delta$ is sufficiently small.
\end{proof}

The following upgrades Proposition~\ref{prop:nearbyideal} to the situation considered in Theorem~\ref{thm:main}.

\begin{proposition}\label{prop:nearbyideal2}
There exists $\consta\label{a:nearbyideal2}>0$ and $\constc\label{c:nearbyideal2}>1$ depending only on $N$ with the following property.
Let $\G < \SL_N$ be a perfect $\Q$-subgroup.
Let $\hfrak < \gfrak$ be a proper subalgebra such that $\hfrak + \mathrm{rad}(\gfrak)$ is not a proper Lie ideal of $\gfrak$.
Let $R\geq 1$, and $\delta\in (0,1)$. 
Suppose that $\gamma_1,\ldots,\gamma_k \in \Gamma$ satisfy the following:
\begin{enumerate}[(a)]
\item $\norm{\gamma_i} \leq R$ for all $i=1,\ldots,k$.
\item\label{item:alminvLie2} For all $i=1,\ldots,k$ we have $\metric(\gamma_i.\hat{\vpz}_\hfrak,\hat{\vpz}_\hfrak) \leq \delta$.
\item The group generated by $\gamma_1,\ldots,\gamma_k$ is Zariski-dense in $\G$.
\end{enumerate}
Then $\delta \geq \ref{c:nearbyideal2} \height(\G)^{-\ref{a:nearbyideal2}} R^{-\ref{a:nearbyideal2}}$.
\end{proposition}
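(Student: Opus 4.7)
The plan is to apply Proposition~\ref{prop:nearbyideal} and then use the perfectness of $\G$ together with the hypothesis that $\hfrak + \mathrm{rad}(\gfrak)$ is not a proper ideal to eliminate the "nearby ideal" alternative whenever $\delta$ is below a polynomial threshold in $\height(\G)^{-1}$ and $R^{-1}$; this leaves only the desired lower bound on $\delta$. So suppose $\delta$ is small and Proposition~\ref{prop:nearbyideal} supplies a Lie ideal $\hfrak' \lhd \gfrak$ with $\dim\hfrak'=\dim\hfrak$ and $\metric(\hat{\vpz}_\hfrak,\hat{\vpz}_{\hfrak'})\leq \delta^{1/\ref{a:nearbyideal}}$. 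Let $\pi\colon \gfrak \to \sfrak := \gfrak/\mathrm{rad}(\gfrak)$ be the projection to the semisimple quotient and set $\bar{\hfrak}:=\pi(\hfrak)$.

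The key dichotomy comes from $\hfrak' + \mathrm{rad}(\gfrak)$, which is itself an ideal of $\gfrak$. If it equals $\gfrak$, then $\gfrak/\hfrak' \cong \mathrm{rad}(\gfrak)/(\mathrm{rad}(\gfrak)\cap \hfrak')$ is both solvable (as a quotient of the solvable $\mathrm{rad}(\gfrak)$) and perfect (as a quotient of the perfect $\gfrak$), hence trivial, which would give $\hfrak'=\gfrak$ and contradict $\dim\hfrak'<\dim\gfrak$. Therefore $\bar{\Ibf}:=\pi(\hfrak')$ is a proper ideal of $\sfrak$, and the Grassmannian closeness of $\hfrak,\hfrak'$ together with the boundedness of $\pi$ in terms of $\height(\G)$ imply that every unit vector of $\bar{\hfrak}$ lies within $O(\height(\G)^\star\delta^{1/\ref{a:nearbyideal}})$ of $\bar{\Ibf}$ in $\sfrak$.

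If $\bar{\hfrak}=\sfrak$, then all of $\sfrak$ lies in that neighborhood of $\bar{\Ibf}$; but writing $\sfrak = \bar{\Ibf}\oplus \bar{\Ibf}^c$ as the canonical direct sum of semisimple ideals, any unit vector of $\bar{\Ibf}^c$ has distance $1$ from $\bar{\Ibf}$, which immediately forces the desired polynomial lower bound on $\delta$. Otherwise $\bar{\hfrak}\subsetneq \sfrak$, in which case the hypothesis says $\bar{\hfrak}$ is not a Lie ideal of $\sfrak$, and the task is to derive a contradiction from this together with the near-containment of $\bar{\hfrak}$ in $\bar{\Ibf}$.

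For this main case, I use that $\bar{\Ibf}$ and $\bar{\Ibf}^c$ commute in $\sfrak$, so the orthogonal projection $p\colon \sfrak\to \bar{\Ibf}$ is a Lie algebra homomorphism; the small $\bar{\Ibf}^c$-component of every vector in $\bar{\hfrak}$ then identifies $\bar{\hfrak}$ with the graph of a small Lie algebra homomorphism from the Lie subalgebra $W:=p(\bar{\hfrak})\subset \bar{\Ibf}$ to $\bar{\Ibf}^c$, with $\dim W = \dim \bar{\hfrak}$. Transferring the almost-invariance of $\hfrak$ through the successive projections $\pi$ and $p$ makes $W$ almost invariant under the Zariski-dense image of the generators in the semisimple Lie algebra $\bar{\Ibf}$; Proposition~\ref{prop:nearbyideal} applied inside $\bar{\Ibf}$, followed by Lemma~\ref{lem:insemisimpleidealsrepel}, then forces $W$ to be an ideal of $\bar{\Ibf}$ and hence of $\sfrak$. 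A final application of Lemma~\ref{lem:insemisimpleidealsrepel} inside $\sfrak$ to the close same-dimension pair $(\bar{\hfrak},W)$ yields $\bar{\hfrak}=W$, an ideal of $\sfrak$, contradicting the hypothesis. The hard part will be making the two transfers of almost-invariance (through $\pi$ and through $p$) quantitative with only polynomial loss in $\delta$, $R$, and $\height(\G)$, which requires controlling the Lipschitz constants of these projections on the strata of the relevant Grassmannians where the Plücker dimension of the image stays constant.
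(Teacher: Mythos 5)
Your opening coincides with the paper's: apply Proposition~\ref{prop:nearbyideal} to produce a Lie ideal $\hfrak'$ of the same dimension within $\delta^{1/\ref{a:nearbyideal}}$ of $\hfrak$, and your observation that $\hfrak'+\mathrm{rad}(\gfrak)=\gfrak$ is impossible (the quotient $\gfrak/\hfrak'$ would be solvable and perfect, hence trivial) is exactly the content of the paper's Lemma~\ref{lem:idealscontainingLevi}, just rederived and invoked earlier. The endgame, however, is genuinely different. The paper applies Lemma~\ref{lem:insemisimpleidealsrepel} \emph{once}, to the pair of projections $(\pi(\hfrak),\pi(\hfrak'))$ in $\sfrak=\gfrak/\mathrm{rad}(\gfrak)$, concludes that they coincide unless $\delta\gg\height(\G)^{-\star}$, and is then essentially done: $\pi(\hfrak')$ is an ideal of $\sfrak$, so $\hfrak+\mathrm{rad}(\gfrak)=\pi^{-1}(\pi(\hfrak'))$ is an ideal of $\gfrak$, which by hypothesis cannot be proper; hence $\hfrak'$ contains a Levi subalgebra, Lemma~\ref{lem:idealscontainingLevi} gives $\hfrak'=\gfrak$, and $\dim\hfrak'=\dim\hfrak<\dim\gfrak$ is contradicted. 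Your case split and, in the main case, the graph-of-a-small-homomorphism construction over $W=p(\bar\hfrak)$, a second application of Proposition~\ref{prop:nearbyideal} inside $\bar{\Ibf}$, and two applications of Lemma~\ref{lem:insemisimpleidealsrepel} reach the same contradiction at considerably greater cost. What your detour buys is that it avoids the implicit assumption $\dim\pi(\hfrak)=\dim\pi(\hfrak')$ that the paper makes when comparing the projections; the price is the two extra transfers you acknowledge at the end.

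Two of the steps you defer deserve a warning. First, the quantitative transfer of Grassmannian closeness through a linear projection is not merely a Lipschitz estimate: close subspaces can have projections of different dimensions, so one must argue (as the paper implicitly does for $\pi$) using the specific structure of $\hfrak'$ as an ideal; you need this twice. Second, and more seriously, Proposition~\ref{prop:nearbyideal} cannot simply be ``applied inside $\bar{\Ibf}$'': its proof rests on Solern\'o's effective {\L}ojasiewicz inequality for \emph{integral} polynomials of controlled height, whereas $\bar{\Ibf}=\pi(\hfrak')$ is a priori only a real ideal with no rational structure. To make that step work you would have to first put $\sfrak$ and its ideals into a standard form of controlled height (as in the proof of Lemma~\ref{lem:insemisimpleidealsrepel}, using that ideals of $\sfrak$ are sums of finitely many simple factors). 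Since a single application of Lemma~\ref{lem:insemisimpleidealsrepel} to the projected pair already closes the argument, I would recommend replacing your main case with that shortcut.
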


We use the following observation about perfect groups to prove the proposition.

\begin{lemma}\label{lem:idealscontainingLevi}
Suppose that $\gfrak$ is perfect and that $\hfrak \lhd \gfrak$ is a Lie ideal which contains a Levi subalgebra of $\gfrak$. Then $\hfrak= \gfrak$.
\end{lemma}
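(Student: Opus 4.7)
The plan is to pass to the quotient $\gfrak/\hfrak$ and to argue that this quotient is simultaneously perfect (hence derived from $\gfrak$) and solvable (because the Levi part is killed off), forcing it to be zero.

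More concretely, I would proceed as follows. Write $\gfrak = \sfrak \oplus \mathrm{rad}(\gfrak)$, where $\sfrak$ is a Levi subalgebra of $\gfrak$. By hypothesis we may assume $\sfrak \subset \hfrak$. Since $\hfrak$ is an ideal, the quotient $\gfrak/\hfrak$ is a Lie algebra, and the map $\gfrak \twoheadrightarrow \gfrak/\hfrak$ is a Lie algebra homomorphism. The composition $\sfrak \hookrightarrow \gfrak \twoheadrightarrow \gfrak/\hfrak$ is identically zero, so the restriction of the quotient map to $\mathrm{rad}(\gfrak)$ is already surjective; in particular $\gfrak/\hfrak$ is a homomorphic image of the solvable Lie algebra $\mathrm{rad}(\gfrak)$ and is therefore solvable.

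On the other hand, since $\gfrak$ is perfect we have $\gfrak/\hfrak = [\gfrak,\gfrak]/\hfrak \subseteq [\gfrak/\hfrak,\gfrak/\hfrak]$, so $\gfrak/\hfrak$ is perfect as well. The derived series of a solvable Lie algebra reaches $0$, whereas the derived series of a perfect Lie algebra is constant; hence $\gfrak/\hfrak=0$, i.e.\ $\hfrak = \gfrak$.

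There is no real obstacle here: the only thing to double-check is that $\gfrak/\hfrak$ really is isomorphic (as a Lie algebra, not merely as a vector space) to $\mathrm{rad}(\gfrak)/(\mathrm{rad}(\gfrak)\cap \hfrak)$, which is immediate from the fact that $\sfrak \subset \hfrak$ together with $\gfrak = \sfrak + \mathrm{rad}(\gfrak)$. Thus the entire argument is a short diagram chase relying on the elementary fact that a perfect solvable Lie algebra is trivial.
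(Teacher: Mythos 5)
Your proof is correct. The paper argues differently, though in the same spirit: it takes the lower central series $\ufrak_0 = \mathrm{rad}(\gfrak) \rhd \ufrak_1 \rhd \cdots$ of the (nilpotent, since $\G\in\mathcal{H}$) radical and shows by induction that $\hfrak + \ufrak_i = \gfrak$ for every $i$, using perfectness and the ideal property in the form $\gfrak = [\gfrak,\gfrak] = [\hfrak,\gfrak] + [\ufrak_i,\hfrak] + [\ufrak_i,\ufrak_i] \subset \hfrak + \ufrak_{i+1}$; since the series terminates, $\hfrak = \gfrak$. Your version replaces this explicit induction inside $\gfrak$ by passing to the quotient $\gfrak/\hfrak$, observing that it is both perfect (as a quotient of a perfect algebra) and solvable (as a homomorphic image of $\mathrm{rad}(\gfrak)$, once the Levi part is absorbed into $\hfrak$), and invoking the triviality of perfect solvable Lie algebras --- which is essentially the statement whose proof the paper's induction unrolls, adapted to the lower central rather than the derived series. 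Your argument is slightly cleaner and marginally more general, in that it needs only solvability rather than nilpotency of the radical; the paper's is self-contained at the level of bracket computations. Both are complete.
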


\begin{proof}
Let $\ufrak_0 = \mathrm{rad}(\gfrak) \rhd \ufrak_1 = [\ufrak_0,\ufrak_0] \rhd \ufrak_2 = [\ufrak_0,\ufrak_1]\rhd \ldots$ be the
lower central series of the (nilpotent) radical of $\gfrak$. 
As $\hfrak$ contains a Levi subalgebra, $\hfrak + \ufrak_0 = \gfrak$. 
Moreover, if $\hfrak +\ufrak_i = \gfrak$ then as $\hfrak$ is an ideal
\begin{align*}
\gfrak = [\gfrak,\gfrak] = [\hfrak,\gfrak] + [\ufrak_i,\hfrak]+[\ufrak_i,\ufrak_i] \subset \hfrak + \ufrak_{i+1}
\end{align*}
and so the claim follows by induction.
\end{proof}

\begin{proof}[Proof of Proposition~\ref{prop:nearbyideal2}]
By Proposition~\ref{prop:nearbyideal} we may either reach the desired conclusion, or there exists a Lie ideal $\hfrak' \lhd \gfrak$ with $\dim(\hfrak') = \dim(\hfrak)$ and $\metric(\hat{\vpz}_{\hfrak},\hat{\vpz}_{\hfrak'})
\leq \delta^{1/\ref{a:nearbyideal}}$.
In particular, the projections of $\hfrak,\hfrak'$ to $\gfrak/\mathrm{rad}(\gfrak)$ are at distance $\ll \height(\G)^\star\delta^{\star}$.
In view of Lemma~\ref{lem:insemisimpleidealsrepel} either $\delta \gg \height(\G)^{-\star}$ (in which case we again conclude) or the projections of $\hfrak,\hfrak'$ to $\gfrak/\mathrm{rad}(\gfrak)$ agree.
In the latter case, by assumption on $\hfrak$ the projection of $\hfrak$ is not a proper ideal and so $\hfrak$ and $\hfrak'$ surject onto $\gfrak/\mathrm{rad}(\gfrak)$. In particular, $\hfrak'$ contains a Levi subalgebra. 
By Lemma~\ref{lem:idealscontainingLevi} we have $\hfrak' = \gfrak$ and so $\hfrak = \gfrak$ which is a contradiction.
\end{proof}

\section{Proof of Theorem~\ref{thm:main}}\label{sec:proofmain}

In the following, we let $\tau \in (0,1)$ and let $ x=\Gamma g \in X_\tau$.
Furthermore, we fix $k \in \N$ and $R\geq 1$, where we assume that $R< \euler^{k}$ and $R>C \tau^{-A}$ for some large constants $C,A$ to be determined in the course of the proof, where $A$ is allowed to depend on $N$ and $C$ is allowed to depend on $N,G,\Gamma$.
Moreover, a proper Lie subalgebra $\hfrak<\gfrak$ is given with the property that $\hfrak+\mathrm{rad}(\gfrak)$ is not a proper Lie ideal of $\gfrak$.

We suppose that there exists a measurable subset $\mathcal{E} \subset B_U(e)$ with the following:
\begin{enumerate}[label={\color{red}(A\arabic*)}]
\item\label{item:assumptionE1} $|\mathcal{E}|>R^{-\kappa}$.
\item\label{item:assumptionE2} For any $u,w\in \mathcal{E}$ there exists $\gamma \in \Gamma$ with 
\begin{gather*}
\norm{\lambda_k(w)^{-1}g^{-1}\gamma g\lambda_k(u)}\leq R^{\kappa},\\
\metric(\lambda_k(w)^{-1}g^{-1}\gamma g\lambda_k(u). \hat{\vpz}_\hfrak, \hat{\vpz}_\hfrak)\leq R^{-1}.
\end{gather*}
\end{enumerate}
Here, a small $\kappa>0$, depending only on $N$, will be determined during the proof; and we will choose $A$ large enough so that $\kappa A\geq 2k_0$, and hence $R^\kappa>\euler^{2k_0}$, where $k_0$ is as in \eqref{eq:contpropexpmap}.

Throughout the proof, we will use the notation from the introduction.

The main step in the proof of Theorem~\ref{thm:main} (namely Proposition~\ref{prop:mainstep} below) consists of showing that there is a `large' set of points in $B_U(e)$ for which the associated point along the orbit through $x$ is not `Diophantine' (cf.~\cite[Def.~3.1]{LMMS}) in a specific sense.
We then apply the main theorem of \cite{LMMS}.

\subsection{Visits to cusp neighborhoods}
We first shrink $\mathcal{E}$ slightly to control the height in the cusp.
To that end, one wishes to apply non-divergence results for unipotent flows --- see for instance the papers \cite{Dani-Nondiv-2,DM-Linearization,KM-Nondiv}.
We will use the following effective version of the non-divergence theorem proven in \cite{LMMS}.

\begin{theorem}[{\cite[Theorem~6.3]{LMMS}}]\label{thm:nondiv}
There exist a constant $\consta\label{a:nondiv}>0$ depending on $N$ and a constant $\constE\label{c:nondiv}>0$ depending polynomially on $\height(\G)$ with the following property.

For any $g \in G$, $\eta \in (0,1/2)$, and $k \geq 1$ at least one the following holds:
\begin{enumerate}
\item We have
\begin{align*}
|\{u \in B_U(e): \Gamma g\lambda_k(u) \not\in X_\eta\}| \leq \ref{c:nondiv} \eta^{1/\ref{a:nondiv}}.
\end{align*}
\item There exists a non-trivial proper subgroup $\Mbf \in \mathcal{H}$ such that for all ${u \in B_U(e)}$
\begin{align*}
\norm{\eta_M(g \lambda_k(u)} &\leq \ref{c:nondiv} |g|^{\ref{a:nondiv}} \eta^{1/\ref{a:nondiv}},\\
\max_{\zpz \in \mathcal{B}_U}\norm{\zpz \wedge \eta_M(g \lambda_k(u)}  &\leq \ref{c:nondiv} |g|^{\ref{a:nondiv}} \eta^{1/\ref{a:nondiv}} \euler^{-k/\ref{a:nondiv}}.
\end{align*}
Moreover, $\Mbf = \Nbf_\Ubf^\mathcal{H}$ for some unipotent subgroup $\Ubf < \G$ where $\Nbf_\Ubf$ is the normalizer of $\Ubf$.
\end{enumerate}
\end{theorem}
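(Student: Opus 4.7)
The plan is to adapt the Kleinbock--Margulis quantitative non-divergence scheme to the family of expanding translations $u\mapsto g\lambda_k(u)$, and to read the improved decay rate $\euler^{-k/\ref{a:nondiv}}$ directly off the spectral expansion of $\lambda$. By Mahler's compactness criterion applied to the adjoint orbit (equivalently, by the very definition of $X_\eta$), the event $\Gamma g\lambda_k(u)\notin X_\eta$ is equivalent to the existence of a nonzero integer vector $\vpz\in\gfrak(\Z)$ with $\|\Ad(g\lambda_k(u))^{-1}\vpz\|<\eta$. The first step is to group such vectors according to the smallest class-$\mathcal{H}$ subgroup ``explaining'' them: to each $\vpz$ one attaches the algebraic hull of the one-parameter subgroup it generates together with the operation $\Mbf\mapsto \Nbf_{\Mbf}^{\mathcal{H}}$ from \cite{LMMS}; this reduces the problem to simultaneously controlling the Chevalley vectors $\eta_M(g\lambda_k(u))$ for proper $\Mbf\in\mathcal{H}$.

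For each such $\Mbf$, the map $u\mapsto \eta_M(g\lambda_k(u))$ is polynomial in $u\in B_U(e)$ of degree bounded in $N$ by Proposition~\ref{prop:chevalley}, so its coordinates are $(C,\alpha)$-good in the sense of Kleinbock--Margulis with constants depending only on $N$. The core dichotomy runs as follows: either for every proper $\Mbf\in\mathcal{H}$ of the form $\Nbf_\Ubf^{\mathcal{H}}$ the supremum of $\|\eta_M(g\lambda_k(u))\|$ over $u\in B_U(e)$ exceeds a carefully chosen threshold $\rho$ (comparable to a small power of $\eta$), in which case the standard $(C,\alpha)$-good covering argument across the finitely many relevant Chevalley representations produces the measure bound in option~(1); or some such $\Mbf$ fails this lower bound everywhere on $B_U(e)$ and becomes the candidate for option~(2).

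To upgrade this candidate to the precise bounds in option~(2), in particular to extract the factor $\euler^{-k/\ref{a:nondiv}}$, I would exploit the spectral expansion of $\lambda$ together with property~\eqref{eq:contpropexpmap}. If $\|\eta_M(g\lambda_k(u))\|\leq \ref{c:nondiv}|g|^{\ref{a:nondiv}}\eta^{1/\ref{a:nondiv}}$ on $B_U(e)$, then comparing the polynomial map at scales $\lambda_j(u)$ for $j=1,\ldots,k$ (and using \eqref{eq:lambda-scale}, \eqref{eq:contpropexpmap} to organize those nested scales inside $B_U(e)$) one finds that each directional coefficient along $\zpz\in\mathcal{B}_U$ is amplified under the change of variables by a factor comparable to the corresponding eigenvalue of $\lambda^k$, hence at least $\euler^k$. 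A Markov-type inequality relating polynomial-coefficient size and supremum norm then forces $\max_{\zpz\in\mathcal{B}_U}\|\zpz\wedge\eta_M(g\lambda_k(u))\|$ to absorb the compensating factor $\euler^{-k/\ref{a:nondiv}}$, as required.

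The main obstacle is the uniform handling of all proper $\Mbf\in\mathcal{H}$ in the covering step: one needs quantitative height control on the ``winning'' subgroup to propagate from the input lattice vector $\vpz$, and one must restrict attention to boundedly many Chevalley representations throughout. This is precisely what the material in \S\ref{sec:chevalleyclassH} is designed for, in particular Proposition~\ref{prop:chevalley}; together with the height bounds of Proposition~\ref{prop:heightifgenbylattice} applied to the groups generated along the covering induction, these tools should make the final constant $\ref{c:nondiv}$ depend only polynomially on $\height(\G)$, and the exponent $\ref{a:nondiv}$ depend only on $N$, as claimed.
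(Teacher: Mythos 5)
You should be aware that this paper does not prove Theorem~\ref{thm:nondiv} at all: it is imported verbatim as \cite[Theorem~6.3]{LMMS}, one of the external inputs of the present article (alongside \cite[Thm.~3.3]{LMMS} and \cite[Prop.~5.8]{LMMS}). So there is no in-paper proof to compare your argument against, and any "proof" here would have to reconstruct a substantial portion of the companion paper.

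On its own merits, your sketch has the right architecture for such a reconstruction: translating $\Gamma g\lambda_k(u)\notin X_\eta$ into a small integral vector of $\gfrak(\Z)$ is indeed just the definition of $X_\eta$; the Dani--Margulis--Kleinbock scheme with $(C,\alpha)$-good polynomial maps \cite{DM-Linearization,KM-Nondiv} is the correct engine; and the dichotomy "either every protecting $\Mbf=\Nbf_\Ubf^{\mathcal H}$ has $\sup_{B_U(e)}\norm{\eta_M(g\lambda_k(u))}$ above a threshold, or one fails everywhere and becomes the exceptional subgroup" is the correct shape, matching the ``moreover'' clause of the statement. But two things remain at the level of assertion rather than proof. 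First, the covering induction over the partially ordered family of class-$\mathcal H$ subgroups --- handling the simultaneous smallness of several incomparable $\eta_M$'s, with uniform height control on the subgroups that arise along the induction --- is the actual content of \cite{LMMS}; Propositions~\ref{prop:chevalley} and~\ref{prop:heightifgenbylattice} supply ingredients but do not by themselves organize that induction. Second, your derivation of the factor $\euler^{-k/\ref{a:nondiv}}$ conflates the coefficients of the polynomial $u\mapsto\eta_M(g\lambda_k(u))$ with the quantity $\zpz\wedge\eta_M(g\lambda_k(u))$: the directional derivative of the orbit map is $\mathrm{D}\rho_M(\lambda^k\zpz)$ applied to the vector, not a wedge with $\zpz$, and passing from a bound on the former to a bound on the latter requires the argument of \cite[Prop.~5.8]{LMMS} (polynomial interpolation plus the effective nearness of an almost-stabilizing direction to the actual stabilizer $\Lie(M)$), which is exactly the mechanism the present paper invokes in \S\ref{sec:manynon-Dio}. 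As written, your proposal is a faithful roadmap to the proof in \cite{LMMS} rather than a self-contained proof.
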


By \cite[Lemma 2.8]{LMMS}, there exists $F>0$ (depending only on $N$) and $E_{\G}>0$ depending on the geometry of $\Gamma \backslash G$ so that for any $y \in X_\tau$ we can find $g_0 \in G$ with $\Gamma g_0 = y$ and 
\begin{align}\label{eq:diamestimate}
|g_0| \leq E_\G \tau^{-F};
\end{align}
see also \cite{MSGT-diameter} for more precise statements.
 
Applying Theorem~\ref{thm:nondiv} with $\eta = R^{-4\ref{a:nondiv}\kappa}$, our given point $x \in X_\tau$, and a representative $g \in G$ of $x$ as in \eqref{eq:diamestimate} we have 
\begin{align*}
|\{u \in B_U(e): x\lambda_k(u) \not\in X_{R^{-4\ref{a:nondiv}\kappa}}\}| \leq R^{-2\kappa}
\end{align*}
or Option~(2) of Theorem~\ref{thm:nondiv} holds.
In the latter case Option~(1) of the conclusion of Theorem~\ref{thm:main} holds, because due to our assumption that $R \gg \tau^{-\star}$ we have $|g|\leq E_\G \tau^{-F}\ll R^{\star}$.
Recall here that, in view of the comment after Theorem~\ref{thm:main}, it is enough to verify the conclusion of Theorem~\ref{thm:main} for any choice of a representative $g$ of $x$.

In particular, we assume from now on --- after  discarding a subset of $\mathcal{E}$ of size at most $R^{-2\kappa}$  --- that the following holds:

\begin{enumerate}[label={\color{red}(A\arabic*)}]
\setcounter{enumi}{2}
    \item\label{item:assumptionE3}
    For any $u \in \mathcal{E}$ we have $x\lambda_k(u)\in X_{R^{-4\ref{a:nondiv}\kappa}}$.
\end{enumerate}

\subsection{A large set of non-Diophantine points}\label{sec:manynon-Dio}
Assume that $\mathcal{E}$ is such that \ref{item:assumptionE1}, \ref{item:assumptionE2}, and \ref{item:assumptionE3} hold for some $\kappa>0$ (fixed, but small).
We further fix some $\theta\in (0,1)$, to be determined in the course of the proof, with $\theta^{\dim(\G)+1} > \kappa$. 
Set
\begin{align}\label{eq:define elli}
\ell_i = \lfloor\theta^{\dim(\G)+1-i}\log(R) \rfloor,\quad \text{for $i = 0,\ldots,\dim(\G)+1$.}
\end{align}
Note that $\ell_{i+1}\geq \lfloor \theta^{-1}\rfloor \ell_i$, and  $2k_0\leq \ell_0 \leq \ldots\leq\ell_{\dim(\G)+1} <k$.
The aim of this subsection is to prove the following claim that plays a key role in the proof of Theorem~\ref{thm:main}.

\begin{proposition}\label{prop:mainstep}
For any $D>1$ there exist $\kappa,\theta \in (0,1)$ and $\consta\label{a:mainstep}>1$ (depending only on $N,D$), and a measurable subset $\mathcal{E}_0 \subset \mathcal{E}$ with $|\mathcal{E}_0| \gg |\mathcal{E}|$ and with the following property: 
for any $w \in \mathcal{E}_0$ there exists $i\in \{0,\ldots,\dim(\G)\}$ and a proper $\Q$-subgroup $\Lbf < \G$ of class $\mathcal{H}$ such that
\begin{align}\label{eq:mainstep}
\begin{split}
\norm{\eta_L(g\lambda_k(w))} &\leq \euler^{ \ell_{i}\ref{a:mainstep}},\\
\max_{\zpz \in \mathcal{B}_U} \norm{\zpz \wedge \eta_L(g\lambda_k(w))} &\leq \euler^{-D\ell_{i}\ref{a:mainstep} }.
\end{split}
\end{align}
\end{proposition}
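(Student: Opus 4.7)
The plan is a multi-scale argument combining the height bound from Proposition~\ref{prop:heightifgenbylattice} with the rigidity statement of Proposition~\ref{prop:nearbyideal2}. For each $w\in\mathcal{E}$ and $u\in\mathcal{E}$, fix $\gamma_{u,w}\in\Gamma$ as in~\ref{item:assumptionE2}, set $h_{u,w} = \lambda_k(w)^{-1}g^{-1}\gamma_{u,w}g\lambda_k(u)$, and note $\gamma_{u,w}=g\lambda_k(w)h_{u,w}\lambda_k(u)^{-1}g^{-1}$, so that $\norm{\gamma_{u,w}}$ is controlled in terms of $|g|\leq R^\star$ (via~\eqref{eq:diamestimate}), $\norm{h_{u,w}}\leq R^\kappa$, and $|\lambda_k(u)|\leq\euler^{k\star}$. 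For each $i\in\{0,\ldots,\dim(\G)+1\}$, let $\mathbf{L}_{w,i}$ denote the $\Q$-Zariski closure of the subgroup of $\Gamma$ generated by those $\gamma_{u,w}$ of norm $\leq\euler^{c\ell_i}$ (for a suitable fixed constant $c$); by Proposition~\ref{prop:heightifgenbylattice}, $\height(\mathbf{L}_{w,i}^{\mathcal H})\ll\euler^{\ell_i\star}$.

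The chain of identity components $\mathbf{L}_{w,0}^\circ\subset\cdots\subset \mathbf{L}_{w,\dim(\G)+1}^\circ$ has length $\dim(\G)+2$ but at most $\dim(\G)+1$ distinct dimensions, so by pigeonhole there is $i\in\{0,\ldots,\dim(\G)\}$ with $\mathbf{L}_{w,i}^\circ=\mathbf{L}_{w,i+1}^\circ$. If $\mathbf{L}_{w,i}^\circ=\G$, the generators at scale $\ell_i$ Zariski-generate~$\G$, have norm $\leq\euler^{\ell_i\star}$, and almost stabilize $\hat{\vpz}_\hfrak$ to within $R^{-1}$; since $\ell_i\leq\theta\log R$ for the stable index, Proposition~\ref{prop:nearbyideal2} (combined with the hypothesis that $\hfrak+\mathrm{rad}(\gfrak)$ is not a proper ideal) yields a contradiction when $\theta$ is chosen small enough, ruling out this case. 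Hence $\Lbf:=\mathbf{L}_{w,i}^{\mathcal H}$ is a proper nontrivial subgroup of class $\mathcal H$ with $\height(\Lbf)\ll\euler^{\ell_i\star}$. Since $\Lbf$ is characteristic in $\mathbf{L}_{w,i}$, each generator $\gamma_{u,w}$ normalizes $\Lbf$, giving $\rho_L(\gamma_{u,w}^{-1})\vpz_L=\pm\vpz_L$; substituting the explicit form of $\gamma_{u,w}$ leads to
\[
\pm\eta_L(g\lambda_k(u)) = \rho_L(h_{u,w}^{-1})\,\eta_L(g\lambda_k(w)).
\]
Replacing $\Lbf$ by a $\Gamma$-conjugate to minimize $\norm{\eta_L(g\lambda_k(w))}$, together with $\norm{h_{u,w}}\leq R^\kappa\ll\euler^{\ell_i\star}$, then yields the first estimate in~\eqref{eq:mainstep}.

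For the wedge estimate, the crucial input is the gap between consecutive scales: the dimensional stability implies that the $\gamma_{u,w}$ of norm $\leq\euler^{\ell_{i+1}\star}$ all lie in $\mathbf{L}_{w,i}^\circ$ (after passing to a bounded power, by Lemma~\ref{lem:bound on index}) and thus normalize $\Lbf$. This yields the same relation $\pm\eta_L(g\lambda_k(u)) = \rho_L(h_{u,w}^{-1})\eta_L(g\lambda_k(w))$ for $u$ in a positive-measure subset of $B_U(e)$. Differentiating in the direction of $\zpz\in\mathcal{B}_U$ and invoking the expansion property~\eqref{eq:lambda-scale} of $\lambda_k$, which amplifies tangent vectors exponentially, the coarse bound at scale $\ell_{i+1}$ converts into $\norm{\zpz\wedge\eta_L(g\lambda_k(w))}\leq\euler^{-(\ell_{i+1}-\ell_i)\star}$; the choice $\ell_{i+1}=\theta^{-1}\ell_i$ with $\theta$ small (depending on $D$) ensures this is $\leq\euler^{-D\ell_i\star}$. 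A final pigeonhole over $w\in\mathcal{E}$ selects the large subset $\mathcal{E}_0$ on which the stable index $i$ is uniform.

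The main obstacle is this last step: turning the discrete normalization condition --- $\gamma_{u,w}\in\mathrm{N}_\G(\Lbf)$ for $u$ in a positive-measure set but not in a smooth family --- into an infinitesimal condition on $\zpz\in\mathcal{B}_U$. This requires a Vitali-type averaging argument in concert with the expansion of $\lambda_k$ to smooth discrete information into derivative information across the gap from $\ell_i$ to $\ell_{i+1}$, and it is here that the geometric (rather than arithmetic) spacing of the scales $\ell_i$ is essential for obtaining the dependence on $D$.
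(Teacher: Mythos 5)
Your skeleton (multi-scale Zariski closures $\mathbf{L}_{w,i}$ of the lattice elements, pigeonhole on the chain of identity components, properness via Proposition~\ref{prop:nearbyideal2}, height control via Proposition~\ref{prop:heightifgenbylattice}) is the same as the paper's, but the argument has genuine gaps at exactly the point you flag as "the main obstacle" and at one point you do not flag. Most seriously, the case where $\Lbf=\mathbf{L}_{w,i}^{\mathcal{H}}$ is \emph{normal} in $\G$ is not addressed, and there your mechanism is vacuous: if $\Lbf\lhd\G$ and $\G$ is perfect, then $\rho_L(g)\vpz_L=\vpz_L$ for every $g\in G$, so the identity $\pm\eta_L(g\lambda_k(u))=\rho_L(h_{u,w}^{-1})\eta_L(g\lambda_k(w))$ collapses to $\vpz_L=\vpz_L$ and differentiating it gives no information, while the required bound $\norm{\zpz\wedge\vpz_L}\leq\euler^{-D\ell_i\ref{a:mainstep}}$ is a real constraint. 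The paper extracts it by a completely separate argument: a Chevalley pair for $\Mbf$ from Proposition~\ref{prop:chevalley}, the observation that the image of $\Mbf_{i_0+1}$ in the associated representation is virtually a $\Q$-torus and hence has only $\ll\log(R)^{\star}$ relevant lattice points, a pigeonhole over those values, and Lagrange interpolation applied to the polynomial $t\mapsto\vartheta(\exp(t\Ad(g_0g_{u_1})\zpz))$ bounded over an interval of length $\sim\euler^{\star\ell_{i_0+1}}$. Second, even in the non-normal case, "Vitali-type averaging" is not the tool that closes the step you leave open. The paper never differentiates the non-smooth assignment $u\mapsto h_{u,w}$; instead it uses that $u\mapsto\eta_M(g_0\lambda_k(w)^{-1}\lambda_k(u))$ is a \emph{polynomial} map which is bounded on a positive-measure set, hence bounded on all of $\lambda_k(w)\lambda_{\ell_{i_0+1}}(B_U(e))$ by the Remez inequality (Lemma~\ref{lem:Remez}); boundedness of a bounded-degree polynomial over a ball of radius $\sim\euler^{\ell_{i_0+1}}$ forces its nonconstant coefficients to be exponentially small, and passing to $\Lbf=\Nbf_{\Mbf}^{\mathcal{H}}$ via \cite[Prop.~5.8]{LMMS} then yields both estimates in \eqref{eq:mainstep}.

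Two further assertions need proof. (i) Nontriviality of $\mathbf{L}_{w,i}^{\mathcal{H}}$: if $\mathbf{L}_{w,i}^{\circ}$ were a torus the conclusion would fail, and ruling this out (Lemma~\ref{lem:classHpart nontrivial}) requires showing that the set of $u\in\mathcal{E}$ within scale $\ell_i$ of $w$ produces $\gg\euler^{\star\ell_i}$ \emph{distinct} lattice elements of norm $\leq\euler^{\star\ell_i}$ — this is where the density lower bound \eqref{eq:gooddensitypts} enters — which is incompatible with the $\ll\log(K)^{\star}$ lattice-point count in tori. (ii) Your input to Proposition~\ref{prop:nearbyideal2}, namely that the generators "almost stabilize $\hat{\vpz}_{\hfrak}$ to within $R^{-1}$," is not what hypothesis \ref{item:assumptionE2} gives: it controls $h_{u,w}=\lambda_k(w)^{-1}g^{-1}\gamma_{u,w}g\lambda_k(u)$, and conjugating back to $\gamma_{u,w}$ involves $\lambda_k(u)$, whose norm is of size $\euler^{\star k}\gg R$ and destroys the estimate. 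The paper first proves that $\lambda_{\ell}(u)$ almost normalizes $\hfrak$ for $\ell=\lfloor\theta\log R\rfloor$ (Lemma~\ref{lem:Liealg almost U normalized}, itself another Remez argument), and only then deduces $\metric(\gamma_u g_0.\hat{\vpz}_{\hfrak},g_0.\hat{\vpz}_{\hfrak})\leq R^{-1/8}$, which is what feeds into Proposition~\ref{prop:nearbyideal2}. Without these steps the proposal does not close.
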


\noindent
The rest of the subsection is dedicated to the proof of Proposition~\ref{prop:mainstep}.
As before, $\theta,\kappa$ are fixed small constants to be determined during the proof.

\subsubsection{A set of good density points}
The following technical lemma constructs the desired subset $\mathcal{E}_0$ of points with good density simultaneously for all scales $\ell_i$.

\begin{lemma}\label{lem:goodsubset}
There exists a measurable subset $\mathcal{E}_0 \subset \mathcal{E}$ with $|\mathcal{E}_0| \gg |\mathcal{E}|$ such that for every $i=0,\ldots,\dim(\G)+1$ and every $w \in \mathcal{E}_0$ we have
\begin{align}\label{eq:gooddensitypts}
|\{u \in \mathcal{E}:
\lambda_k(u)\in \lambda_k(w) \lambda_{\ell_{i}}(B_U(e))\}|
 \gg|\mathcal{E}| \frac{|\lambda_{\ell_{i}}(B_U(e))|}{|\lambda_{k}(B_U(e))|}
\end{align}
as well as
\begin{align}\label{eq:gooddensitypts2}
|\{u \in B_U(e): \lambda_k(w)\lambda_{\ell_i}(u) \in \lambda_k(\mathcal{E})\}|
\gg |\mathcal{E}|.
\end{align}
\end{lemma}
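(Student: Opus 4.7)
The plan is to reduce the lemma to a density statement at a single scale, prove the single-scale statement via a maximal-packing covering argument built on the approximate doubling supplied by \eqref{eq:contpropexpmap}, and then intersect over the $\dim(\G)+2$ scales.

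First I would observe that the two conclusions \eqref{eq:gooddensitypts} and \eqref{eq:gooddensitypts2} are in fact equivalent. Writing $V_i:=\lambda_{\ell_i}(B_U(e))$, the substitution $v=\lambda_{\ell_i}(u)$ in the set defining \eqref{eq:gooddensitypts2}, together with the Jacobian formula \eqref{eq:lambda-scale} applied to both $\lambda_k$ and $\lambda_{\ell_i}$, shows that both inequalities amount to
\[
|\lambda_k(w)V_i\cap\lambda_k(\mathcal{E})|\gg |\mathcal{E}|\cdot|\!\det(\lambda)|^{\ell_i}.
\]
So it is enough to produce, for each $i$, a set $\mathcal{G}_i\subset\mathcal{E}$ of relative density at least $1-c_0$ in $\mathcal{E}$ on which this inequality holds, with $c_0$ small depending only on $N$; then $\mathcal{E}_0:=\bigcap_i\mathcal{G}_i$ works because the number of scales is $\dim(\G)+2$, which is controlled by $N$.

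For a fixed $i$ I would set $V_i':=\lambda_{\ell_i-k_0}(B_U(e))$ and $V_i'':=\lambda_{\ell_i-2k_0}(B_U(e))$. Iterating \eqref{eq:contpropexpmap} yields the approximate-doubling inclusions $V_i''\cdot V_i''\subset V_i'$, $V_i'\cdot V_i'\subset V_i$ and $V_i'\cdot V_i''\subset V_i$; the lower bound $\ell_i\geq 2k_0$ needed for these follows from $\theta^{\dim(\G)+1}>\kappa$ combined with $\kappa\log R>2k_0$, after a mild tightening of $A$ if necessary. Choose a maximal family $\{w_\alpha\}_{\alpha\in F_i}\subset\lambda_k(B_U(e))$ whose $V_i''$-translates are pairwise disjoint. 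Maximality combined with $V_i''=(V_i'')^{-1}$ and $V_i''V_i''\subset V_i'$ yields the covering $\lambda_k(B_U(e))\subset\bigcup_\alpha w_\alpha V_i'$, while disjointness combined with $V_i'V_i''\subset V_i$ bounds the multiplicity of this cover by $|V_i|/|V_i''|=|\!\det(\lambda)|^{2k_0}$, an absolute constant. A volume comparison then gives $|F_i|\,|V_i|\asymp|\lambda_k(B_U(e))|$.

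Now set $E_\alpha:=\{u\in\mathcal{E}:\lambda_k(u)\in w_\alpha V_i'\}$, so that the covering and bounded multiplicity give $|\mathcal{E}|\leq\sum_\alpha|E_\alpha|\ll|\mathcal{E}|$. Declaring $\alpha$ \emph{good} when $|E_\alpha|\geq\epsilon|\mathcal{E}|\,|V_i|/|\lambda_k(B_U(e))|$ and summing over bad $\alpha$'s shows $\sum_{\alpha\text{ bad}}|E_\alpha|\ll\epsilon|\mathcal{E}|$, so $\mathcal{G}_i:=\mathcal{E}\cap\bigcup_{\alpha\text{ good}}E_\alpha$ has $|\mathcal{E}\setminus\mathcal{G}_i|\ll\epsilon|\mathcal{E}|$. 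The crucial step is that for $w\in E_\alpha$ with $\alpha$ good, the relation $\lambda_k(w)\in w_\alpha V_i'$ together with $(V_i')^{-1}V_i'=V_i'V_i'\subset V_i$ yields $w_\alpha V_i'\subset\lambda_k(w)V_i$, and therefore $E_\alpha\subset\{u\in\mathcal{E}:\lambda_k(u)\in\lambda_k(w)V_i\}$. This upgrades the pigeonhole lower bound for $|E_\alpha|$ to the required density at the target scale $V_i$. Choosing $\epsilon$ small enough in terms of $N$ makes the cumulative loss over the $\dim(\G)+2$ scales at most $|\mathcal{E}|/2$, and produces $\mathcal{E}_0$ with $|\mathcal{E}_0|\gg|\mathcal{E}|$.

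The main obstacle I expect is the non-abelian geometry of $U$: the sets $V_i$ are not genuine metric balls, and without \eqref{eq:contpropexpmap} neither the bounded-multiplicity cover nor the passage from scale $V_i'$ back up to scale $V_i$ in the pigeonhole step would go through. What makes the argument work is that \eqref{eq:contpropexpmap} supplies exactly the approximate doubling needed at both points, and all the resulting constants depend only on $N$ (using the paper's convention that constants allowed to depend on $N$ may also depend on $|\!\det(\lambda)|$ and $k_0$).
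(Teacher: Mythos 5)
Your proof is correct and follows essentially the same strategy as the paper's: a maximal packing of $\lambda_k(B_U(e))$ at scale $\ell_i-2k_0$, the bounded-multiplicity cover at scale $\ell_i-k_0$ obtained from \eqref{eq:contpropexpmap}, a pigeonhole selection of good translates, and the approximate-doubling step to pass back up to scale $\ell_i$. The only (harmless) organizational difference is that you treat each scale independently relative to $\mathcal{E}$ and intersect the resulting sets $\mathcal{G}_i$, whereas the paper iterates downward in $i$, feeding $\mathcal{E}_{i+1}$ into the construction at step $i$; your explicit observation that \eqref{eq:gooddensitypts} and \eqref{eq:gooddensitypts2} are equivalent via left-invariance of Haar measure is also how the paper deduces the second estimate from the first.
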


\begin{proof}
In the following, we will use the fact that for all $\ell\geq 1$, $\lambda_\ell(\cdot)$ scales the Haar measure on $U$ by a constant factor (indeed, the Haar measure on $U$ corresponds to the Lebesgue measure on the Lie algebra $\ufrak$ via the exponential map).

We construct by downward induction on $i$ subsets $\mathcal{E}_i$ so that $\mathcal{E}_i$ satisfies the lemma for all $i' \geq i$.
For $i = \dim(\G)+1$, let $\mathcal{U}'\subset B_U(e)$ be a maximal subset such that the translates $\lambda_k(u) \lambda_{\ell_{\dim(\G)+1}-2k_0}(B_U(e))$ for $u \in \mathcal{U}'$ are pairwise disjoint, where $k_0>0$ is as in \eqref{eq:contpropexpmap}.
Then
\begin{align} \label{eq:cover}
\lambda_k(B_U(e))\subset \bigcup_{u \in \mathcal{U}'} \lambda_k(u) \lambda_{\ell_{\dim(\G)+1}-k_0}(B_U(e)),
\end{align}
where the multiplicity of the covering is $\ll1$. 
In particular, we have
\begin{align*}
\#\mathcal{U}' \asymp \frac{|\lambda_k(B_U(e))|}{|\lambda_{\ell_{\dim(\G)+1}}(B_U(e))|}.
\end{align*}
Let $\mathcal{U} \subset \mathcal{U}'$ be the subset of points $u$ for which 
\begin{equation}\label{eq:E0defU'}
   \begin{aligned}
|\lambda_k(\mathcal{E}) \cap \lambda_k(u) \lambda_{\ell_{\dim(\G)+1}-k_0}(B_U(e))| &\geq \frac{|\lambda_k(\mathcal{E})|}{2\cdot (\#\mathcal{U}')}\\
&\gg |\mathcal E| |\lambda_{\ell_{\dim(\G)+1}}(B_U(e))|,
\end{aligned} 
\end{equation}
where we used $|\mathcal E|=|\lambda_k(\mathcal E)|/|\lambda_k(B_U(e))|$
in the second inequality. Set
\begin{align*}
\mathcal{E}_{\dim(\G)+1}:= \{u' \in \mathcal{E}: 
\exists u \in \mathcal{U} \text{ s.t. }
\lambda_k(u')\in \lambda_k(u) \lambda_{\ell_{\dim(\G)+1}-k_0}(B_U(e)) \}.
\end{align*}
Then, by \eqref{eq:cover} and the first inequality in \eqref{eq:E0defU'}, 
$|\lambda_k(\mathcal{E}_{\dim(\G)+1})|\geq \frac{1}{2}|\lambda_k(\mathcal{E})|$, and hence
$|\mathcal{E}_{\dim(\G)+1}| \geq \frac{1}{2}|\mathcal{E}|$.
Moreover, if $w \in \mathcal{E}_{\dim(\G)+1}$ and $u \in \mathcal{U}$ is such that $\lambda_k(w)\in \lambda_k(u) \lambda_{\ell_{\dim(\G)+1}-k_0}(B_U(e))$, then by \eqref{eq:contpropexpmap}
\begin{align}\label{eq:E0constr-restrtoE}
\lambda_k(u) \lambda_{\ell_{\dim(\G)+1}-k_0}(B_U(e)) \subset
\lambda_k(w) \lambda_{\ell_{\dim(\G)+1}}(B_U(e)).
\end{align}

Thus by \eqref{eq:E0constr-restrtoE} and \eqref{eq:E0defU'}, we have 
\begin{equation}\label{eq:gooddensitypts'}
|\lambda_k(\mathcal{E}) \cap \lambda_k(w)\lambda_{\ell_{\dim(\G)+1}}(B_U(e))|
\gg |\mathcal{E}| |\lambda_{\ell_{\dim(\G)+1}}(B_U(e))|
\end{equation}
which implies \eqref{eq:gooddensitypts}; recall that $\lambda_k$ scales the Haar measure by a constant factor.  

Since the Haar measure is left invariant,~\eqref{eq:gooddensitypts'} also implies that for $w \in \mathcal{E}_{\dim(\G)+1}$
\begin{align*}
|\lambda_k(w)^{-1}\lambda_k(\mathcal{E}) \cap \lambda_{\ell_{\dim(\G)+1}}(B_U(e))|
\gg |\mathcal{E}| |\lambda_{\ell_{\dim(\G)+1}}(B_U(e))|
\end{align*}
and so \eqref{eq:gooddensitypts2} holds for $i = \dim(\G)+1$; recall again that $\lambda_{\ell_{\dim(\G)+1}}$ scales the Haar measure by a constant factor.

We iterate the above construction using at the $i$-th step the set $\mathcal{E}_{i+1}$ constructed in the previous step instead of $\mathcal{E}$.
This constructs a set $\mathcal{E}_0$ with the desired properties and so yields the lemma.
\end{proof}

\subsubsection{Constructing a group of class $\mathcal{H}$}\label{sec:construct-group}
\textbf{We fix $w \in \mathcal{E}_0$ till we complete the proof of Proposition~\ref{prop:mainstep}.}

Let $\gamma_0 \in \Gamma$ be such that for
\begin{align} \label{eq:g0}
    g_0 = \gamma_0 g \lambda_k(w) \text{, we have } |g_0| \ll R^{\star \kappa},
\end{align}
which is possible in view of \eqref{eq:diamestimate} and \ref{item:assumptionE3}.
By \ref{item:assumptionE2}, and inserting $g_0 = \gamma_0 g \lambda_k(w)$, for any $u \in \mathcal{E}$ we pick $\gamma_u \in \Gamma$ with
\begin{align}\label{eq:deflatticeelements}
\norm{g_0^{-1} \gamma_u g_0 \lambda_k(w)^{-1}\lambda_k(u)} &\leq R^{\kappa},       \\
\metric(g_0^{-1}\gamma_u g_0\lambda_k(w)^{-1}\lambda_k(u). \hat{\vpz}_\hfrak, \hat{\vpz}_\hfrak)&\leq R^{-1}\label{eq:deflatticeelements2}.
\end{align}
To simplify notation, we set 
\begin{align} \label{eq:gu}
g_u = g_0^{-1} \gamma_u g_0 \lambda_k(w)^{-1}\lambda_k(u).
\end{align}
For $0\leq i\leq \dim(\G)$, let
\begin{align} \label{eq:Eprime_i}
\mathcal{E}_i' = \{u \in \mathcal{E}:  \lambda_k(u) \in \lambda_k(w)\lambda_{\ell_i}(B_U(e))\}.
\end{align}
Then $\mathcal{E}_0' \subset \mathcal{E}_1'\subset \ldots$.
For any $u \in \mathcal{E}_i'$ we have 
\begin{align}\label{eq:heightboundlatticel}
\norm{\gamma_u} \leq \euler^{\ref{a:boundlatticeel}\ell_i}
\end{align}
for some $\consta\label{a:boundlatticeel}>0$ by \eqref{eq:g0}, \eqref{eq:deflatticeelements}, and that $R^\kappa \leq \euler^{\ell_0}\leq \euler^{\ell_i}$. We set
\begin{align*}
\Mbf_i = \overline{\langle\gamma_u: u \in \mathcal{E}_i'\rangle}^z.
\end{align*}
In the following, we first collect some properties of the subgroups $\Mbf_i$.

\begin{lemma}\label{lem:gen group proper}
For every $0\leq i\leq \dim(\G)$, the subgroup $\Mbf_i<\G$ is proper.
\end{lemma}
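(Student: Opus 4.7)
The plan is by contradiction: assume $\Mbf_i = \G$ and apply Proposition~\ref{prop:nearbyideal2} to the conjugated subalgebra $\hfrak' := \Ad(g_0).\hfrak$, which will force a lower bound on a certain projective distance that contradicts an easy upper bound. The hypothesis of that proposition requires $\hfrak' + \mathrm{rad}(\gfrak)$ not to be a proper Lie ideal; this follows since $\mathrm{rad}(\gfrak)$ is $\Ad(G)$-invariant and being a Lie ideal is preserved under $\Ad$, so $\hfrak' + \mathrm{rad}(\gfrak) = \Ad(g_0)\bigl(\hfrak + \mathrm{rad}(\gfrak)\bigr)$.

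Rewriting the closing conditions \eqref{eq:deflatticeelements}--\eqref{eq:deflatticeelements2} in terms of $\hat{\vpz}_{\hfrak'}$ via conjugation by $g_0$ (and using $|g_0| \ll R^{\star\kappa}$ from \eqref{eq:g0}), one gets, for each $u \in \mathcal{E}_i'$,
\[
\|\gamma_u \tilde h_u\| \ll R^{\star\kappa}, \qquad \metric\bigl(\gamma_u \tilde h_u . \hat{\vpz}_{\hfrak'}, \hat{\vpz}_{\hfrak'}\bigr) \ll R^{\star\kappa - 1},
\]
where $\tilde h_u := g_0 h_u g_0^{-1}$. Since $h_u \in \lambda_{\ell_i}(B_U(e))$ and $\ell_i \leq \theta \log R$ for $i \leq \dim \G$, we also have $\|\tilde h_u\| \ll R^{\star(\theta+\kappa)}$. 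In the special case $u=w$, $\tilde h_w = e$, so $\gamma_w$ itself approximately fixes $\hat{\vpz}_{\hfrak'}$ with the same error. For general $u$, a careful triangle-inequality argument in projective space—combining the preceding estimate with the approximate fixing by $\gamma_w$ and the norm bound on $\tilde h_u$—yields lattice elements $\alpha_u \in \Gamma$ (such as $\alpha_u = \gamma_u \gamma_w^{-1}$) of norm at most $\euler^{A \ell_i}$ with
\[
\metric\bigl(\alpha_u . \hat{\vpz}_{\hfrak'}, \hat{\vpz}_{\hfrak'}\bigr) \ll R^{\star(\theta + \kappa) - 1}.
\]

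Assuming $\Mbf_i = \G$, the family $\{\alpha_u : u \in \mathcal{E}_i'\} \cup \{\gamma_w\}$ generates a Zariski-dense subgroup of~$\G$. Proposition~\ref{prop:nearbyideal2} then yields
\[
R^{\star(\theta+\kappa) - 1} \geq \ref{c:nearbyideal2}\, \height(\G)^{-\ref{a:nearbyideal2}} \bigl(\euler^{A \ell_i}\bigr)^{-\ref{a:nearbyideal2}} \gg R^{-\star\theta},
\]
which is a contradiction once $\theta$ and $\kappa$ are chosen sufficiently small in terms of $N$ and the constants from Proposition~\ref{prop:nearbyideal2}. The main technical obstacle in the plan is producing the $\alpha_u$ with the claimed error control: although $\gamma_u \tilde h_u$ approximately fixes $\hat{\vpz}_{\hfrak'}$ for every $u$, the projective distance from $\tilde h_u . \hat{\vpz}_{\hfrak'}$ to $\hat{\vpz}_{\hfrak'}$ is not automatically small (since $\tilde h_u$ need not approximately normalize $\hfrak'$), so the bound on $\metric(\alpha_u . \hat{\vpz}_{\hfrak'}, \hat{\vpz}_{\hfrak'})$ requires either restricting to $u$ with $\tilde h_u$ tame, or exploiting group-theoretic cancellations between different $\gamma_u$'s to absorb the twist into the $R^{\star\theta}$ factor.
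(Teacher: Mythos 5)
Your overall skeleton matches the paper's: argue by contradiction, transport the data to the subalgebra $\Ad(g_0)\hfrak$, and invoke Proposition~\ref{prop:nearbyideal2} to get a lower bound on $\delta$ of the form $\gg \height(\G)^{-\star}\euler^{-\star\ell_i} \gg R^{-\star\theta}$ that contradicts the smallness of the projective displacement. However, the step you yourself flag as ``the main technical obstacle'' is a genuine gap, and the specific fix you sketch does not close it. The assumption \ref{item:assumptionE2} only tells you that $\gamma_u$ composed with the unipotent displacement $h_u = \lambda_k(w)^{-1}\lambda_k(u) = \lambda_{\ell_i}(u_1)$ approximately fixes $\hat{\vpz}_{\hfrak}$ (after conjugating by $g_0$); to feed Proposition~\ref{prop:nearbyideal2} you need the \emph{lattice elements alone} to approximately fix $g_0.\hat{\vpz}_\hfrak$, i.e.\ you must first show $\metric(\lambda_{\ell_i}(u_1).\hat{\vpz}_\hfrak,\hat{\vpz}_\hfrak)$ is small. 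Your proposed $\alpha_u=\gamma_u\gamma_w^{-1}$ does not achieve this: since $h_w=e$, composing with $\gamma_w^{-1}$ cancels nothing, and $\metric(\alpha_u.g_0.\hat{\vpz}_\hfrak, g_0.\hat{\vpz}_\hfrak)$ still carries the uncontrolled twist $h_u.\hat{\vpz}_\hfrak$ versus $\hat{\vpz}_\hfrak$.

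The paper's resolution is Lemma~\ref{lem:Liealg almost U normalized}, which is the substantive content you are missing. It is in the spirit of your second suggested strategy (``group-theoretic cancellations between different $\gamma_u$'s''), but the cancellation goes the other way: one pigeonholes over the $\ll \euler^{\star\ell}$ lattice elements of bounded norm to find a positive-measure set $\mathcal{F}$ of parameters sharing the \emph{same} $\gamma$, so that the quotient $g_{u_0'}^{-1}g_{u'}=\lambda_\ell(u_0)^{-1}\lambda_\ell(u)$ is a \emph{pure unipotent} element approximately fixing $\hat{\vpz}_\hfrak$. This gives the almost-$U$-normalization on a set of measure $\gg|\mathcal{E}|\euler^{-\star\ell}$ only, and the Remez inequality (Lemma~\ref{lem:Remez}) is then essential to upgrade it to all of $B_U(e)$, at the cost of a factor $\euler^{\star\ell}\leq R^{\star\theta}$ absorbed by taking $\theta$ small. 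Without this lemma (or an equivalent), your application of Proposition~\ref{prop:nearbyideal2} has no valid input $\delta$, so the argument as written does not go through.
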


To prove Lemma~\ref{lem:gen group proper}, we use the following lemma, which essentially asserts that $\hfrak$ is almost $U$-normalized.

\begin{lemma}\label{lem:Liealg almost U normalized}
If $\theta$ is sufficiently small depending only on $N$, for all $u \in B_U(e)$, 
\begin{align*}
\metric(\lambda_{\lfloor \theta \log(R)\rfloor}(u).\hat{\vpz}_\hfrak,\hat{\vpz}_\hfrak) \ll R^{-1/4}.
\end{align*}
\end{lemma}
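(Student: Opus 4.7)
The idea is to combine the good-density-point property of $w$ (Lemma~\ref{lem:goodsubset}), pigeonhole on a finite collection of lattice elements, and a Remez-type polynomial extension. Write $\ell=\lfloor\theta\log R\rfloor$, which by the choice of $\theta$ equals $\ell_{\dim(\G)}$.

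First, \eqref{eq:gooddensitypts2} with $i=\dim(\G)$ supplies a set $\mathcal G=\{u\in B_U(e):\lambda_k(w)\lambda_\ell(u)\in\lambda_k(\mathcal E)\}$ of measure $\gg|\mathcal E|\geq R^{-\kappa}$. For each $u\in\mathcal G$ one picks $u'\in\mathcal E$ with $\lambda_k(u')=\lambda_k(w)\lambda_\ell(u)$ and applies hypothesis~\ref{item:assumptionE2} to $(u',w)$. Using~\eqref{eq:g0} in the form $\lambda_k(w)^{-1}g^{-1}=g_0^{-1}\gamma_0$, $g\lambda_k(w)=\gamma_0^{-1}g_0$, and setting $\tilde\gamma_u:=\gamma_0\gamma\gamma_0^{-1}\in\Gamma$, the conclusion of~\ref{item:assumptionE2} rewrites as
\[
\lambda_\ell(u)=g_0^{-1}\tilde\gamma_u^{-1}g_0\cdot h_u,\quad\norm{h_u}\leq R^\kappa,\quad\metric(h_u.\hat\vpz_\hfrak,\hat\vpz_\hfrak)\leq R^{-1}.
\]
From $g_0^{-1}\tilde\gamma_ug_0=h_u\lambda_\ell(u)^{-1}$ and $\norm{\lambda_\ell(u)}\leq R^{\theta\star}$ one gets $\norm{g_0^{-1}\tilde\gamma_ug_0}\leq R^{\kappa+\theta\star}$. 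Since $g_0^{-1}\Gamma g_0$ is discrete (with the same covolume as $\Gamma$, $\SL_N$ being unimodular), the conjugated element $g_0^{-1}\tilde\gamma_ug_0$ takes at most $R^{(\theta+\kappa)\star}$ values; pigeonhole then produces a fixed $\tilde\gamma_\ast\in\Gamma$ and a subset $\mathcal G'\subset\mathcal G$ of measure $\gg R^{-\kappa-(\theta+\kappa)\star}$ on which $\tilde\gamma_u=\tilde\gamma_\ast$.

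Writing $q_\ast:=g_0^{-1}\tilde\gamma_\ast^{-1}g_0.\vpz_\hfrak$, the relation above combined with~\eqref{eq:Fubini-Study} gives $\metric(\lambda_\ell(u).\hat\vpz_\hfrak,\hat q_\ast)\leq R^{-1+\theta\star+\kappa\star}$ for every $u\in\mathcal G'$. The key further observation is that $\zpz\mapsto\Ad(\lambda_\ell(\exp\zpz))\vpz_\hfrak\wedge q_\ast$ is polynomial in $\zpz\in\ufrak$ of degree $\ll\dim(\G)$, with coefficients bounded by $R^{\theta\star}$, and the metric estimate on $\mathcal G'$ translates through~\eqref{eq:Fubini-Study} into a norm bound of $R^{-1+\theta\star+\kappa\star+\star}$ on this wedge over $\mathcal G'$. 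A multivariate Remez-type inequality extends this bound to all of $B_\ufrak(0,1)$ at the cost of a factor $(1/|\mathcal G'|)^{O(\dim(\G))}$. Choosing $\theta$ and $\kappa\leq\theta$ small enough depending only on $N$ brings the exponent to $\leq-3/4$, so $\metric(\lambda_\ell(u).\hat\vpz_\hfrak,\hat q_\ast)\leq R^{-3/4}$ holds for every $u\in B_U(e)$. Specialising to $u=e$ gives $\metric(\hat\vpz_\hfrak,\hat q_\ast)\leq R^{-3/4}$, and the triangle inequality concludes.

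The main obstacle is the exponent bookkeeping: $\theta$ appears both in the initial error $R^{-1+\theta\star+\kappa\star}$ and in the Remez loss controlled by $|\mathcal G'|^{-1}=R^{\kappa+(\theta+\kappa)\star}$. One must check that taking $\theta$ (and $\kappa\leq\theta$) sufficiently small compared to $\dim(\G)^{-1}$ keeps the total contribution below the target exponent $3/4$.
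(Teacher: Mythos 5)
Your argument is essentially the paper's: both proofs use \eqref{eq:gooddensitypts2} to produce a positive-measure set of $u$ with $\lambda_k(w)\lambda_\ell(u)\in\lambda_k(\mathcal E)$, pigeonhole over the $\ll \euler^{\star\ell}$ admissible lattice elements (the clean way to count these is via $\Gamma<\SL_N(\Z)$ and the norm bound, rather than discreteness/covolume of $g_0^{-1}\Gamma g_0$ alone) to get a subset of relative measure $\gg R^{-\kappa}\euler^{-\star\ell}$ on which $\lambda_\ell(u).\hat{\vpz}_\hfrak$ is pinned down, and then apply the Remez inequality to extend the bound to all of $B_U(e)$ at a cost of only $R^{(\theta+\kappa)\star}$ before taking $\theta$ small. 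The only immaterial difference is that the paper eliminates the lattice element by forming quotients $\lambda_\ell(u_0)^{-1}\lambda_\ell(u)$ of two group elements sharing the same $\gamma$ (landing in $\lambda_{\ell+k_0}(B_U(e))$ via \eqref{eq:contpropexpmap}), whereas you retain the fixed target $\hat q_\ast$ and recover the conclusion by evaluating the Remez-extended estimate at $u=e$ and applying the triangle inequality.
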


\begin{proof}[Proof of Lemma~\ref{lem:Liealg almost U normalized}]
Let $\ell = \lfloor \theta \log(R)\rfloor$ (note that $\ell=\ell_{\dim(\G)}$ by \eqref{eq:define elli}) and set
\begin{align*}
\mathcal{F}' = \{u \in B_U(e): \lambda_k(w)\lambda_\ell(u) \in \lambda_k(\mathcal{E})\}.
\end{align*}
Then, for each $u\in\mathcal{F}'$, we have a unique $u' \in \mathcal{E}$ such that   
\begin{align}\label{eq:u and u'}
\lambda_k(w)\lambda_\ell(u) = \lambda_k(u')
\end{align}
By \eqref{eq:gooddensitypts2}, we have $|\mathcal{F}'| \gg |\mathcal{E}|$, since $w \in \mathcal{E}_0$.

As the number of lattice elements $\gamma \in \Gamma<\SL(N,\Z)$ with $\norm{\gamma} \leq \euler^{\ref{a:boundlatticeel} \ell}$ is $\ll \euler^{\star \ell}$, 
it follows from \eqref{eq:heightboundlatticel} that there exists a subset $\mathcal{F}\subset\mathcal{F}'$ of measure $|\mathcal{F}| \gg |\mathcal{E}|\euler^{-\star \ell}$ on which the map $u\in \mathcal{F}\mapsto \gamma_{u'}$ is constant, where $u'$ is given by \eqref{eq:u and u'} and $\gamma_{u'}$ is as in \eqref{eq:gu}.

We fix $u_0 \in \mathcal{F}$. By \eqref{eq:deflatticeelements} and \eqref{eq:gu}, $\norm{g_{u_0'}}\leq R^\kappa$. For any $u'\in \mathcal{E}$, by \eqref{eq:deflatticeelements2} and \eqref{eq:gu}, we have $\metric(g_{u'}.\hat{\vpz}_{\hfrak}, \hat{\vpz}_{\hfrak}) \ll R^{-1}$.
Therefore, by the argument as in \eqref{eq:f(w)}, we get
\[
\metric(g_{u_0'}^{-1}g_{u'}.\hat{\vpz}_{\hfrak}, \hat{\vpz}_{\hfrak}) \ll R^{\star \kappa} R^{-1}\ll R^{-1/2},
\]
where we assume that $\kappa$ is sufficiently small depending on $N$. 

On the other hand, for any $u \in \mathcal{F}$ and the corresponding $u' \in \mathcal{E}$ as in \eqref{eq:u and u'},
\begin{align*}
g_{u_0'}^{-1}g_{u'} = \lambda_{k}(u_0')^{-1} \lambda_k(u') = \lambda_\ell(u_0)^{-1}\lambda_\ell(u) \in \lambda_{\ell+k_0}(B_U(e)),
\end{align*}
where $g_{u_0},g_{u}$ are given by \eqref{eq:gu}, $\gamma_{u_0'}=\gamma_{u'}$ by the definition of $\mathcal{F}$, and we use \eqref{eq:contpropexpmap} after noting that by definition $\lambda_\ell(u_0)^{-1}=\lambda_\ell(u_0^{-1})$. 
Let 
\begin{align*}
    \mathcal{G}_{u_0}=\{u_1 \in B_U(e):\lambda_{\ell+k_0}(u_1)=\lambda_{\ell}(u_0)^{-1} \lambda_\ell(u) \text{ for some } u \in \mathcal{F}\}.
\end{align*}
Then, by scale invariance by $\lambda_\ell$ of the Haar measure on $U$, 
\begin{align*}
    |\mathcal{G}_{u_0}|=|\det(\lambda)|^{-k_0} |\mathcal{F}|\gg |\mathcal{E}|\euler^{-\star \ell} \gg \euler^{-\star \ell},
\end{align*}
and for all $u_1\in \mathcal{G}_{u_0}$, 
\begin{align}\label{eq:HalmostUinv}
\metric(\lambda_{\ell+k_0}(u_1).\hat{\vpz}_{\hfrak}, \hat{\vpz}_{\hfrak}) \ll R^{-1/2}.
\end{align}
For any $u_1\in B_u(e)$, $|\lambda_{\ell+k_0}(u_1)|\ll \euler^{\ast \ell}$, and hence by \eqref{eq:Fubini-Study}, the left hand side of \eqref{eq:HalmostUinv} can be recast in terms of polynomials of degree, say at most $d$, depending only on $N$: given an orthonormal basis $\wpz_1,\ldots,\wpz_h$ of $\hfrak$, we have for any $u_1 \in B_U(e)$
\begin{align*}
\euler^{-\star \ell} \max_{i=1,\ldots,h} &\norm{\lambda_{\ell+k_0}(u_1).\wpz_i \wedge \wpz_1\wedge \ldots \wedge \wpz_h}\\
&\ll \metric(\lambda_{\ell+k_0}(u_1).\hat{\vpz}_{\hfrak}, \hat{\vpz}_{\hfrak})
\ll \euler^{\star \ell} \max_{i=1,\ldots,h} \norm{\lambda_{\ell+k_0}(u_1).\wpz_i \wedge \wpz_1\wedge \ldots \wedge \wpz_h}.
\end{align*}
Here, the lower bound follows as in \eqref{eq:f(w)} and the upper bounds follows from Gram-Schmidt applied to the vectors $\lambda_{\ell+k_0}(u_1).\wpz_i$, for $1 \leq i \leq h$.
Thus, the Remez inequality (which we recall in Lemma~\ref{lem:Remez} below) implies that
\begin{align} \label{eq:Gu0toB}
    \sup_{u_1\in B_U(e)} \metric(\lambda_{\ell+k_0}(u_1).\hat{\vpz}_{\hfrak}, \hat{\vpz}_{\hfrak}) \ll e^{\star \ell} R^{-1/2},
\end{align}
where $\star$ is a large constant no more than a fixed power of $N$.  By the definition of $\ell$, $e^\ell\leq R^\theta$. So, by choosing $\theta$ sufficiently small, depending only on $N$, the conclusion of the lemma follows. 
\end{proof}

The following is a special case of \cite[Lemma 5.4]{LMMS}.

\begin{lemma}[Remez inequality]\label{lem:Remez}
Let $f_1,\ldots,f_r\in \R[x_1,\ldots,x_n]$ be nonzero polynomials of degree at most $d$ and set $f(x) = \max_j |f_j(x)|$. 
For any compact, convex subset $B \subset \R^n$ and any $\delta>0$ we have
\begin{align*}
\big|\big\{x \in B: f(x) < \delta \sup_{y \in B} f(y)\}
\big\}\big| \leq c \delta^{\frac{1}{d}}|B|
\end{align*}
where $c>0$ depends only on $r,n,d$.
\end{lemma}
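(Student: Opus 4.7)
The plan is to reduce the multi-polynomial statement to a single polynomial and then apply the classical multivariate Remez (Brudnyi--Ganzburg) inequality for a single polynomial on a convex body.

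First, by compactness of $B$ and continuity of the $f_j$, the supremum $M := \sup_{y \in B} f(y)$ is attained at some $x_0 \in B$ with $M = |f_{j_0}(x_0)|$ for some $j_0$; in particular $\sup_B|f_{j_0}| = M$. Since $|f_{j_0}(x)| \leq f(x)$ pointwise, we have
\begin{align*}
\{x \in B: f(x) < \delta M\} \subset \{x \in B: |f_{j_0}(x)| < \delta \sup_B |f_{j_0}|\}.
\end{align*}
Consequently it suffices to prove the lemma for the single polynomial $p = f_{j_0}$ of degree at most $d$, with the constant $c$ then depending only on $n$ and $d$ (the dependence on $r$ is only used to pick out $j_0$, and dissolves after this reduction).

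Next, I would invoke the multivariate Remez/Brudnyi--Ganzburg inequality in the form: for any polynomial $p$ of degree at most $d$ on a convex body $B \subset \R^n$ and any measurable $E \subset B$,
\begin{align*}
\sup_B |p| \leq \Big(\frac{C_n \, |B|}{|E|}\Big)^d \sup_E |p|,
\end{align*}
where $C_n$ depends only on $n$. Applied with $E = \{x \in B: |p(x)| < \delta \sup_B|p|\}$, one has $\sup_E |p| \leq \delta \sup_B|p|$, so the inequality becomes $\sup_B|p| \leq (C_n |B|/|E|)^d \delta \sup_B|p|$; after rearranging this gives $|E| \leq C_n\, \delta^{1/d} |B|$, which is precisely the desired bound.

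The Brudnyi--Ganzburg inequality itself reduces by John's theorem to the case where $B$ is an affine image of the Euclidean unit ball (at the cost of a dimensional constant), and then to the classical one-dimensional Remez inequality on an interval by restricting $p$ to chords of $B$ passing through a point where $|p|$ is near maximal, and integrating the 1D estimate over the sphere of directions. The main (really the only nontrivial) obstacle is the classical one-dimensional Remez inequality $\sup_I |p| \leq T_d\bigl(\tfrac{|I|+|J|}{|I|-|J|}\bigr) \sup_J |p|$, which is a consequence of the extremal property of the Chebyshev polynomials on $[-1,1]$ together with an affine rescaling. The careful book-keeping of constants and the reduction from general convex bodies are carried out in \cite[Lem.~5.4]{LMMS}.
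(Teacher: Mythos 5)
Your argument is correct. The paper itself gives no proof of this lemma --- it is quoted verbatim as a special case of \cite[Lemma 5.4]{LMMS} --- and your derivation is the standard one: the reduction to a single polynomial (picking the index $j_0$ attaining the supremum, so that the sublevel set of $f$ is contained in that of $|f_{j_0}|$) is clean, and the rearrangement of the Brudnyi--Ganzburg inequality $\sup_B|p| \leq (C_n|B|/|E|)^d \sup_E|p|$ into the stated measure bound is exactly right. The only caveat is the degenerate cases ($|B|=0$, or $\sup_B f = 0$), which are trivially fine, so nothing is missing.
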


\begin{proof}[Proof of Lemma~\ref{lem:gen group proper}] Let $u\in \mathcal{E}_i'$, then $\lambda_k(w)^{-1}\lambda_k(u)=\lambda_{\ell_i}(u_1)$ for some $u_1\in B_U(e)$, see \eqref{eq:Eprime_i}. By \eqref{eq:deflatticeelements2} 
\begin{align} \label{eq:small-difference}
    d(g_0^{-1} \gamma_u g_0 \lambda_{\ell_i}(u_1).\hat{\vpz}_\hfrak,\hat{\vpz}_\hfrak)\leq R^{-1}.
\end{align}
And by Lemma~\ref{lem:Liealg almost U normalized}, 
\begin{align} \label{eq:almost-fixed}
    d(\lambda_{\ell_i}(u_1).\hat{\vpz}_\hfrak,\hat{\vpz}_\hfrak)\leq R^{-1/4}.
\end{align}
We have that $\norm{g_0}\ll R^{\star \kappa}\ll R^{\ast\theta}$ by \eqref{eq:g0} and $\norm{\gamma_u}\leq e^{\ref{a:boundlatticeel}\ell_i}\leq R^{\ast\theta}$ by \eqref{eq:heightboundlatticel}. Therefore, from \eqref{eq:almost-fixed} we get
\begin{align*}
    d(g_0^{-1} \gamma_u g_0 \lambda_{\ell_i}(u_1).\hat{\vpz}_\hfrak,g_0^{-1} \gamma_u g_0. \hat{\vpz}_\hfrak)\ll R^{\star \theta} R^{-1/4}. 
\end{align*}
Combining this with \eqref{eq:small-difference} we get
\begin{align*}
    d(g_0^{-1} \gamma_u g_0.\hat{\vpz}_\hfrak,\hat{\vpz}_\hfrak)\ll R^{\star \theta} R^{-1/4}+R^{-1}.
\end{align*}
Hence, 
\begin{align} \label{eq:gamma-almost-fixed}
    d(\gamma_u g_0.\hat{\vpz}_\hfrak,g_0.\hat{\vpz}_\hfrak)\ll R^{\ast \kappa}(R^{\star \theta} R^{-1/4}+R^{-1})\leq R^{-1/8}
\end{align}
for sufficiently small $\theta$, depending only on $N$.

We consider Proposition~\ref{prop:nearbyideal2} for the subalgebra  $\Ad(g_0)\hfrak$ and the finite set $\{\gamma_u:u\in\mathcal{E}_i'\}\subset\Gamma$. Then its conditions (a) and (b) are satisfied for $\euler^{\ref{a:boundlatticeel}\ell_i}$ in place of $R$ and $R^{-1/8}$ in place of $\delta$. We choose $\theta$ sufficiently small depending only on $N$, and we choose $C$ sufficiently large depending on $N$, $\G$ and $\Gamma$, so that since $R>C\tau^{-A}$, we get
\[
\ref{c:nearbyideal2} \height(\G)^{-\ref{a:nearbyideal2}} e^{-\ref{a:nearbyideal2}\ref{a:boundlatticeel}\ell_i}\geq  \ref{c:nearbyideal2} \height(\G)^{-\ref{a:nearbyideal2}} R^{-\ref{a:nearbyideal2}\ref{a:boundlatticeel}\theta}>R^{-1/8}=\delta.
\]
Then, the conclusion of Proposition~\ref{prop:nearbyideal2} fails to hold, and hence its condition (c) cannot hold. Therefore, $\Mbf_i$, which is the Zariski closure of $\{\gamma_u:u\in\mathcal{E}_i'\}$, cannot contain $\G$.
\end{proof}

\begin{lemma}\label{lem:classHpart nontrivial}
For all $0\leq i\leq\dim\G$, the group $\Mbf_i^{\mathcal{H}}$ is non-trivial.
\end{lemma}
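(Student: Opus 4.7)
The plan is to argue by contradiction: suppose $\Mbf_i^{\mathcal{H}}$ is trivial. The structural analysis underlying the proof of Proposition~\ref{prop:heightifgenbylattice} shows that $\Mbf_i^{\mathcal{H}}$ contains both the unipotent part of each generator $\gamma_u$ and every commutator $[\gamma_u,\gamma_{u'}]$. Consequently, each $\gamma_u$ is semisimple, the generators pairwise commute, and $\Mbf_i$ is abelian. Combining this with Lemma~\ref{lem:bound on index}, the identity component $\Mbf_i^\circ$ is a $\Q$-torus $\Tbf$ of dimension at most $\dim(\G)$, and the index $[\Mbf_i:\Tbf]$ is bounded by a constant depending only on $N$.

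The key consequence of the torus structure is a polynomial-in-$\ell_i$ bound on the number of distinct values of $\gamma_u$: by Dirichlet's unit theorem applied to $\Tbf(\Z)$,
\[
\#\{\gamma \in \Tbf(\Z): \norm{\gamma} \leq \euler^{\ref{a:boundlatticeel}\ell_i}\} \ll \ell_i^{\dim(\G)}.
\]
Combined with \eqref{eq:heightboundlatticel} and the bound on $[\Mbf_i:\Tbf]$, the set $\{\gamma_u : u \in \mathcal{E}_i'\}$ has cardinality $\ll \ell_i^{\dim(\G)}$. Pigeonholing then produces a measurable subset $\mathcal{F}_i \subset \mathcal{E}_i'$ with $|\mathcal{F}_i| \gg |\mathcal{E}_i'|/\ell_i^{\dim(\G)}$ on which $\gamma_u$ takes a constant value $\gamma_\ast$.

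On $\mathcal{F}_i$, setting $\gamma_u = \gamma_{u'} = \gamma_\ast$ in \eqref{eq:gu} produces the key cancellation $g_u^{-1} g_{u'} = \lambda_k(u)^{-1}\lambda_k(u')$ for any $u,u' \in \mathcal{F}_i$. Since $\norm{g_u},\norm{g_{u'}} \leq R^{\kappa}$, the standard bound $\norm{h^{-1}} \ll \norm{h}^{N-1}$ for $h \in \SL_N(\R)$ yields $\norm{\lambda_k(u)^{-1}\lambda_k(u')} \ll R^{N\kappa}$. Fixing $u_\ast \in \mathcal{F}_i$, we obtain $\lambda_k(\mathcal{F}_i) \subset \lambda_k(u_\ast)\cdot \mathcal{B}$ where $\mathcal{B} \subset U$ is the subset of elements of norm at most $R^{N\kappa}$; using the polynomial relation between $\norm{\cdot}$ and $\norm{\log(\cdot)}$ on the nilpotent $\ufrak$, the Haar measure satisfies $|\mathcal{B}| \ll R^{\star \kappa}$.

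The contradiction comes from comparing this upper bound to a lower bound on $|\lambda_k(\mathcal{F}_i)|$: by Lemma~\ref{lem:goodsubset}, \ref{item:assumptionE1}, and \eqref{eq:lambda-scale},
\[
|\lambda_k(\mathcal{F}_i)| = |\det(\lambda)|^k|\mathcal{F}_i| \gg R^{-\kappa}\,|\det(\lambda)|^{\ell_i}/\ell_i^{\dim(\G)}.
\]
Combining with $|\lambda_k(\mathcal{F}_i)| \leq |\mathcal{B}| \ll R^{\star\kappa}$ and $\ell_i \geq \lfloor \theta^{\dim(\G)+1}\log(R)\rfloor$ yields $\theta^{\dim(\G)+1}\log|\det(\lambda)| \ll \kappa$ for large $R$, contradicting the standing hypothesis $\theta^{\dim(\G)+1} > \kappa$ from \S\ref{sec:manynon-Dio}, since $\log|\det(\lambda)|$ is a positive constant. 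The main place requiring care is tracking the implicit constants and the power $\star$ in $|\mathcal{B}| \ll R^{\star\kappa}$, so as to obtain a constraint on $\kappa/\theta^{\dim(\G)+1}$ that is compatible with what we may impose in \S\ref{sec:manynon-Dio}; but this is routine given that $\kappa$ may be taken arbitrarily small in terms of $\theta$.
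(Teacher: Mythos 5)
Your argument is correct and is essentially the paper's proof run in the contrapositive: both rest on the facts that triviality of $\Mbf_i^{\mathcal{H}}$ forces $\Mbf_i^\circ$ to be a $\Q$-torus, that tori have only polylogarithmically many integral points of bounded norm, that $[\Mbf_i:\Mbf_i^\circ]\ll_N 1$, and that the fibers of $u\mapsto\gamma_u$ have Haar measure $\ll R^{\star\kappa}/|\lambda_k(B_U(e))|$ while $|\mathcal{E}_i'|$ is exponentially larger by Lemma~\ref{lem:goodsubset}. The paper organizes this as a direct count ($\gg\euler^{\star\ell_i}$ distinct $\gamma_u$, too many for a torus) rather than your pigeonhole-plus-measure contradiction, but the content and the required smallness of $\kappa$ relative to $\theta^{\dim(\G)+1}$ are the same.
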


\begin{proof}
We will show that the number of lattice points in $\Mbf_i^\circ(\R)$ of norm at most $K$ is at least polynomial in $K$ for some $K = \euler^{\star \ell_i}$.
Note that for any $\Q$-torus $\Tbf< \SL_{N}$ and any $K\geq 2$ we have
\begin{align}\label{eq:latticeptcountintori}
\#\{\gamma \in \Tbf(\Z): \norm{\gamma}\leq K\} \ll_{N} \log(K)^{\star};
\end{align}
see for instance \cite[Lemma 6.3]{EL23-nonmaximal}.
Thus, the claimed lattice point estimate implies that $\Mbf_i^\circ$ is not a torus, and so $\Mbf_i^{\mathcal{H}}$ is nontrivial.

Notice first that $\gamma_u = \gamma_{u'}$ for $u,u' \in \mathcal{E}_i'$ implies $\norm{\lambda_k(u)^{-1}\lambda_k(u')} \ll R^{\star \kappa}$ by \eqref{eq:deflatticeelements} and, in particular,
$\lambda_k(u') \in \lambda_k(u) \lambda_{\star \kappa \log(R) }(B_U(e))$.
For any $c>0$,
\begin{align*}
|\{u' \in B_U(e):\lambda_k(u') \in \lambda_k(u) \lambda_{c \kappa \log(R) }(B_U(e))\}| \ll \frac{R^{\star c \kappa}}{|\lambda_k(B_U(e))|}.
\end{align*}
Now since $w\in\mathcal E_0$, it follows from \eqref{eq:gooddensitypts} that
\begin{align}  \label{eq:volEiprime}
|\mathcal E_i'|\gg |\mathcal E|\frac{|\lambda_{\ell_i}(B_U(e))|}{|\lambda_k(B_U(e))|}\gg R^{-\kappa}\frac{\euler^{\star \ell_i}}{|\lambda_k(B_U(e))|}.
\end{align}
Together, the above estimates show that
\begin{align*}
    \#\{\gamma_u:u \in \mathcal{E}_i'\} \gg \euler^{\star \ell_i} R^{-\star \kappa} \gg \euler^{\star \ell_i},
\end{align*}
when $\theta$ is sufficiently big in comparison to $\kappa$, see~\eqref{eq:define elli}

By Lemma~\ref{lem:bound on index} we have $[\Mbf_i:\Mbf_i^\circ] \ll_N 1$ since by construction the subgroup of rational points is dense in $\Mbf_i$.
Thus, there exists a finite subset $\mathcal{E}' \subset \mathcal{E}_i'$ with $\#\mathcal{E}' \gg \euler^{\star \ell_i}$ so that the lattice elements $\gamma_u$, $u \in \mathcal{E}'$, all belong to the same $\Mbf_i^\circ$ coset and are distinct. 
Thus, we have in view of \eqref{eq:heightboundlatticel} for $u_1 \in \mathcal{E}'$ fixed
\begin{align*}
\# \{\gamma \in \Mbf_i^\circ(\Z): \norm{\gamma} \leq \euler^{2\ref{a:boundlatticeel}\ell_i}\}
\geq \#\{\gamma_{u_1}^{-1}\gamma_u: u \in \mathcal{E}'\} \gg \euler^{\star \ell_i}
\end{align*}
proving the claimed polynomial amount of lattice elements.
\end{proof}

We have $\Mbf_0^\circ \subset \Mbf_1^\circ \subset \ldots \subset \Mbf_{\dim(\G)}^\circ$ and so there exists $i_0\in \{0,\ldots,\dim(\G)-1\}$ such that $\Mbf_{i_0}^\circ = \Mbf^{\circ}_{i_0+1}$.
We set for a choice of $i_0$
\begin{align}\label{eq:constrHgroup}
\Mbf = \Mbf_{i_0}^\mathcal{H}.
\end{align}
Using Proposition~\ref{prop:heightifgenbylattice} and \eqref{eq:heightboundlatticel} we have 
\begin{align}\label{eq:heightofHinproof}
\height(\Mbf) \ll \euler^{\star \ell_{i_0}}.
\end{align}
By Lemmas~\ref{lem:gen group proper} and \ref{lem:classHpart nontrivial}, $\Mbf$ is a non-trivial proper subgroup of $\G$.

\subsubsection{\textbf{\textit{Completion of the proof of Proposition~\ref{prop:mainstep}}}} 
Assume first that $\Mbf$ defined in \eqref{eq:constrHgroup} is not normal.
Note that for any $u \in \mathcal{E}_{i_0+1}'$ we have $\gamma_u.\vpz_M = \pm \vpz_M$. 
Indeed, since $\Mbf$ is normalized by $\Mbf_{i_0+1}$ by construction, $\gamma_u.\vpz_M$ is an integer multiple of $\vpz_M$ and that multiple is a unit since $\gamma_u.\vpz_M$ is also primitive.
So for any $u\in \mathcal{E}_{i_0+1}'$ using \eqref{eq:g0}, \eqref{eq:deflatticeelements}, \eqref{eq:heightofHinproof}, and $\kappa < \theta^{\dim(\G)+1}$
\begin{align*}
\norm{\eta_M(g_0 \lambda_k(w)^{-1}\lambda_k(u))}
= \norm{\eta_M(\gamma_u g_0 \lambda_k(w)^{-1}\lambda_k(u))} \ll R^{\star \kappa} \euler^{\star \ell_{i_0}} \ll \euler^{\star \ell_{i_0}}.
\end{align*}
Let 
\begin{align*}
    \mathcal{F}_1=\{u' \in B_U(e):\lambda_{\ell_{i_0+1}}(u')=\lambda_k(w)^{-1}\lambda_k(u) \text{ for some } u\in \mathcal{E}_{i_0+1}'\}.
\end{align*}
Then
\begin{align*}
\norm{\eta_M(g_0 \lambda_{\ell_{i_0+1}}(u'))} \ll \euler^{\star \ell_{i_0}}
\end{align*}
for all $u'\in \mathcal{F}_1$. 
We observe that by the definition \eqref{eq:Eprime_i} of $\mathcal{E}_{i_0+1}$, 
\begin{align*}
\mathcal{F}_1=\{u' \in B_U(e):\lambda_k(w)\lambda_{\ell_{i_0+1}}(u')=\lambda_k(u) \text{ for some } u\in \mathcal{E}\}.
\end{align*}
By \eqref{eq:gooddensitypts2}, we have $|\mathcal{F}_1|\gg |\mathcal{E}|$.
The Remez inequality in Lemma~\ref{lem:Remez} thus implies
\begin{align}\label{eq:fromEtolargeint}
\norm{\eta_M(g_0 \lambda_{\ell_{i_0+1}}(u))} \ll \euler^{\star \ell_{i_0}}
\end{align}
for all $u \in B_U(e)$. By \cite[Prop.~5.8]{LMMS}, we have for $\Lbf = \Nbf_{\Mbf}^{\mathcal{H}}$ (where $\Nbf_\Mbf$ is the normalizer of $\Mbf$ and it is a proper subgroup of $\Gbf$) and all $u \in B_U(e)$
\begin{align} \label{eq:Lg_0l_i0}
\begin{split}
\norm{\eta_L(g_0 \lambda_{\ell_{i_0+1}}(u))} &\ll |g_0|^\star \euler^{\star \ell_{i_0}}\ll \euler^{\star \ell_{i_0}},\\
\max_{\zpz \in \mathcal{B}_U}\norm{\zpz \wedge \eta_L(g_0 \lambda_{\ell_{i_0+1}}(u))} &\ll |g_0|^\star\euler^{\star \ell_{i_0}}\euler^{-\star \ell_{i_0+1}}\ll \euler^{-\star \ell_{i_0+1}},
\end{split}
\end{align}
because $|g_0| \ll R^{\star \kappa}$ by \eqref{eq:g0}, $\euler^{\ell_{i_0}}\geq R$ and $\ell_{i_0+1}\geq \lfloor\theta^{-1}\rfloor\ell_{i_0}$ by \eqref{eq:define elli}, and we choose sufficiently small $\kappa$ and $\theta$. 
Recall that $g_0 = \gamma_0 g \lambda_k(w)$ (cf.\ \eqref{eq:g0}). Therefore,  $\eta_L(g_0) = \eta_{\gamma_0^{-1}L\gamma_0}(g \lambda_{k}(w))$.  
Hence, by putting $u=e$ in \eqref{eq:Lg_0l_i0}, there exists an absolute constant $A>0$ such that
\begin{align*}
\begin{split}
\norm{\eta_{\gamma_0^{-1}L\gamma_0}(g \lambda_{k}(w))} &\ll \euler^{A \ell_{i_0}},\\
\max_{\zpz \in \mathcal{B}_U}\norm{\zpz \wedge \eta_{\gamma_0^{-1}L\gamma_0}(g \lambda_{k}(w))} &\ll \euler^{-\ell_{i_0+1}/A}.
\end{split}
\end{align*}
Taking $\ref{a:mainstep} \geq 2A$ and $\theta$ sufficiently small with $D \leq \frac{1}{4A^2\theta}$, this proves the conclusion \eqref{eq:mainstep} of the proposition for the point $w \in \mathcal{E}_0$ and the proper $\Q$-subgroup $\gamma_0^{-1} \Lbf \gamma_0<\Gbf$ of class $\mathcal{H}$, when the subgroup $\Mbf$ associated to $w$ is not normal.

\medskip

Suppose now that the subgroup $\Mbf$ associated to $w$ \textbf{is} a normal subgroup of $\G$.
In this case, the above argument fails since, of course, the necessary passage to the normalizer does not produce a proper subgroup. 
Hence, to extract information from~\eqref{eq:deflatticeelements}, we will instead use the Chevalley representation for $\bf M$ constructed in \S\ref{sec:chevalleyclassH}. 

Let $(\rho,\vpz)$ be a Chevalley pair for $\Mbf$ as in Proposition~\ref{prop:chevalley}.
So, by \eqref{eq:heightofHinproof},
\begin{align*}
    \norm{\vpz} \ll \height(\Mbf)^\star \ll \euler^{\star \ell_{i_0}}.
\end{align*}

Let $V\subset \Q^{\dim(\rho)}$ be the subspace of $\Mbf(\Q)$-fixed vectors.
The identity component of the image of $\Mbf_{i_0+1}$ under the restricted representation $\rho'\colon \G \to \GL(V)$ is a {$\Q$-torus}.
Let $\mathcal{S}=\{\rho'(\gamma_u): u \in \mathcal{E}_{i_0+1}'\}$. Then
by Lemma~\ref{lem:bound on index},  \eqref{eq:heightboundlatticel}, and \eqref{eq:latticeptcountintori}  we have  
\begin{align*}
    \#(\mathcal{S})\ll \ell_{i_0}^\star \ll \log(R)^\star.
\end{align*}
For each $s\in\mathcal{S}$, let 
\begin{align}
    \mathcal{E}'(s)&=\{u\in \mathcal{E}'_{\ell_{i_0+1}}:\rho'(\gamma_u)=s\} \text{ and } \notag\\
    \mathcal{F}(s)&=\{u\in B_U(e):\lambda_k(w)\lambda_{\ell_{i_0+1}}(u)\in \lambda_k(\mathcal{E}'(s))\}. 
    \label{eq:Fs}
\end{align}
Then  $\cup_{s\in\mathcal{S}}\mathcal{E}'(s)=\mathcal{E}_{\ell_{i_0+1}}'$, and hence 
\begin{align*}
    \cup_{s\in \mathcal{S}}\mathcal{F}(s)&=\{u\in B_U(e):\lambda_k(w)\lambda_{\ell_{i_0+1}}(u)\in \lambda_k(\mathcal{E}_{\ell_{i_0+1}}')\}\\
    &=\{u\in B_U(e):\lambda_k(w)\lambda_{\ell_{i_0+1}}(u)\in \lambda_k(\mathcal{E})\},
\end{align*}
by the definition of $\mathcal{E}_{\ell_{i_0+1}}'$. So, by \eqref{eq:gooddensitypts2} we have 
\begin{align*}
    |\cup_{s\in \mathcal{S}}\mathcal{F}(s)|\gg |\mathcal{E}|. 
\end{align*}
By the pigeonhole principle, we can pick $s_1\in \mathcal{S}$ such that
\begin{align*}
  |\mathcal{F}(s_1)|\gg |\mathcal{E}|/\#(\mathcal{S})\gg |\mathcal{E}|\log(R)^{-\star}. 
\end{align*}

We fix $u_1 \in \mathcal{E}'(s_1)$. For simplicity, we denote the right orbit map at $\vpz$ by $\vartheta: g \in \G \mapsto \rho(g)^{-1}\vpz$. For any $u \in \mathcal{E}'(s_1)$, since $\vartheta(\gamma_{u})=\vartheta(\gamma_{u_1})$, we have
\begin{align} \label{eq:gugu0}
\vartheta(g_0 g_u) = \vartheta(\gamma_{u_1}g_0 \lambda_k(w)^{-1}\lambda_k(u))
= \vartheta(g_0 g_{u_1} \lambda_k(u_1)^{-1}\lambda_k(u))
\end{align}
where $g_u = g_0^{-1} \gamma_u g_0 \lambda_k(w)^{-1}\lambda_k(u)$ as in \eqref{eq:gu}. Also, by \eqref{eq:g0} and \eqref{eq:deflatticeelements}, since $\norm{\vpz}\ll e^{\star \ell_{i_0}}$, we have
\begin{align} \label{eq:bondong0gu}
    \norm{\vartheta(g_0 g_u)}\ll \euler^{\star \ell_{i_0}}.
\end{align} 
Let $u_1'\in B_U(e)$ be such that $\lambda_k(u_1) = \lambda_k(w)\lambda_{\ell_{i_0+1}}(u_1')$. Then,
\begin{align}
    \lambda_k(\mathcal{E}'(s_1))&\subset\lambda_k(\mathcal{E}_{\ell_{i_0}+1}') \notag\\
    &\subset \lambda_k(w)\lambda_{\ell_{i_0}+1}(B_U(e))  \text{, by definition of $\mathcal{E}_{\ell_{i_0+1}}'$,}\notag\\
    &=\lambda_k(u_1)\lambda_{\ell_{i_0+1}}(u_1')^{-1}\lambda_{\ell_{i_0}+1}(B_U(e))\notag\\
    &\subset \lambda_k(u_1)\lambda_{\ell_{i_0+1}}(\lambda_{k_0}(B_U(e))), \label{eq:k0Bue}
\end{align}
by \eqref{eq:lambda-scale}. So, we define
\begin{align} 
    \mathcal{F}_2=\{u'\in \lambda_{k_0}(B_U(e)):\lambda_k(u_1)\lambda_{\ell_{i_0+1}}(u')\in \lambda_k(\mathcal{E}'(s_1))\}.
\end{align}
Then, by \eqref{eq:gugu0} and \eqref{eq:bondong0gu}, for all $u'\in \mathcal{F}_2$, we have
\begin{align} \label{eq:F2bound}
    \vartheta(g_0 g_{u_1} \lambda_{\ell_{i_0}+1}(u'))\ll \euler^{\star \ell_{i_0}}.
\end{align}
Also,
\begin{align*}
    \mathcal{F}_2&=\{u'\in \lambda_{k_0}(B_U(e)): \lambda_k(w)\lambda_{\ell_{i_0}+1}(u_1')\lambda_{\ell_{i_0+1}}(u')\in \lambda_k(\mathcal{E}'(s_1))\}\\
    &\supset\{u'\in \lambda_{k_0}(B_U(e)):\lambda_{\ell_{i_0}+1}(u_1')\lambda_{\ell_{i_0}+1}(u')\in\lambda_{\ell_{i_0+1}}(\mathcal{F}(s))\} \text{, by \eqref{eq:Fs},}\\ 
    &=\lambda_{\ell_{i_0+1}}^{-1}\left(\lambda_{\ell_{i_0}+1}(u_1')^{-1}\lambda_{\ell_{i_0+1}}(\mathcal{F}(s))\right).
    \end{align*}
Since $\lambda_{\ell_{i_0+1}}$ acts on the Haar measure of $U$ as a scalar, 
\begin{align} \label{eq:F2vol}
     |\mathcal{F}_2|\geq |\mathcal{F}(s)| \gg|\mathcal{E}|(\log R)^{-\star}.
\end{align}
Therefore, from \eqref{eq:F2bound} using the Remez inequality (Lemma~\ref{lem:Remez}), we conclude that 
\begin{align}\label{eq:orbit bounded}
    \norm{\vartheta(g_0 g_{u_1}\lambda_{\ell_{i_0+1}}(u))} \ll \euler^{\star \ell_{i_0}}
\end{align}
for all $u \in B_U(e)\subset\lambda_{k_0}(B_U(e))$.

Now we will argue as in \cite[Prop.~5.8]{LMMS} to conclude. Fix $\zpz \in \mathcal{B}_U$.
The estimate in \eqref{eq:orbit bounded} implies that 
\begin{align*}
\norm{\vartheta(g_0 g_{u_1}\lambda_{\ell_{i_0+1}}(\exp(tT\hat{\zpz})))} \ll \euler^{\star \ell_{i_0}} \text{ for all $|t| \leq 1$,}
\end{align*}
where $\hat{\zpz} = \lambda^{-\ell_{i_0+1}}(\zpz)$ and $T = \norm{\hat{\zpz}}^{-1}$ (recall that by definition $\lambda_\ell(\exp(\zpz))=\exp(\lambda^\ell(\zpz))$ for all $\ell$).
Thus, using \eqref{eq:deflatticeelements} and $|g_0|\ll R^{\star \kappa}$, we get
\begin{align} \label{eq:poly-bound}
\norm{\vartheta(\exp(t \Ad(g_0g_{u_1})\zpz))} \ll \euler^{\star \ell_{i_0}}\text{ for all $|t| \leq T$}.
\end{align}
By Proposition~\ref{prop:chevalley}, each coordinate function of the map 
\begin{align*}
    t\mapsto \vartheta(\exp(t \Ad(g_0g_{u_1})\zpz))
\end{align*}
is a polynomial of degree bounded by a number that depends only on $N$. Hence, from \eqref{eq:poly-bound}, using Lagrange interpolation, we conclude that all of the non-constant coefficients of the polynomial map must be $\ll \euler^{\star \ell_{i_0}} T^{-1}$ in size. 

Using $T \gg (\norm{\lambda^{-1}}^{-1})^{\ell_{i_0+1}}$, we obtain that
\begin{align*}
\norm{\mathrm{D}\rho(\Ad(g_0g_{u_1})\zpz)\vpz} \ll \euler^{\star \ell_{i_0}} T^{-\star} \ll \euler^{-\star \ell_{i_0+1}},
\end{align*}
because $\ell_{i_0+1}\geq \lfloor \theta^{-1}\rfloor \ell_{i_0}$, and we choose the constant $\theta$ sufficiently small. 

By definition of the Chevalley representation, 
\begin{align*}
\Lie(\Mbf) = \{\wpz \in \mathfrak{sl}_N: \mathrm{D}\rho(\wpz)\vpz = 0\}.
\end{align*}
The map $\wpz \mapsto \mathrm{D}\rho(\wpz)\vpz$ is linear and can be realized as an integral matrix with coefficients of size $\ll \height(\Mbf)^\star \ll \euler^{\star \ell_{i_0}}$.
In view of the above estimate, there exists $\zpz'$ in the kernel $\Lie(\Mbf)$ with distance $\ll \euler^{-\star \ell_{i_0+1}}$ to $\Ad(g_0g_{u_1})\zpz$ (see e.g.~\cite[\S13.4]{EMV}) and hence
\begin{align*}
\norm{\Ad(g_0g_{u_1})\zpz \wedge \vpz_{M}} \ll \euler^{-\star \ell_{i_0+1}}.
\end{align*}
As $\Mbf$ is normal, this implies $\norm{\zpz \wedge \vpz_{M}} \ll \euler^{-\star \ell_{i_0+1}}$, because $|g_0g_{u_1}| \ll \euler^{\star \ell_{i_0}}$, $\ell_{i_0+1}\geq \lfloor \theta^{-1}\rfloor \ell_{i_0}$, and we choose $\theta$ sufficiently small. 
For appropriate $\ref{a:mainstep}$ and $\theta$, as in the non-normal case considered earlier, and noting that $\eta_M(\cdot) = \vpz_M$, this completes the proof of Proposition~\ref{prop:mainstep} in combination with \eqref{eq:heightofHinproof}.
\qed

\subsection{Proof of Theorem \ref{thm:main}}

We begin by recalling the notion of Diophantine points as well as the main result from \cite{LMMS}.

\begin{definition}[\cite{LMMS}]
Let $\epsilon: \R_{> 0} \to (0,1)$ be a monotonely decreasing function.
A point $x = \Gamma g$ is called $(\epsilon,t)$-Diophantine (with respect to $U$) if for any non-trivial proper subgroup $\Mbf < \G$ of class $\mathcal{H}$ with $\norm{\eta_M(g)} \leq \mathrm{e}^t$ we have
\begin{align*}
\max_{\zpz \in \mathcal{B}_U}\norm{\zpz \wedge \eta_M(g)} \geq \epsilon(\norm{\eta_M(g)}).
\end{align*}
\end{definition}

\noindent
Using this notion, we can state the main result of \cite{LMMS}: 

\begin{theorem}[{\cite[Thm.~3.3]{LMMS}}]\label{thm:efflin}
There exist constants $\consta\label{a:efflin},\consta\label{a:efflin2}>0$ depending only on $N$, $E>0$ depending on N and polynomially on $\height(\G)$, and $E_1$ depending in addition also (polynomially) on $E_{\G}$, so that the following holds.
Let $g \in G$, $t>0$, $k \geq 1$, $\eta \in (0,1/2)$. 
Assume $\epsilon: \R_{>0} \to (0,1)$ satisfies for any $s \in \R_{>0}$ that
\begin{align}\label{eq:assump eps}
\epsilon(s) \leq s^{-\ref{a:efflin}} \eta^{\ref{a:efflin}}/E_1.
\end{align}
Then, at least one of the following holds.
\begin{enumerate}
    \item
    \begin{align*}
    \Big|\Big\{u \in B_U(e):
        \begin{array}{cc}
             \Gamma g \lambda_k(u) \not\in X_\eta \text{ or} \\
             \Gamma g \lambda_k(u) \text{ is not }(\epsilon,t)\text{-Diophantine}
        \end{array}\Big\}\Big| < E_1 \eta^{1/\ref{a:efflin2}}.
    \end{align*}
    \item There exists a nontrivial proper subgroup $\Mbf \in \mathcal{H}$ so that for all $u \in B_U(e)$
    \begin{align*}
    \norm{\eta_M(g\lambda_k(u))} &\leq (E |g|^{\ref{a:efflin}} + E_1 \euler^{\ref{a:efflin}t}) \eta^{-\ref{a:efflin}},\\
    \max_{\zpz \in \mathcal{B}_U} \norm{\zpz \wedge \eta_M(g\lambda_k(u))} 
    &\leq \euler^{-k/\ref{a:efflin2}}(E |g|^{\ref{a:efflin}} + E_1 \euler^{\ref{a:efflin}t}) \eta^{-\ref{a:efflin}}.
    \end{align*}
    \item There exists a nontrivial proper normal subgroup $\Mbf \lhd \G$ with
    \begin{align*}
    \height(\Mbf) &\leq E_1 (\euler^t \eta^{-1})^{\ref{a:efflin}},\\
    \max_{\zpz \in \mathcal{B}_U} \norm{\zpz \wedge \vpz_{M}} 
    &\leq \epsilon(\height(\Mbf)^{1/\ref{a:efflin}}\eta/E_1)^{1/\ref{a:efflin}}.
    \end{align*}
\end{enumerate}
\end{theorem}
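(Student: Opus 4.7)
The plan is to follow the effective Dani--Margulis linearization strategy: handle the cusp via the non-divergence input (Theorem~\ref{thm:nondiv}), pigeonhole the non-Diophantine points onto a single subgroup $\Mbf\in\Hcal$, and use the polynomial structure of the orbit map $u\mapsto \eta_M(g\lambda_k(u))$ to promote bounds on a positive-measure subset of $B_U(e)$ to the whole ball. We assume conclusions~(1) and~(2) fail and derive~(3), collecting the relevant degenerate combinations along the way.

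Applying Theorem~\ref{thm:nondiv} to $(g,\eta,k)$ either yields a subgroup $\Mbf\in\Hcal$ directly satisfying the bounds in conclusion~(2) (the $E_1 \euler^{\ref{a:efflin}t}$ term in the statement absorbs this trivially), or shows that $|\{u:\Gamma g\lambda_k(u)\notin X_\eta\}|\leq \ref{c:nondiv}\eta^{1/\ref{a:nondiv}}$. In the latter case, assuming~(1) also fails, we obtain $\mathcal{F}\subset B_U(e)$ with $|\mathcal{F}|\gg E_1\eta^{1/\ref{a:efflin2}}$ such that for each $u\in\mathcal{F}$ the point $\Gamma g\lambda_k(u)\in X_\eta$ is non-$(\epsilon,t)$-Diophantine; pick a witnessing non-trivial proper $\Mbf_u\in\Hcal$ with $\norm{\eta_{M_u}(g\lambda_k(u))}\leq\euler^t$. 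By Proposition~\ref{prop:chevalley} and Lemma~\ref{lem:bound on index}, the number of $\Gamma$-conjugacy classes of such $\Hcal$-subgroups of height $\leq \euler^{\star t}$ is $\leq \euler^{\star t}$; pigeonholing produces a single $\Mbf\in\Hcal$, a subset $\mathcal{F}'\subset\mathcal{F}$ of measure $\gg E_1\eta^{\star}\euler^{-\star t}$, and $\gamma_u\in\Gamma$ with $\Mbf_u=\gamma_u^{-1}\Mbf\gamma_u$; hence on $\mathcal{F}'$
\begin{align*}
\norm{\eta_M(\gamma_u g\lambda_k(u))}&\leq \euler^t,\\
\max_{\zpz\in\mathcal{B}_U}\norm{\zpz\wedge\eta_M(\gamma_u g\lambda_k(u))}&\leq \epsilon\bigl(\norm{\eta_M(\gamma_u g\lambda_k(u))}\bigr).
\end{align*}

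In the exponential chart on $\ufrak$, $u\mapsto \eta_M(g\lambda_k(u))$ is polynomial in $u$ of degree bounded by a constant depending only on $N$ (using that $\rho_M$ comes from the finite family of Proposition~\ref{prop:chevalley}), and the same holds for $u\mapsto \zpz\wedge\eta_M(g\lambda_k(u))$. Apply Lemma~\ref{lem:Remez} coordinate by coordinate to extend the norm bound from $\mathcal{F}'$ to all $u\in B_U(e)$ at the cost of a factor $|\mathcal{F}'|^{-1/\star}$, producing the first estimate in conclusion~(2). For the wedge, Remez is combined with the observation that $\lambda_k$ expands on $\ufrak$, so the polynomial components of $\zpz\wedge\eta_M$ carry an extra contraction of $\euler^{-k/\star}$ when extended from $\mathcal{F}'$ to $B_U(e)$, yielding the second estimate. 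If the produced $\Mbf$ is not normal in $\Gbf$, this gives conclusion~(2), after (if necessary) replacing $\Mbf$ by $\Nbf_\Mbf^{\Hcal}$ as in \cite[Prop.~5.8]{LMMS}. If $\Mbf$ is normal, then $\gamma_u.\vpz_M=\pm\vpz_M$, so $\eta_M(\gamma_u g\lambda_k(u))=\pm\eta_M(g\lambda_k(u))$, and the wedge bounds on $\mathcal{F}'$ translate directly into $u$-independent bounds on $\max_{\zpz}\norm{\zpz\wedge\vpz_M}$; combined with $\height(\Mbf)\ll(\euler^t\eta^{-1})^{\star}$ from the pigeonhole, this is conclusion~(3).

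The principal obstacle is the calibration of the Remez step: the loss factor $|\mathcal{F}'|^{-1/\star}$ (polynomial in $E_1^{-1}\eta^{-\star}\euler^{\star t}$) and the expansion factor $\euler^{-k/\star}$ must together absorb all errors without destroying the $\epsilon$-smallness inherited from the Diophantine failure on $\mathcal{F}'$. The hypothesis~\eqref{eq:assump eps} on $\epsilon$ is calibrated precisely so that after these losses the wedge estimate remains sharper than what the definition of $(\epsilon,t)$-Diophantine would require at the relevant scales, so that the bootstrap is non-circular. A secondary subtlety is the transition from the individual witnesses $\Mbf_u$ to a single $\Mbf$: one must verify that the polynomial structure, and its degree and coefficient size, are preserved after absorbing the $\Gamma$-conjugation --- which is exactly what the effective Chevalley construction in Proposition~\ref{prop:chevalley} provides uniformly.
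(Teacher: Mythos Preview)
This theorem is not proved in the present paper: it is quoted verbatim from \cite[Thm.~3.3]{LMMS} and used as a black box in the proof of Theorem~\ref{thm:main}. There is therefore no ``paper's own proof'' to compare your proposal against.

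As a brief remark on the proposal itself: your outline follows the correct overall architecture of the effective Dani--Margulis linearization as developed in \cite{LMMS} (non-divergence for the cusp, reduction to a single $\Mbf\in\Hcal$, Remez/polynomial arguments to globalize). However, the actual proof in \cite{LMMS} is considerably more elaborate than your sketch. In particular, the step where you pigeonhole onto a single $\Gamma$-conjugacy class and then ``extend the norm bound from $\mathcal{F}'$ to all $u\in B_U(e)$'' via Remez glosses over the main difficulty: different $u$'s may give different $\gamma_u$, so $u\mapsto \eta_M(\gamma_u g\lambda_k(u))$ is not a single polynomial map, and a direct Remez application is not available. The argument in \cite{LMMS} instead proceeds by an induction over a carefully chosen finite family of $\Hcal$-subgroups ordered by inclusion, with a quantitative ``tube'' decomposition at each stage; this is what produces the precise form of the three alternatives and the calibration condition~\eqref{eq:assump eps}. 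If you wish to supply an independent proof, that inductive scheme is the missing ingredient.
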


\begin{proof}[Proof of Theorem~\ref{thm:main}]
Fix a sufficiently large constant $D>1$ to be determined later, and let $\mathcal{E}_0$ be as in Proposition~\ref{prop:mainstep}.
We may reduce $\mathcal{E}_0$ further and assume that \eqref{eq:mainstep} holds for all $u \in \mathcal{E}_0$ and some \emph{fixed} $i \in \{0,\ldots,\dim(\G)\}$.
Set for $s >0$
\begin{align*}
\epsilon(s) = \euler^{-D\ell_{i}\ref{a:mainstep}/2} s^{-\ref{a:efflin}}.
\end{align*}
Note that $\epsilon(\cdot)$ satisfies \eqref{eq:assump eps} whenever $\eta \geq \euler^{-\ell_{i}}$ and $D$ is sufficiently large.

Observe now that for any $u \in \mathcal{E}_0$ the point $x \lambda_k(u)$ is not $(\epsilon,\ref{a:mainstep}\ell_i)$-Diophantine.
Indeed, by Proposition~\ref{prop:mainstep} there exists for any $u \in \mathcal{E}_0$ a nontrivial proper $\Q$-subgroup $\Lbf \in \mathcal{H}$ of $\G$ such that
\[
\norm{\eta_L(g\lambda_k(u))} \leq \euler^{\ref{a:mainstep} \ell_i},
\]
see the first estimate in \eqref{eq:mainstep}, and moreover, by the second estimate in \eqref{eq:mainstep} whenever $D>2\ref{a:efflin}$
\begin{align*}
\max_{\zpz \in \mathcal{B}_U} \norm{\zpz \wedge \eta_L(g\lambda_k(u))} 
&\leq \euler^{-D\ell_{i}\ref{a:mainstep}}\\
&< \euler^{-D\ell_{i}\ref{a:mainstep}/2 } \euler^{-\ref{a:efflin}\ref{a:mainstep}\ell_i}
\leq \epsilon(\norm{\eta_L(g\lambda_k(u))}).
\end{align*}
We may apply Theorem~\ref{thm:efflin} for $\eta >0$ with $E_1 \eta^{1/\ref{a:efflin2}} = R^{-2\kappa}$ assuming $\kappa$ is small enough so that $\theta^{\dim(\G)+1} > 2\ref{a:efflin2}\kappa$.
Since $|\mathcal{E}_0| \geq R^{-2\kappa}$, we obtain that Option (2) or Option (3) in Theorem~\ref{thm:efflin} holds.
If (2) holds, we conclude with Option \ref{item:main-option1} in Theorem~\ref{thm:main}.
So assume that Option (3) of Theorem~\ref{thm:efflin} holds for some nontrivial proper normal subgroup $\Mbf \lhd \G$. In particular,
\begin{align}\label{eq:nildirclosetonormal}
\max_{\zpz \in \mathcal{B}_U} \norm{\zpz \wedge \vpz_{M}} 
\leq \epsilon(\height(\Mbf)^{1/\ref{a:efflin}}\eta/E_1)^{1/\ref{a:efflin}} \ll \euler^{-D \ell_{i}\ref{a:mainstep}/3}.
\end{align}
by definition of $\epsilon(\cdot)$.
By Lemma~\ref{lem:idealscontainingLevi}, $\Lie(\Mbf)$ cannot contain a Levi subalgebra of $\gfrak$ and, in particular, $\Mbf' = \Mbf \cdot\Rbf(\G)$ is a proper normal subgroup of $\G$ where $\Rbf(\G)$ denotes the (unipotent) radical of $\G$.
Clearly, the distance $\zpz$ to $\Lie(M')$ is at most the distance of $\zpz$ to $\Lie(M)$.
These statements together show that $\norm{\zpz \wedge \vpz_{M'}}\ll \norm{\zpz \wedge \vpz_{M}}$ for all $\zpz \in \mathcal{B}_U$ and thus Option~(2) of Theorem~\ref{thm:main} follows from \eqref{eq:nildirclosetonormal}. 

Moreover, we also note that there are only finitely many choices for $\Mbf'$, and hence $\height(\Mbf')\ll_\G 1$ (see \cite[Lemma 8.6]{AW-realsemisimple} for a more precise estimate).
\end{proof}

\bibliographystyle{plain}
\bibliography{papers}

\end{document}